\newtheorem {theorem}{Theorem}[section]
\newtheorem {lemma}[theorem]{Lemma}
\newtheorem {proposition}[theorem]{Proposition}
\newtheorem {definition}[theorem]{Definition}
\newtheorem {corollary}[theorem]{Corollary}
\newtheorem {remark}[theorem]{Remark}
\newtheorem {notation}[theorem]{Notation}
\def \N {{\mathbb N}}
\def \R {{\mathbb R}}
\newcommand{\norm}[1]{\left\|#1 \right\|}
\newcommand{\T}{\mathbb{T}}
\newcommand{\cI}{\mathcal{I}}
\newcommand{\bbE}{\mathbb{E}}
\def\expec{\mathbb{E}}
\def \sign{{\rm sign}}
\newcommand{\gep}{\epsilon}
\newcommand{\gl}{\lambda}
\newcommand{\ga}{\alpha}
\newcommand{\gs}{r}
\newcommand{\gd}{\delta}
\newcommand{\G}{\mathcal{G}}
\newcommand{\la}{\langle}
\newcommand{\ra}{\rangle}
\newcommand{\pr}{\mu_0}
\newcommand{\post}{\mu^y}
\newcommand{\dpost}{\pi^y}
\newcommand{\e}{{\rm e}}
\newcommand{\ud}{d} 
\newcommand{\tr}{\tilde R}
\newcommand{\utr}{u^{\dagger}}
\newcommand{\ts}{{\tilde s}}
\newcommand{\diam}{{\epsilon}}
\definecolor{darkred}{rgb}{.7,0,0}
\definecolor{darkgreen}{rgb}{0,0.5,0}
\definecolor{darkblue}{rgb}{0,0,0.7}
\begin{document}

\begin{frontmatter}

\title{Sparsity-promoting and edge-preserving maximum a posteriori estimators in non-parametric Bayesian inverse problems}

\runtitle{MAP estimators and Besov prior}



\begin{aug}
\author{\fnms{Sergios} \snm{Agapiou}\thanksref{m1}\ead[label=e1]{agapiou.sergios@ucy.ac.cy}}
\author{\fnms{Martin} \snm{Burger}\thanksref{m2}\ead[label=e2]{martin.burger@wwu.de}}
\author{\fnms{Masoumeh} \snm{Dashti}\thanksref{m3}\ead[label=e3]{m.dashti@sussex.ac.uk}}
\and\author{\fnms{Tapio} \snm{Helin}\thanksref{m4}\ead[label=e4]{tapio.helin@helsinki.fi}}

\runauthor{S. Agapiou, M. Burger, M. Dashti and T. Helin}

\affiliation{University of Cyprus \thanksmark{m1},  University of M\"unster\thanksmark{m2}, University of Sussex \thanksmark{m3} and University of Helsinki\thanksmark{m4}}

\address{S. Agapiou\\ Department of Mathematics and Statistics\\
1 University Avenue\\ 2109 Nicosia\\ Cyprus\\
\printead{e1}}

\address{M. Burger\\
Institute for Computational\\ and Applied Mathematics\\
Westf\"alische Wilhelms-Universit\"at M\"unster\\
and Cells in Motion Cluster of Excellence\\
M\"unster\\
Germany\\
\printead{e2}}

\address{M. Dashti\\
Department of Mathematics\\
University of Sussex\\
BN1 5DJ Brighton\\
United Kingdom\\
\printead{e3}}

\address{T. Helin\\
Department of Mathematics and Statistics\\
University of Helsinki\\
Gustaf Hallstr\"omin katu 2b\\
FI-00014 Helsinki\\
Finland\\
\printead{e4}}
\end{aug}

\begin{abstract}
We consider the inverse problem of recovering an unknown functional parameter $u$ in a separable Banach space, from a noisy observation $y$ of its image through a known possibly non-linear ill-posed map ${\mathcal G}$. The data $y$ is finite-dimensional and the noise is Gaussian. We adopt a Bayesian approach to the problem and
consider Besov space priors (see \cite{LSS09}), which are well-known for their edge-preserving and sparsity-promoting properties and have recently attracted wide attention especially in the medical imaging community. 

Our key result is to show that in this non-parametric setup the maximum a posteriori (MAP) estimates are characterized by the minimizers of a generalized Onsager--Machlup functional of the posterior.
This is done independently for the so-called weak and strong MAP estimates, which as we show coincide in our context. 
In addition, we prove a form of weak consistency for the MAP estimators in the infinitely informative data limit.
Our results are remarkable for two reasons: first, the prior 
distribution is non-Gaussian and does not meet the smoothness conditions required in previous research on non-parametric MAP estimates.
Second, the result analytically justifies existing uses of the MAP estimate in finite but high dimensional discretizations of Bayesian inverse problems with the considered Besov priors.

\end{abstract}



\end{frontmatter}

\section{Introduction}

We consider the inverse problem of recovering an unknown functional parameter $u\in X$ from a noisy and indirect observation $y\in Y.$ We work in a framework in which $X$ is an infinite-dimensional separable Banach space, while $Y=\R^J$. 
In particular, we consider the additive noise model
\begin{equation}
\label{eq:maineq}
y = {\mathcal G}(u) +  \xi,
\end{equation} where $\xi$ is mean zero Gaussian observational noise, with a positive definite covariance matrix $\Sigma\in\R^{J\times J}$, $\xi\sim N(0,\Sigma).$
Here the possibly nonlinear operator $\G:X\to Y$ describes the system response, connecting the observation $y$ to the unknown parameter $u$. More specifically, $\G$ captures both the forward model and the observation mechanism and is assumed to be known.

Inverse problems are mathematically characterized by being ill-posed: the lack of sufficient information in the observation prohibits the unique or stable reconstruction of the unknown. This can either be due to an inherent loss of information in the forward model, or due to the incomplete and noisy observation. 
To address ill-posedness, regularization techniques are employed in which the available information in the observation is augmented using \emph{a priori} available knowledge on the properties of candidate solutions. 

We adopt a Bayesian approach to the regularization of inverse problems, which
has in recent years attracted enormous attention in the inverse problems and imaging literature: see the early work \cite{JF70}, the books \cite{AT05, KS06} and the more recent works \cite{LSS09, Stuart, DS15} and references therein.
In this approach, prior information is encoded in the \emph{prior} distribution $\pr$ on the unknown $u$, and the Bayesian methodology is used to formally obtain the \emph{posterior} distribution $\post$ on $u|y$, 
in the form 
\begin{equation}\label{eq:intpost}
\post(du|y)\propto\exp\big(-\Phi(u;y)\big)\pr(du),
\end{equation} 
where \[\Phi(u;y)=\frac12\norm{\Sigma^{-\frac12}\big(y-\G(u)\big)}^2_Y.\] 

Our work is driven by two types of a priori information: on the one hand, we aim to recover unknown functions with a blocky structure, as is typically the case in image processing \cite{DS96} and medical imaging applications \cite{DS94}.
On the other hand, we are interested in prior models which promote sparse solutions, that is high-dimensional solutions that can be represented by a small number of coefficients in an appropriate expansion.
To achieve these effects, we utilize so-called Besov space priors \cite{LSS09, Dashti_besov}, which are well-known for their edge-preserving and sparsity-promoting properties, see e.g. \cite{KLNS12, rantala2006wavelet,leporini2001bayesian,jia2016bayesian,bui2015scalable}. 
With these goals in mind, our work relates to classical $L^1$-type regularization methods such as penalized least squares with Total Variation penalty \cite{ROF92, SC03}.

In practice, the implementation of the Bayesian approach to an inverse problem typically requires a high-dimensional discretization of the unknown and large computational resources.
From a computational perspective, it is hence imperative that the probability models and estimators related to finite-dimensional discretizations of the problem scale well with respect to refining discretization.
In particular, there is a fundamental need to understand whether specialized prior information used frequently in applications, for example of the type described above, leads to well-defined non-parametric probability models and whether the related finite-dimensional estimators have well-behaving limits; this motivates the study of Bayesian inverse problems in the infinite-dimensional function-space setting.
A body of literature on this topic has emerged during the last years, centered around two theories:
\begin{enumerate}
\item[a)] the \emph{discretization invariance} theory \cite{LS04, LSS09}, which aims to ensure that when investing more resources in increasing the discretization level, the prior remains faithful to the intended information on the unknown function and the posterior converges to a well defined limit which is an improved representation of the reality;
\item[b)] the \emph{well-posedness} of the posterior theory found in \cite{Stuart, DS15}, which secures that the posterior is well defined in the infinite-dimensional limit and robust with respect to perturbations in the data as well as approximations of the forward model. 
\end{enumerate}

Our work studies Bayesian inversion in function spaces especially from the perspective of point estimators. Namely, we study and give a rigorous meaning to  \emph{maximum a posteriori} (MAP) estimates for Bayesian inverse problems with certain Besov priors in the infinite-dimensional function-space setting. A MAP estimate is understood here as the mode of the posterior probability measure, hence our results require a careful definition of a mode in the infinite-dimensional setting. The main challenge is then to establish a connection between the topological definition of a MAP estimate (mode of the posterior) and an explicit variational problem. Succeeding in doing so, opens up the possibility of studying the behaviour of MAP estimators in certain situations. In particular, we are able to prove a weak form of consistency of the MAP estimator in the infinitely informative data limit.

\subsection{Non-gaussian prior information and the need for MAP estimators}

A major challenge in Bayesian statistics is the extraction of information from the posterior distribution. 
In Gaussian-conjugate settings, such as linear inverse problems, 
 on the one hand there are explicit formulae for the posterior mean which can be used as an estimator of the unknown, and on the other hand one can (in principle) sample directly from a discretized version of the  posterior, \cite{AM84, LPS89, ALS13, PSZ13, PPRS12}.
Nevertheless, draws from Gaussian priors  do not vary sufficiently sharply on their domain to have a blocky structure, neither are they sparse.

For this purpose, the so-called TV prior has been used widely in applied literature, e.g. in medical imaging \cite{Setal03a, Setal03b}.
Drawing intuition from the classical regularization literature, the TV prior has a formal density of the form \[\pi(u)\propto \exp(-\alpha\norm{u}_{BV}),\] where $\alpha>0$ is a (hyper)parameter. Here, the norm of bounded variation, $\norm{u}_{BV}$, can be formally thought of as the $L^1$-norm of the derivative of $u$. However, the numerical implementation of Bayesian inversion with a TV prior miss-behaves as the discretization level of the unknown increases, and in particular the TV prior is not discretization invariant, \cite{LS04, LSS09, KLNS12}. For example, depending on the choice of the parameter $\alpha$ as a function of the discretization level, the posterior mean either diverges or converges to an estimate corresponding to a Brownian bridge prior.
For alternative types of approaches to edge-preserving non-parametric Bayesian inversion, see \cite{dunlop2015map, HL11, Helin09}.

We consider the family of Besov-priors which were proposed and shown to be discretization invariant in \cite{LSS09}. A well-posedness theory of the posterior was developed in \cite{Dashti_besov}. These priors are defined by a wavelet expansion with random coefficients, motivated by a formal density of the form 
\begin{equation}\label{eq:prd}\pi(u)\propto \exp(-\norm{u}_{B_{p}^s}^p),
\end{equation}
where $\norm{\cdot}_{B_{p}^s}=\norm{\cdot}_{B_{pp}^s}$ is the Besov space norm with regularity parameter $s$ and integrability parameters $p$. For $p=2$ the Besov space $B_{2}^s$ corresponds to the Sobolev space of functions with $s$ square-integrable derivatives, $H^s$, and the corresponding family of priors are Gaussian with Sobolev-type smoothness parametrized by $s$. We are especially interested in the case $p=1$, which is highly interesting for edge-preserving and sparsity-promoting Bayesian inversion, \cite{KLNS12}. For the case $s=1$ this is due to the close resemblance between the way $\norm{u}_{B_{1}^1}$ and $\norm{u}_{BV}$ work, both dealing with the $L^1$-norm of a (generalized) derivative of $u$, see \cite[Section 2]{KLNS12}. 
We define rigorously the class of Besov priors for $p=1$ and $s\in\R$, called $B^s_1$-Besov priors, in section \ref{sec:besov-def} below. 



To probe the posterior in the non-conjugate context of linear or nonlinear inverse problems with Besov priors, 
one typically resorts to Markov chain Monte Carlo (MCMC) methods. Unfortunately, in practice standard MCMC algorithms become prohibitively expensive for large scale inverse problems, consider e.g. photo-acoustic tomography \cite{pulkkinen16,tick16}. This is due to the heavy computational effort required for solving the forward problem which is needed for computing the acceptance probability at each step of the chain.

In such situations maximum a posteriori estimates are computationally attractive as they only require solving a single optimization problem. Furthermore, it was shown in \cite{KLNS12} that for the $B^1_1$-Besov prior defined via wavelet bases, at finite discretization levels the resulting MAP estimators are sparse and preserve the locations of the edges in the unknown. 
This remains true as discretization is refined, while
our work validates that, unlike in the TV prior case, the discretized MAP estimators converge to the MAP estimators of the limiting infinite dimensional posterior distribution.



%
%
%
%

\subsection{Review of the main results}\label{ssec:mapint}
Our results draw inspiration from previous papers by the authors \cite{Dashti13, HB15} (see also \cite{dunlop2015map}), where concepts that we will call  \emph{strong} and  \emph{weak MAP estimates} were coined.  
We quote both definitions in section \ref{sec:mapdefn} below.

As mentioned earlier, we can formally think of a MAP estimator as a mode of the posterior. In finite dimensional contexts, especially when working with continuous probability distributions, the definition of a mode is straightforward as a maximizer of the probability density function.  If the prior has probability density function of the form  \[\pi(u)\propto\exp(-W(u)),\]
for a suitable positive function $W:X\to [0,\infty)$, the density of the posterior is \[\dpost(u)\propto \exp(-I(u;y)),\]
where  \begin{equation}\label{eq:tik}I(u;y)=\Phi(u;y)+W(u).\end{equation} In this case, a MAP estimator can be interpreted as a classical estimator of the unknown, arising from Tikhonov regularization with penalty term given by the negative log-density of the prior, \cite{EHN96}. 

In the infinite dimensional setting things are less straightforward due to the lack of a uniform reference measure. An intuitive approach to define a mode in a function space $X$ is as follows:
compute the measure of balls with any center $u\in X$ and a fixed radius $\epsilon>0$ 
and proceed by letting $\epsilon$ tend to zero. A mode $\hat u$ is a center point  maximizing these small ball probabilities asymptotically (as $\epsilon$ decreases) in a specific sense.
What distinguishes a strong mode from a weak mode is exactly how `asymptotic maximality' is perceived: 
\begin{enumerate}
\item[a)] in the strong mode case, we look for the maximum probability among all centres in $X$;
\item[b)] in the weak mode case, we look for a centre of a ball with the property that it has maximum probability among all shifts of the ball by elements of a dense subspace $E\subset X$. 
\end{enumerate} 
Natural choices of $E$ turn out to be spaces of zero probability, thus giving one interpretation for the term `weak'. Note that weak and strong modes coincide for $E=X$.
Each of the two notions of mode gives rise to a notion of MAP estimator termed strong and weak MAP estimators, respectively.

Let $B_\epsilon(z)\subset X$ denote the open ball of radius $\epsilon$, centered at $z\in X$. If we can find a functional $I$ defined on an appropriate dense subspace $F\subset X$, such that 
\begin{equation}
\label{e:gOM}\lim_{\epsilon\to0}\frac{\mu(B_\epsilon(z_2))}{\mu(B_\epsilon(z_1))}=\exp(I(z_1)-I(z_2)), 
\end{equation} 
when $\mu=\post$, then for any fixed $z_1\in F$, a $z_2\in F$ maximising the limit or equivalently minimising $I$ is a potential MAP estimator.  
For weak MAP estimators, if the space $E\subset F$ over which we shift the ball centres, can be chosen sufficiently regular so that 
\begin{equation}\label{e:gwlim}
\lim_{\epsilon\to 0}\frac{\mu(B_\epsilon(u-h))}{\mu(B_\epsilon(u))}\quad\mbox{is continuous in $u\in X$ for any fixed $h\in E$},
\end{equation}
again for $\mu=\post$, then it is straightforward to establish the equivalence of weak MAP estimators and the minimisers of $I$. In the case of strong MAP estimators one needs to work considerably more, the difficulty stemming from the `smallness' of the subspace $F\subset X$ with respect to the prior hence also the posterior.

If a functional $I$ satisfying (\ref{e:gOM}) exists, it is called the (generalised) Onsager-Machlup  functional, \cite{DB78, IW14, Dashti13, HB15}. 
As for the limit in (\ref{e:gwlim}), it is identified with $R^\mu_h$, the Radon-Nikodym derivative of the shift of  $\mu$ by $h\in E$ with respect to $\mu$ itself, provided this derivative has a continuous representative, see \cite[lemma 2]{HB15} (quoted in lemma \ref{lem:cont_repres} below).
In finite dimensions, both the existence of the Onsager-Machlup functional and the statement in (\ref{e:gwlim}) follow from the Lebesgue differentiation theorem under very mild conditions on $\Phi$ and the density of the prior.
In the Banach-space setting, establishing (\ref{e:gOM}) and (\ref{e:gwlim}) for the posterior boils down to showing similar results for the prior given that the posterior is obtained by a suitably regular transformation of the prior.

Strong MAP estimators were defined and studied in \cite{Dashti13}, in the context of nonlinear Bayesian inverse problems with Gaussian priors. In this case, an expression for the limit in \eqref{e:gOM} was readily available from the Gaussian literature for $F$ being the Cameron--Martin space of $\mu_0$. The identification of minimisers of the resulting Onsager--Machlup functional with strong MAP estimators, however, required a considerable amount of work, in particular many new estimates involving small ball probabilities under the prior. 

Weak MAP estimators were defined and studied in \cite{HB15}, in the context of linear Bayesian inverse problems with a general class of priors. The authors used the tools from the differentiation and quasi-invariance theory of measures, developed by  Fomin and Skorohod \cite{Boga10}, to connect the zero points of $\beta_h^\mu$, the logarithmic derivative of a measure $\mu$ in the direction $h$,  to the minimisers of the Onsager-Machlup  functional. An essential assumption that makes this possible is the continuity of $\beta_h^\mu$ over $X$ for sufficiently regular $h$. The authors considered as examples Besov priors with integrability parameter $p>1$ and a conditionally Gaussian hierarchical prior with a hyper-prior on the mean.

In both \cite{Dashti13} and \cite{HB15}, the Onsager-Machlup functional was shown to be a Tikhonov-type functional as in \eqref{eq:tik}, where $W$ is the formal negative log-density of the prior, hence MAP estimators are identified with the corresponding Tikhonov approximations in the studied contexts. 


In this work, we are interested in defining and studying both the strong and the weak MAP estimates for generally nonlinear inverse problems with $B^s_1$-Besov priors, that is for Besov priors with integrability parameter $p=1$. Since the prior has formal density as in \eqref{eq:prd}, we expect and indeed show that MAP estimates are identified with minimizers of the Tikhonov-type functional 
\begin{equation}
\label{eq:besovOM}I(u;y)=\Phi(u;y)+\norm{u}_{B_1^s}.
\end{equation} 
In particular, the weak and strong MAP estimates coincide. 
For the considered prior,
the general theory developed in \cite{HB15} for weak MAP estimators does not apply, due to the fact that the logarithmic derivative of the $B^s_1$-Besov priors is inherently discontinuous. 
We will show that the continuity of the Radon-Nikodym derivative $R^\mu_h$, for $h$ in a suitable subspace $E\subset X$, is sufficient to get the result for weak MAP estimators. 
For the $B^s_1$-Besov priors, we prove the continuity of $R^\mu_h$ for shifts $h\in E\subseteq B^r_1(\T^d)$ for any $r>s$, and establish the validity of (\ref{e:gOM}) over $F=B^s_1(\T^d)$.
 It is then straightforward to prove our first main result which is the identification of weak MAP estimators by the minimisers of the functional $I$ in \eqref{eq:besovOM}.
 Furthermore, we generalise the program of \cite{Dashti13} to the assumed non-Gaussian case, in order to prove our second main result which is the identification of strong MAP estimators to minimisers of $I$ in \eqref{eq:besovOM}.
 We also consider the theory of local weak modes separately, relying on the Fomin literature.


\subsection{Consistency of MAP estimators}
Under the frequentist assumption of data generated from a fixed underlying true $u^\dagger$, it is desirable to verify that in the infinitely informative data limit, the Bayesian posterior distribution $\post$ contracts optimally to a Dirac distribution centered on $u^\dagger$. In recent years, there have been many studies on the rates of posterior contraction in the context of Bayesian inverse problems. The case of linear inverse problems with Gaussian and conditionally Gaussian priors is now well understood 
\cite{KVZ12, ALS13, KVZ13, ASZ14, KLS15,SVZ13,knapik2016}, and a theory for linear inverse problems with non-Gaussian priors is also being developed \cite{KR13, KS14}. For nonlinear inverse problems the asymptotic performance of the posterior is not yet fully understood, with some partial contributions being \cite{PSZ13, WZ16,vollmer2013}. 

Note that for general nonlinear problems, especially with finite dimensional data as in the present paper, one cannot expect to recover the underlying truth $u^\dagger$ in the infinitely informative data limit. Instead the aim is to recover a $u^\ast\in X$ such that $\G(u^\ast)=\G(u^\dagger)$. {Posterior contraction rates for Bayesian inverse problems with Besov priors are studied in ongoing work of a subset of the authors.} A form of weak consistency of the strong MAP estimator in the presence of repeated independent observations, for general nonlinear inverse problems with Gaussian priors, was shown in \cite{Dashti13}. 
In the present paper, we prove a similar result for the strong (hence the weak) MAP estimator obtained using the Besov prior for $p=1$. 

\subsection{Notation}

Throughout the paper we assume that $X$ is a separable Banach space equipped with the Borel $\sigma$-algebra. All probability measures are assumed to be Borel measures. The Euclidean norm in $\R^J$ is denoted by $|\cdot|$ to distuinguish it from the norm of any general $X$ denoted by 
$\norm{\cdot}_X$. We write $f\propto g$ for two functions $f,g:X\to \R$ if there exists a universal constant $c\in\R$ such that $f = cg$ as functions.

\begin{definition}\label{d:quasiinvariant} A measure $\mu$ is called \emph{quasi-invariant along $h$}, if the translated measure
$\mu_h(\cdot) := \mu(\cdot-h)$
is absolutely continuous with respect to $\mu$. We define \[Q(\mu)=\{h\, |\, \mu \;\text{is quasi-invariant along}\;h\},\]
which is readily verified to be a linear subspace.
\end{definition}

\begin{notation}
Let $h\in Q(\mu)$. We denote the Radon--Nikodym derivative of $\mu_h$ with respect to $\mu$ by $R_h^\mu \in L^1(\mu)$. 
\end{notation}


\subsection{Organization of the paper}
This paper is organized as follows: in section \ref{sec:mapdefn} we discuss the definition of modes for probability measures on separable Banach spaces. We introduce novel localized versions of the modes studied in previous work and discuss briefly how different modes can be characterized for log- or quasi-concave measures. The Besov priors are introduced and discussed in section \ref{sec:besov-def} including the key results relating to the Radon--Nikodym derivative of the Besov prior in section \ref{ssec:rnsb}. Section \ref{sec:bip} covers the Bayesian inverse problem setup and our main results related to identification of weak and strong MAP estimates as the minimizers of certain variational problem. Moreover, the weak consistency result is given in section \ref{sec:consistency}.
In section \ref{sec:logder} we discuss the logarithmic derivative of the posterior and its use in characterizing the MAP estimates. Finally, all proofs are postponed to section \ref{sec:proofs}.

\section{Modes of measures on Banach spaces}\label{sec:background}\label{sec:mapdefn}
In subsection \ref{ssec:wsgl} we introduce the two existing notions of {maximum a posteriori estimator} (modes of the posterior measure) proposed in
\cite{Dashti13, HB15} in the context of measures on infinite-dimensional spaces. We also define two new notions of local modes and hence local MAP estimates. In subsection \ref{ssec:logconc}, we focus on log-concave measures, study the structure of the set of modes, and give conditions for local modes to be global.

\subsection{Weak and strong, global and local}\label{ssec:wsgl}

The following definition of a mode, introduced in \cite{Dashti13}, grows out of the idea that highest small ball probabilities are obtained asymptotically at the mode.
\begin{definition}
\label{def:map}
Let
$M^\epsilon = \sup_{u\in X} \mu(B_\epsilon(u))$.
We call a point $\hat u\in X$ a \emph{mode of the measure $\mu$}, if it satisfies
$$\lim_{\epsilon\to 0}\frac{\mu(B_\epsilon(\hat u))}{M^\epsilon} = 1.$$ A mode of the posterior measure $\post$ in \eqref{eq:intpost}, is called a \emph{maximum a posteriori} (MAP) estimate. 
\end{definition}
Below we occasionally use the terms \emph{strong mode} and \emph{strong MAP} estimator for the concepts introduced in definition \ref{def:map} 
in order to distinguish them from the following weaker notion of a mode (similarly, the \emph{weak mode} or \emph{weak MAP}) introduced in \cite{HB15}.  

\begin{definition}
\label{def:wmap}
Let $E$ be a dense subspace of $X$. We call a point $\hat u \in X$, $\hat u \in {\rm supp} (\mu)$, a \emph{weak mode} of $\mu$ if 
\begin{equation}
	\label{eq:wmap}
	\lim_{\epsilon \to 0} \frac{\mu(B_\epsilon(\hat u - h))}{\mu(B_\epsilon(\hat u))} \leq 1,
\end{equation}
for all $h\in E$. A weak mode of the posterior measure $\post$ in \eqref{eq:intpost}, is called a \emph{weak maximum a posteriori} (wMAP) estimate.
\end{definition}

Notice that the definition of a weak mode is dependent on the choice of the subspace $E$. Therefore, in some contexts it may be more appropriate to discuss $E$-weak modes. In the following, however, we will suppress this dependence since the key question to our study is whether such a space exists.

The notions of weak and strong mode are related as follows: any strong mode is a weak mode for the choice $E=X$ \cite[lemma 3]{HB15}, which is straightforward to see by simply estimating $\mu(B_\epsilon(\hat u - h)) \leq M^\epsilon$.
The key motivation to study the weak definition, is the case when small ball asymptotics are not available explicitly or only available in some subspace of translations $h$. 
It is then of interest to choose $E$ so that an expression for the limit on the left hand side of \eqref{eq:wmap} exists pointwise.
This typically leads to choices of $E$ which have zero probability with respect to $\mu$.
The following lemma, which is an immediate generalization of \cite[lemma 2]{HB15}, provides further guidance for this choice.

\begin{lemma}
\label{lem:cont_repres}
Assume that $\mu$ is quasi-invariant along the vector $h$. Let $A\in\mathcal{B}(X)$ be convex, bounded and symmetric and define $A^\diam:=\diam A$. Suppose $R_h^\mu$ has a continuous representative $\tilde R_h^\mu \in C(X)$, i.e., $R_h^\mu - \tilde R_h^\mu = 0$ in $L^1(\mu)$.  Then it holds that
\begin{equation*}
	\lim_{\diam \to 0} \frac{\mu_h(A^\diam+u)}{\mu(A^\diam+u)} = \tilde R_h^\mu(u)
\end{equation*}
for any $u \in X$.
\end{lemma}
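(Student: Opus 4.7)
The plan is to reduce the ratio to an averaging statement for the continuous function $\tilde R_h^\mu$ over the shrinking sets $A^\diam+u$, in the spirit of the Lebesgue differentiation theorem, exploiting the fact that $\tilde R_h^\mu$ is already continuous at $u$.

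First I would rewrite the numerator using the Radon--Nikodym derivative of $\mu_h$ with respect to $\mu$. By definition of $R_h^\mu$,
\[
\mu_h(A^\diam+u) \;=\; \int_{A^\diam+u} R_h^\mu(v)\, d\mu(v).
\]
Since $R_h^\mu = \tilde R_h^\mu$ in $L^1(\mu)$, the integral is unchanged if we replace $R_h^\mu$ by its continuous representative. Dividing by $\mu(A^\diam+u)$ (assumed positive for small enough $\diam$; otherwise the statement is vacuous at $u$), the claimed quantity is simply the $\mu$-average of $\tilde R_h^\mu$ over $A^\diam+u$:
\[
\frac{\mu_h(A^\diam+u)}{\mu(A^\diam+u)} \;=\; \frac{1}{\mu(A^\diam+u)}\int_{A^\diam+u}\tilde R_h^\mu(v)\, d\mu(v).
\]

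Next I would use continuity of $\tilde R_h^\mu$ at $u$ together with the boundedness and symmetry of $A$ to control the oscillation of $\tilde R_h^\mu$ on $A^\diam+u$. Since $A$ is bounded there exists $M>0$ with $\|a\|_X\le M$ for all $a\in A$, hence $\sup_{v\in A^\diam+u}\|v-u\|_X \le \diam M$. For any $\eta>0$, continuity of $\tilde R_h^\mu$ at $u$ supplies $\delta>0$ such that $|\tilde R_h^\mu(v)-\tilde R_h^\mu(u)|<\eta$ whenever $\|v-u\|_X<\delta$; taking $\diam < \delta/M$ then gives, pointwise on $A^\diam+u$, the bound $|\tilde R_h^\mu(v)-\tilde R_h^\mu(u)|<\eta$.

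Inserting this into the averaged expression above and using that $\mu(A^\diam+u)$ cancels with the constant $\tilde R_h^\mu(u)$, I would obtain
\[
\left|\frac{\mu_h(A^\diam+u)}{\mu(A^\diam+u)} - \tilde R_h^\mu(u)\right|
\;\le\; \frac{1}{\mu(A^\diam+u)}\int_{A^\diam+u}\bigl|\tilde R_h^\mu(v)-\tilde R_h^\mu(u)\bigr|\, d\mu(v) \;\le\; \eta,
\]
from which the limit follows by letting $\eta\to0$. The only subtlety worth flagging is the requirement that $\mu(A^\diam+u)>0$ for small $\diam$; this is automatic at points in $\mathrm{supp}(\mu)$ provided the convex symmetric set $A$ has nonempty interior, but the main content of the lemma -- reducing weak-mode questions to pointwise evaluation of $\tilde R_h^\mu$ -- is captured entirely by the averaging step above.
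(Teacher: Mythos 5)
Your proof is correct: writing $\mu_h(A^\diam+u)$ as the integral of the continuous representative $\tilde R_h^\mu$ against $\mu$ over $A^\diam+u$ and then bounding the oscillation via continuity at $u$ is precisely the intended argument, and the paper itself gives no separate proof, deferring to \cite[lemma 2]{HB15} of which this is an ``immediate generalization.'' Your flagging of the positivity of $\mu(A^\diam+u)$ for small $\diam$ is appropriate; in the paper's applications $A$ has nonempty interior and the relevant measures have full support, so nothing further is needed.
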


\begin{remark}\label{rem:cont_repres} According to lemma \ref{lem:cont_repres}, it is desirable to consider a subspace $E$, such that $R_h^\mu$ is continuous for $h\in E$. A sufficient condition for the continuity of $R_h^\mu$ was given in \cite{HB15}, namely the so-called logarithmic derivative of $\mu$ along $h$ needed to be continuous and exponentially integrable with respect to $\mu$.
For the sparsity promoting measure that we will consider in this paper, the logarithmic derivative is inherently discontinuous (section \ref{sec:logder}). We are however able to show the continuity of $R_h^\mu$ over an appropriate subspace $E$ in subsection \ref{ssec:rnsb} by using its explicit expression.
\end{remark}


Both strong and weak mode can also be associated with a natural localization described by the following definitions:

\begin{definition}[Local modes]
Let $\hat u \in X$ be such that $\hat u \in {\rm supp} (\mu)$.
\begin{enumerate} 
\item[(1)] We call $\hat u$ a {\em local mode of the measure} $\mu$, if there exists a $\delta>0$ such that the quantity $M^\epsilon_\delta = \sup_{u\in B_\delta(\hat u)} \mu(B_\epsilon(u))$ satisfies
\begin{equation*}
	\lim_{\epsilon\to 0} \frac{\mu(B_\epsilon(\hat u))}{M^\epsilon_\delta} = 1.
\end{equation*}
\item[(2)]We call $\hat u$ a \emph{local weak mode} of $\mu$ if 
there exists $\delta>0$ such that
\begin{equation}
	\label{eq:lwmap}
	\lim_{\epsilon \to 0} \frac{\mu(B_\epsilon(\hat u - h))}{\mu(B_\epsilon(\hat u))} \leq 1
\end{equation}
for all $h\in B^X_\delta(0) \cap E$. 
\end{enumerate}
\end{definition}

Local modes represent an analogue to local maxima of the probability density function in finite dimensions.
In the setting of \cite{HB15} the local wMAP coincides with the zero points of the logarithmic derivative of the posterior.
Especially, regularization techniques in non-linear inverse problems are often known to give birth to local maxima
and, therefore, the local wMAP can give statistical interpretation for these points.

\subsection{Modes for log-concave measures}\label{ssec:logconc}

This work studies the Besov prior which is a prototypical example of a log-concave measure. Below we show some general properties regarding the modes of such class of measures. Most of these ideas naturally extend to a larger class called the quasi-concave measures.

\begin{definition}\label{d:kconc}
	A probability measure $\mu$ in $(X, \mathcal{B}(X))$ is called \emph{logarithmically-concave} or \emph{log-concave}, if 
	\[\mu(\lambda A+(1-\lambda)B)\ge \mu(A)^\lambda \mu(B)^{1-\lambda},\] 
	for all $A,B\in \mathcal{B}(X).$ Moreover, $\mu$ is called \emph{quasi-concave} if
	\[\mu(\lambda A+(1-\lambda)B)\ge \min\{\mu(A), \mu(B)\},\] 
	for all $A,B\in \mathcal{B}(X).$
\end{definition}
It is straightforward to verify that any log-concave measure is also quasi-concave.
An immediate result of quasi-concavity is the well-known Anderson inequality \cite{TA55, CB74}.

\begin{proposition}\label{c:B-anderson}
Let $\mu$ be a symmetric quasi-concave measure on $X$. For any symmetric and convex set $A\subset X$ we have 
$$
\mu(A+x)\le \mu(A),\quad \mbox{for any } x\in X.
$$
\end{proposition}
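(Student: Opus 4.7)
The plan is to prove this via the classical Anderson-style argument using symmetry and the quasi-concavity inequality of Definition \ref{d:kconc} with $\lambda = 1/2$.

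First, I would exploit the symmetry hypotheses: since $\mu$ is symmetric we have $\mu(B) = \mu(-B)$ for every Borel set $B$, and since $A$ is symmetric we have $-A = A$. Combining these two facts, for any $x \in X$,
\[
\mu(A-x) \;=\; \mu\bigl(-(A-x)\bigr) \;=\; \mu(-A+x) \;=\; \mu(A+x).
\]
So the translations by $+x$ and $-x$ have identical $\mu$-mass.

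Next, the key geometric observation is that because $A$ is convex, the Minkowski average of the two translates is contained in $A$:
\[
\tfrac{1}{2}(A+x) + \tfrac{1}{2}(A-x) \;\subseteq\; A.
\]
Indeed, if $u = a+x$ and $v = b-x$ with $a,b \in A$, then $\tfrac{1}{2}(u+v) = \tfrac{1}{2}(a+b) \in A$ by convexity. Applying monotonicity of $\mu$ and then the quasi-concavity inequality from Definition \ref{d:kconc} with $\lambda = 1/2$, $B_1 = A+x$, $B_2 = A-x$, yields
\[
\mu(A) \;\ge\; \mu\bigl(\tfrac{1}{2}(A+x) + \tfrac{1}{2}(A-x)\bigr) \;\ge\; \min\{\mu(A+x),\, \mu(A-x)\}.
\]
Combining with the identity $\mu(A+x) = \mu(A-x)$ established above gives $\mu(A) \ge \mu(A+x)$, which is the desired conclusion.

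There is no serious obstacle here, the argument is short and elementary once one has the right decomposition. The only minor care-point is being explicit about how \emph{symmetric measure} and \emph{symmetric set} are used in tandem to equate $\mu(A-x)$ with $\mu(A+x)$, since the quasi-concavity inequality alone only produces the weaker lower bound $\min\{\mu(A+x),\mu(A-x)\}$; one truly needs both symmetry hypotheses to remove the minimum and obtain the clean one-sided bound.
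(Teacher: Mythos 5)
Your argument is correct and complete: the identity $\mu(A-x)=\mu(A+x)$ from the two symmetry hypotheses, the inclusion $\tfrac12(A+x)+\tfrac12(A-x)\subseteq A$ from convexity, and the quasi-concavity inequality with $\lambda=\tfrac12$ together give exactly the claimed bound. The paper itself offers no proof of this proposition, deferring to the classical references \cite{TA55, CB74}; what you have written is precisely the standard Anderson argument those references contain, so there is nothing to add.
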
 

The next result follows from the Anderson inequality.

\begin{proposition}\label{prop:symqcex}
Suppose that the measure $\mu$ on $X$ is symmetric around $u$ and quasi-concave. Then $u$ is a strong mode of $\mu$.%
\end{proposition}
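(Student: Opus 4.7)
The plan is to prove the stronger claim that $\mu(B_\epsilon(u))=M^\epsilon$ for every $\epsilon>0$, which makes the ratio in Definition \ref{def:map} identically equal to $1$ and hence trivially converges to $1$. This reduces the statement to a direct application of the Anderson inequality already recorded in Proposition \ref{c:B-anderson}.

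First, I would reduce to the case $u=0$ by translation. Defining $\nu(A):=\mu(A+u)$, symmetry of $\mu$ around $u$ is exactly symmetry of $\nu$ around $0$. Quasi-concavity is invariant under translation: for any Borel $A,B$ and $\lambda\in[0,1]$,
\[
\nu(\lambda A+(1-\lambda)B)=\mu\bigl(\lambda(A+u)+(1-\lambda)(B+u)\bigr)\ge\min\{\mu(A+u),\mu(B+u)\}=\min\{\nu(A),\nu(B)\},
\]
so $\nu$ inherits quasi-concavity from $\mu$. It therefore suffices to show that $0$ is a strong mode of $\nu$.

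Second, I would observe that the open ball $B_\epsilon(0)=\{x\in X:\norm{x}_X<\epsilon\}$ is symmetric (since $\norm{-x}_X=\norm{x}_X$) and convex (by the triangle inequality). Applying Proposition \ref{c:B-anderson} to $\nu$ with $A=B_\epsilon(0)$ and an arbitrary $v\in X$ gives
\[
\nu(B_\epsilon(v))=\nu(B_\epsilon(0)+v)\le \nu(B_\epsilon(0)).
\]
Taking the supremum over $v\in X$ yields $M^\epsilon_\nu:=\sup_{v\in X}\nu(B_\epsilon(v))=\nu(B_\epsilon(0))$. Translating back, $\mu(B_\epsilon(u))=M^\epsilon$ for every $\epsilon>0$, and consequently
\[
\lim_{\epsilon\to 0}\frac{\mu(B_\epsilon(u))}{M^\epsilon}=1,
\]
so $u$ is a strong mode of $\mu$ in the sense of Definition \ref{def:map}.

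There is no genuine obstacle: once one recognizes that the Anderson-type inequality of Proposition \ref{c:B-anderson} already applies to the symmetric convex sets $B_\epsilon(0)$, the conclusion follows without any small-ball asymptotics or delicate estimates. The only minor care needed is the preliminary translation step to bring the centre of symmetry to the origin, and the verification that quasi-concavity is preserved by this translation.
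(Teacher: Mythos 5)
Your proof is correct and follows essentially the same route as the paper: reduce to symmetry about the origin and apply the Anderson inequality (Proposition \ref{c:B-anderson}) to the symmetric convex balls $B_\epsilon(0)$, so that $\mu(B_\epsilon(u))=M^\epsilon$ for every $\epsilon$ and the ratio in Definition \ref{def:map} is identically $1$. The only difference is that you spell out the translation step and the invariance of quasi-concavity under translation, which the paper dispatches with a ``without loss of generality.''
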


Let us next consider briefly the structure of the set of strong modes of a quasi-concave measure $\mu$. 
When working in finite dimensions, $X=\R^d$, a probability density function $f$ with respect to the Lebesgue measure
is called quasi-concave if for all $x,y\in\R^d$ and all $\lambda\in[0,1]$ we have \[f((1-\lambda)x+\lambda y)\geq \min\{f(x),f(y)\}.\]
Clearly a quasi-concave probability density function has a convex set of global modes. For a reference on convexity and unimodality in finite dimensions see \cite{DJ88}. We show that a similar result holds in infinite dimensions for our definition of strong mode.

\begin{proposition}\label{prop:qcunim}
Suppose that the measure $\mu$ in $X$ is quasi-concave (but not necessarily symmetric). Then the set of strong modes is convex.
\end{proposition}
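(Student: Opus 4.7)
The plan is to argue directly from the definition of a strong mode, using quasi-concavity on the open balls that appear in the small-ball ratio. So let $\hat u_1,\hat u_2\in X$ be two strong modes of $\mu$, fix $\lambda\in[0,1]$, and set $\hat u := \lambda \hat u_1 + (1-\lambda)\hat u_2$. My goal is to show that
\[
\lim_{\epsilon\to 0}\frac{\mu(B_\epsilon(\hat u))}{M^\epsilon}=1,
\]
which will identify $\hat u$ as a strong mode and hence show that the set of strong modes is convex.

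The first step is a geometric identity for balls in a normed space. Since $B_\epsilon(u)=u+B_\epsilon(0)$ and $B_\epsilon(0)$ is convex, one has $\lambda B_\epsilon(0)+(1-\lambda)B_\epsilon(0)=B_\epsilon(0)$ (the inclusion ``$\subseteq$'' is convexity and the reverse comes from writing $z=\lambda z+(1-\lambda)z$). This immediately yields
\[
\lambda B_\epsilon(\hat u_1)+(1-\lambda)B_\epsilon(\hat u_2)=B_\epsilon(\hat u).
\]
Applying the quasi-concavity hypothesis of Definition \ref{d:kconc} to the sets $A=B_\epsilon(\hat u_1)$ and $B=B_\epsilon(\hat u_2)$ then gives
\[
\mu(B_\epsilon(\hat u))\ge \min\bigl\{\mu(B_\epsilon(\hat u_1)),\,\mu(B_\epsilon(\hat u_2))\bigr\}.
\]

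Dividing by $M^\epsilon$ and using the definition of $M^\epsilon$ as a supremum, I get the sandwich
\[
\min\left\{\frac{\mu(B_\epsilon(\hat u_1))}{M^\epsilon},\,\frac{\mu(B_\epsilon(\hat u_2))}{M^\epsilon}\right\}\le \frac{\mu(B_\epsilon(\hat u))}{M^\epsilon}\le 1.
\]
Since $\hat u_1$ and $\hat u_2$ are strong modes, both ratios on the left tend to $1$ as $\epsilon\to 0$ by Definition \ref{def:map}, so the middle quantity also tends to $1$. Hence $\hat u$ is a strong mode, and the set of strong modes is convex.

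There is essentially no technical obstacle: the only subtle point is the ball identity in step one, but it follows from convexity of the unit ball in any normed space, so the proof works in the full generality of a separable Banach space $X$ assumed in the paper. Note also that the symmetry of $\mu$ is not used at all, matching the statement's caveat that $\mu$ need not be symmetric.
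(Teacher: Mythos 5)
Your proof is correct and follows essentially the same route as the paper's: both use the identity $B_\epsilon(\lambda \hat u_1+(1-\lambda)\hat u_2)=\lambda B_\epsilon(\hat u_1)+(1-\lambda)B_\epsilon(\hat u_2)$ together with quasi-concavity to sandwich $\mu(B_\epsilon(\hat u))/M^\epsilon$ between $\min$ of the two mode ratios and $1$. No gaps.
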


It turns out log-concavity is a sufficient condition for the global and local modes to coincide. 

\begin{theorem}\label{thm:wmode}
Suppose $\mu$ is log-concave and $\hat u$ is a local mode. Then $\hat u$ is also a global mode.
Similarly, if $\hat u$ is a local weak mode then it is also a global weak mode.
\end{theorem}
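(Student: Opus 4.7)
The plan is to leverage log-concavity together with the elementary Banach-space identity
\[
\lambda B_\epsilon(x) + (1-\lambda) B_\epsilon(y) \;=\; B_\epsilon(\lambda x + (1-\lambda) y)
\]
for $\lambda\in[0,1]$ and $x,y\in X$, which follows from $B_{\lambda\epsilon}(0)+B_{(1-\lambda)\epsilon}(0)=B_\epsilon(0)$ via the triangle inequality. Combining this identity with the definition of log-concavity applied to $A=B_\epsilon(x)$, $B=B_\epsilon(y)$ yields the basic estimate
\[
\mu\bigl(B_\epsilon(\lambda x + (1-\lambda) y)\bigr) \;\geq\; \mu(B_\epsilon(x))^\lambda\,\mu(B_\epsilon(y))^{1-\lambda},
\]
which is the single tool driving both parts.

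For the strong-mode part, I would argue by contradiction. Assume $\hat u$ is a local mode with radius $\delta>0$ but not a global mode. Then there exist $u^*\in X$, a constant $c>1$, and a sequence $\epsilon_n\to 0$ with $\mu(B_{\epsilon_n}(u^*))/\mu(B_{\epsilon_n}(\hat u))\to c$. Choose $\lambda\in(0,1)$ close enough to $1$ that $u_\lambda:=\lambda \hat u+(1-\lambda)u^*=\hat u+(1-\lambda)(u^*-\hat u)$ lies in $B_\delta(\hat u)$. Applying the basic estimate with $x=\hat u$, $y=u^*$ gives
\[
\frac{\mu(B_{\epsilon_n}(u_\lambda))}{\mu(B_{\epsilon_n}(\hat u))}
\;\geq\; \left(\frac{\mu(B_{\epsilon_n}(u^*))}{\mu(B_{\epsilon_n}(\hat u))}\right)^{1-\lambda}
\;\longrightarrow\; c^{1-\lambda} > 1.
\]
But $u_\lambda\in B_\delta(\hat u)$ forces $\mu(B_{\epsilon_n}(u_\lambda))\leq M_\delta^{\epsilon_n}$, so the left-hand side is bounded above by $M_\delta^{\epsilon_n}/\mu(B_{\epsilon_n}(\hat u))\to 1$, contradicting the local-mode property.

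For the weak-mode part, suppose $\hat u$ is a local weak mode with radius $\delta>0$ and fix an arbitrary $h\in E$. Since $E$ is a subspace, $\lambda h\in E$ for every $\lambda\in(0,1]$, and we may pick $\lambda$ so small that $\lambda h\in B_\delta^X(0)$. The algebraic identity $\hat u-\lambda h=(1-\lambda)\hat u+\lambda(\hat u-h)$ and the basic estimate give
\[
\frac{\mu(B_\epsilon(\hat u-\lambda h))}{\mu(B_\epsilon(\hat u))}
\;\geq\; \left(\frac{\mu(B_\epsilon(\hat u-h))}{\mu(B_\epsilon(\hat u))}\right)^{\lambda}.
\]
Passing to $\limsup$ as $\epsilon\to 0$, the left-hand side is $\leq 1$ by the local weak-mode hypothesis applied to $\lambda h$, so the right-hand side is $\leq 1$, giving the desired bound for $h$.

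The argument is mostly bookkeeping once the ball identity is in hand; I do not anticipate a serious obstacle. The only point requiring a little care is the contradiction step in the strong case: one must pass to the $\limsup$ along the same subsequence $\epsilon_n$ on which $\mu(B_{\epsilon_n}(u^*))/\mu(B_{\epsilon_n}(\hat u))$ is bounded below by a constant $>1$, and simultaneously exploit the local-mode upper bound $M_\delta^{\epsilon_n}/\mu(B_{\epsilon_n}(\hat u))\to 1$. Note that the proof uses neither symmetry nor quasi-invariance of $\mu$, only the log-concave inequality of Definition \ref{d:kconc}; full quasi-concavity would not be enough, since the exponent $1-\lambda$ is essential for making $c^{1-\lambda}$ strictly larger than $1$.
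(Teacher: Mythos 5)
Your interpolation inequality and the weak-mode half are correct and coincide with the paper's argument: the identity $B_\epsilon(\kappa u_1+(1-\kappa)u_2)=\kappa B_\epsilon(u_1)+(1-\kappa)B_\epsilon(u_2)$ combined with log-concavity gives exactly the paper's inequality \eqref{eq:convex_ineq}, and scaling $h$ so that $\lambda h\in B^X_\delta(0)\cap E$ settles the weak case just as in the paper.

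The strong-mode half, however, has a genuine gap at its very first step. From ``$\hat u$ is not a global mode'' you may only conclude that there are $\eta>0$, a sequence $\epsilon_n\to0$, and points $u_n$ \emph{depending on $n$} with $\mu(B_{\epsilon_n}(u_n))\ge(1+\eta)\,\mu(B_{\epsilon_n}(\hat u))$: the supremum defining $M^{\epsilon}$ ranges over all of $X$, need not be attained, and its near-maximizers in general move with $\epsilon$. Your assertion that a single fixed $u^*$ and a constant $c>1$ exist with $\mu(B_{\epsilon_n}(u^*))/\mu(B_{\epsilon_n}(\hat u))\to c$ is therefore unjustified, and your subsequent choice of $\lambda$ ``close enough to $1$ that $u_\lambda\in B_\delta(\hat u)$'' silently uses that $\|u^*-\hat u\|_X$ is one fixed finite number. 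If the competitors $u_n$ escape to infinity, no single $\lambda$ places all the interpolants $\lambda\hat u+(1-\lambda)u_n$ inside $B_\delta(\hat u)$, and the contradiction evaporates. This is precisely the difficulty the paper's proof is organised around: it first argues that the local-mode property must already fail at some finite radius $\delta+1$, so that near-maximizers $u_j$ of $M^{\epsilon_j}_{\delta+1}$ can be taken in the bounded set $B_{\delta+1}(\hat u)$, and then applies the interpolation inequality to each $u_j$ separately with the uniform exponent $\tilde\delta=\delta/(2(1+\delta))$, which keeps every interpolant $\hat u-\tilde\delta(\hat u-u_j)$ inside $B_\delta(\hat u)$. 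To repair your argument you need to add such a localisation of the competitors (or otherwise bound them uniformly); the interpolation inequality is the right tool, but it must be applied along a moving sequence $u_n$ rather than at a fixed $u^*$.
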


\section{Besov priors with $p=1$}
\label{sec:besov-def}
The family of Besov priors has been introduced in \cite{LSS09} and studied in \cite{Dashti_besov, HB15}. In subsection \ref{ssec:defn} we recall the definition and some useful properties of Besov priors with integrability parameter $p=1$ and regularity parameter $s>0$, termed $B^s_1$-Besov priors, on which we focus in this work. We also present some straightforward convexity properties of $B^s_1$-Besov priors. 
The main results of this section are listed in subsection \ref{ssec:rnsb}, where we compute the Radon-Nikodym derivative $R_h^\mu$ for $B^s_1$-Besov priors, determine the space $E$ in which $h$ needs to live in order for the corresponding Radon-Nikodym derivative $R_h^\mu$ to be continuous, and finally show that $I_0(u)=\|u\|_{B^s_1}$ is the Onsager-Machlup functional for $B^s_1$-Besov measures.

\subsection{Definition and basic properties}\label{ssec:defn}

We work with periodic functions on a $d$-dimensional torus, $\T^d$. We first define the periodic Besov spaces $B^s_{pq}(\T^d)$, where $s\in\R$ parametrises smoothness and $p,q\geq 1$ are integrability parameters. We concentrate on the case $p=q$ and write $B^s_p=B^s_{pp}$. To define the Besov spaces, we let $\{\psi_\ell\}_{\ell=1}^\infty$ be
an orthonormal wavelet basis 
for $L^2(\T^d)$, where we have utilized a global indexing.
We can then characterise $B^s_{p}(\T^d)$  using the given basis
in the following way: the function $f:\T^d\to\R$ defined by the series expansion
\begin{equation}
	f(x) = \sum_{\ell=1}^\infty c_\ell \psi_\ell(x)
\end{equation}
belongs to $B^s_{p}(\T^d)$, if and only if the norm
\begin{equation}
	\norm{f}_{B^s_{p}(\T^d)}
	= \left(\sum_{\ell=1}^\infty \ell^{p(\frac {s}d+\frac 12)-1} |c_\ell|^p\right)^\frac1p,
\end{equation}is finite.
Throughout, we assume that the basis is $r$-regular for $r$ large enough in order to consist a basis for a Besov space with smoothness $s$, \cite{Daubechies}. 

We now follow the construction in \cite{LSS09} to define periodic Besov priors corresponding to $p=1$,
using series expansions in the above wavelet basis with random coefficients. Notice also the work \cite{Ramlau} on defining Besov priors for functions on the full space $\R^d$.
\begin{definition}
\label{def:besov_prior}
Let $(X_\ell)_{\ell=1}^\infty$ be independent identically
distributed real-valued random variables with the probability density function
\begin{equation}
	\pi_X(x) = \frac 12 \exp(-|x|).
\end{equation}
Let $U$ be the random function
\begin{equation*}
	U(x) = \sum_{\ell=1}^\infty \ell^{-{(\frac sd-\frac 12)}} X_\ell \psi_\ell(x),
	\quad x \in \T^d.
\end{equation*}
Then we say that $U$ is distributed according to a $B^s_1$-Besov prior.
\end{definition}
The next lemma determines the smoothness of functions drawn from the $B^s_1$-Besov prior and shows the existence of certain exponential moments.
\begin{lemma}\cite[lemma 2]{LSS09}
\label{lem:besov_lss}
Let $U$ be as in Definition \ref{def:besov_prior} and let $t<s-d$. Then it holds that
\begin{itemize}
	\item[(i)] $\norm{U}_{B^t_{1}} < \infty$, \quad almost surely, and
	\item[(ii)] $\expec \exp(\frac 12 \norm{U}_{B^t_{1}} ) < \infty$.
\end{itemize}
\end{lemma}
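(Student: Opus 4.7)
The plan is to reduce both (i) and (ii) to elementary facts about the Laplace distribution by computing $\|U\|_{B^t_1}$ explicitly in terms of the random coefficients. Using the wavelet characterization of the Besov norm, with $c_\ell = \ell^{-(s/d-1/2)}X_\ell$, one gets
\begin{equation*}
\|U\|_{B^t_1(\T^d)} = \sum_{\ell=1}^\infty \ell^{t/d-1/2}\,|c_\ell| = \sum_{\ell=1}^\infty \ell^{(t-s)/d}\,|X_\ell|,
\end{equation*}
so everything reduces to a sum of independent, weighted absolute Laplace variables. Note that $t<s-d$ is exactly $(t-s)/d<-1$, which will be the decisive inequality throughout.

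For (i) I would simply take expectations: since $\expec|X_\ell|=1$, Tonelli gives $\expec\|U\|_{B^t_1} = \sum_\ell \ell^{(t-s)/d}<\infty$ under the hypothesis $t<s-d$. Hence $\|U\|_{B^t_1}<\infty$ almost surely.

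For (ii) I would exploit independence and the explicit moment generating function of $|X_\ell|$. A direct computation with the density $\frac12 e^{-|x|}$ yields $\expec e^{\lambda|X_\ell|} = (1-\lambda)^{-1}$ for $\lambda\in[0,1)$, and for every $\ell\geq 1$ we have $\tfrac12\ell^{(t-s)/d}\leq \tfrac12<1$. By monotone convergence applied to the partial sums,
\begin{equation*}
\expec \exp\bigl(\tfrac12\|U\|_{B^t_1}\bigr) = \prod_{\ell=1}^\infty \expec\exp\bigl(\tfrac12\ell^{(t-s)/d}|X_\ell|\bigr) = \prod_{\ell=1}^\infty\frac{1}{1-\tfrac12\ell^{(t-s)/d}}.
\end{equation*}
Taking logarithms and using $-\log(1-x)\leq 2x$ for $x\in[0,\tfrac12]$, the log of the product is bounded by $\sum_\ell \ell^{(t-s)/d}<\infty$, again by the hypothesis $t<s-d$, so the product is finite.

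I expect the routine calculations to be entirely unproblematic; the only mildly delicate step is the justification of interchanging expectation with the infinite product, which is handled by monotone convergence on the partial sums $\sum_{\ell\leq N}\tfrac12\ell^{(t-s)/d}|X_\ell|$ (the integrands are nonnegative and nondecreasing in $N$). Everything else is elementary once one has rewritten the Besov norm as a weighted $\ell^1$ sum of the $|X_\ell|$.
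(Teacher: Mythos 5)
Your proof is correct. Note that the paper itself gives no proof of this lemma --- it is quoted verbatim from \cite[lemma 2]{LSS09} --- so there is nothing internal to compare against; your argument (rewriting $\norm{U}_{B^t_1}=\sum_\ell \ell^{(t-s)/d}|X_\ell|$, using Tonelli and $\expec|X_\ell|=1$ for (i), and independence plus the Laplace moment generating function $\expec e^{\lambda|X_\ell|}=(1-\lambda)^{-1}$ with the bound $-\log(1-x)\le 2x$ on $[0,\tfrac12]$ for (ii)) is the standard one and is essentially the $p=1$ specialization of the computation in \cite{LSS09}. All the exponents check out ($\ell^{t/d-1/2}\cdot\ell^{-(s/d-1/2)}=\ell^{(t-s)/d}$, and $t<s-d$ is exactly $(t-s)/d<-1$), and the monotone-convergence justification for passing to the infinite product is adequate.
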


\begin{notation}\label{not:densities}We denote by $\rho_\ell$ the probability measure of the random variable $\ell^{-s/d-1/2}X_\ell$ on $\R$.
We identify the random function $U$ in definition \ref{def:besov_prior} with the product measure of coefficients $(\ell^{-s/d-1/2}X_\ell)_\ell$ in $(\R^\infty,{\mathcal B}(\R^\infty))$, which we denote by $\lambda=\bigotimes_{\ell=1}^\infty \rho_\ell$.\end{notation}

We next consider the convexity of the $B^s_1$-Besov prior.
\begin{lemma}\label{lem:logconc}
For any $s>0$, the $B^s_1$-Besov measure $\lambda$ is logarithmically concave.
\end{lemma}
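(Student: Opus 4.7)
My strategy is to reduce the log-concavity of $\lambda$ to the Prékopa--Leindler inequality applied to its finite-dimensional marginals, and then pass to the infinite product by a weak-limit argument. For each $\ell\in\N$ the measure $\rho_\ell$ is the law of a positive scalar multiple of the random variable $X_\ell$ in Definition~\ref{def:besov_prior}, hence has Lebesgue density on $\R$ proportional to $\exp(-b_\ell|x|)$ for some $b_\ell>0$; since $x\mapsto b_\ell|x|$ is convex, this density is log-concave, and the Prékopa--Leindler inequality gives that $\rho_\ell$ itself is a log-concave probability measure on $\R$. For any $n\in\N$ the finite product $\lambda_n := \bigotimes_{\ell=1}^n \rho_\ell$ on $\R^n$ has density proportional to $\exp\bigl(-\sum_{\ell=1}^n b_\ell |x_\ell|\bigr)$, whose negative logarithm is again convex as a sum of convex functions, so another application of Prékopa--Leindler shows that each $\lambda_n$ is a log-concave Borel probability measure on $\R^n$.

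Next, I would pass from the finite products $\lambda_n$ to the infinite product $\lambda$, viewed as a Radon probability measure on the separable Banach space $X := B^t_1(\T^d)$ for some $t<s-d$ (via Lemma~\ref{lem:besov_lss}). I would introduce the pushforwards $\lambda^{(n)}$ of $\lambda_n$ under the partial-sum embedding $\iota_n : (x_1,\ldots,x_n)\mapsto \sum_{\ell=1}^n x_\ell\,\psi_\ell \in X$. Each $\lambda^{(n)}$ is then log-concave on $X$ as the image of a log-concave measure under a continuous linear map: for Borel $A, B \subset X$, the linearity of $\iota_n$ yields $\iota_n^{-1}(\alpha A + (1-\alpha) B) \supseteq \alpha\, \iota_n^{-1}(A) + (1-\alpha)\, \iota_n^{-1}(B)$, so the finite-dimensional inequality for $\lambda_n$ transfers to $\lambda^{(n)}$. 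By Lemma~\ref{lem:besov_lss}, the partial sums $U_n = \sum_{\ell=1}^n \ell^{-(s/d-1/2)} X_\ell\,\psi_\ell$ converge almost surely to $U$ in $X$, and hence $\lambda^{(n)}\to\lambda$ weakly on $X$.

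The principal obstacle is then to verify that log-concavity is preserved under this weak limit. Here I would invoke the classical result of Borell from the theory of convex measures: on a separable Banach space, the class of log-concave Radon probability measures is closed under weak convergence (see also Chapter~4 of Bogachev's monograph on Gaussian measures for the corresponding statement). Applied to the sequence $\lambda^{(n)}\to\lambda$, this immediately yields that $\lambda$ itself is log-concave, completing the proof.
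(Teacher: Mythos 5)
Your proposal is correct and follows essentially the same route as the paper: establish log-concavity of the finite-dimensional products from the log-concavity of the density $\exp(-\sum_\ell b_\ell|x_\ell|)$ (the paper cites Bogachev's theorem 1.8.4 where you cite Pr\'ekopa--Leindler, which is the same fact), and then pass to the infinite product via weak convergence using Borell's closure theorem for log-concave measures (the paper's citation of theorem 2.2 in \cite{CB74}). Your additional care in checking that the pushforward under the linear embedding $\iota_n$ preserves log-concavity, and in justifying the weak convergence via almost sure convergence of the partial sums, only makes explicit steps the paper treats as straightforward.
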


An immediate consequence of the last lemma, is that by proposition \ref{c:B-anderson}, the $B^s_1$-Besov prior satisfies Anderson's inequality. 
Notice that lemma \ref{lem:logconc} (and hence proposition \ref{c:B-anderson}) also holds for $B^s_p$-Besov measures with $p>1$, as defined in \cite{LSS09}. 


\subsection{Radon--Nikodym derivative $R^\mu_h$ and small ball probabilities}\label{ssec:rnsb}

Recall the definitions of quasi-invariance for a measure $\mu$ and of the subspace of directions in which $\mu$ is quasi-invariant, $Q(\mu)$.  In general the structure of $Q$ is not known and there are even examples of measures for which $Q$ fails to be locally convex \cite[exercise 5.5.2]{Boga10}. The space $Q$ is known to be a Hilbert space for certain families of measures, for example for $\alpha$-stable measures with $\alpha\geq 1$ and for countable products of a single distribution with finite Fisher information, see \cite[theorem 5.2.1]{Boga10}  and \cite{LS65} (note that the $B^s_1$-Besov prior, $\gl$, is not an $\alpha$-stable measure). Using similar techniques to \cite{LS65}, namely the Kakutani--Hellinger theory, we now show that $Q$ is a Hilbert space also for the $B^s_1$-Besov measure $\lambda$ and calculate the Radon-Nikodym derivative $R_h^\gl$ between $\gl$ and the shifted measure $\gl_h$ for $h\in Q(\gl)$.
\begin{lemma}\label{l:BesovRN}
For the $B^s_1$-Besov measure we have $Q(\gl)=B_2^{s-\frac{d}2}(\T^d)$. For $h\in Q(\gl)$ we have
$$
\frac{d\gl_h}{d\gl}(u)=\lim_{N\to\infty}\exp\sum_{\ell=1}^N\big( -\ga_\ell|h_\ell-u_\ell|+\ga_\ell|u_\ell| \big)
$$
in $L^1(\R^\infty,\gl)$, where $\ga_\ell=\ell^{s/d-1/2}$.
\end{lemma}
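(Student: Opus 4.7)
The plan is to exploit the fact that, in wavelet coordinates, $\lambda$ is a countable product of one-dimensional Laplace distributions, and to apply the Kakutani dichotomy for shifts of product measures.

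First, I would recast the problem in $\R^\infty$ using Notation~\ref{not:densities}: $\lambda=\bigotimes_\ell\rho_\ell$, where each $\rho_\ell$ has density $\tfrac{\alpha_\ell}{2}\exp(-\alpha_\ell|u_\ell|)$ with $\alpha_\ell=\ell^{s/d-1/2}$, and the shifted measure $\lambda_h$ is the product $\bigotimes_\ell (\rho_\ell)_{h_\ell}$. Each shifted marginal is equivalent to $\rho_\ell$ with density $\exp(\alpha_\ell|u_\ell|-\alpha_\ell|u_\ell-h_\ell|)$. Kakutani's theorem then asserts a dichotomy: $\lambda_h\ll\lambda$ (in which case $\lambda_h\sim\lambda$) if and only if the Hellinger affinity
\[
A(h):=\prod_{\ell=1}^\infty a_\ell(h_\ell),\qquad a_\ell(h_\ell):=\int\sqrt{\tfrac{d(\rho_\ell)_{h_\ell}}{d\rho_\ell}}\,d\rho_\ell,
\]
is strictly positive; otherwise $\lambda_h\perp\lambda$.

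Next, I would compute $a_\ell(h_\ell)$ explicitly by the substitution $v=\alpha_\ell u$ and splitting the integral at $0$ and at $\alpha_\ell h_\ell$. The three elementary integrals combine to give
\[
a_\ell(h_\ell)=\Bigl(1+\tfrac{\alpha_\ell|h_\ell|}{2}\Bigr)\exp\Bigl(-\tfrac{\alpha_\ell|h_\ell|}{2}\Bigr).
\]
Taking logarithms, positivity of $A(h)$ is equivalent to the summability of $f(\alpha_\ell|h_\ell|)$, where $f(x)=\tfrac{x}{2}-\log(1+\tfrac{x}{2})\ge 0$. Since $f(x)\sim x^2/8$ as $x\to 0$ and $f(x)\sim x/2$ as $x\to\infty$, summability of $f(\alpha_\ell|h_\ell|)$ forces $\alpha_\ell|h_\ell|\to 0$ and then becomes equivalent to $\sum_\ell \alpha_\ell^2 h_\ell^2<\infty$. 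A direct reading of the Besov norm definition gives $\sum_\ell \alpha_\ell^2 h_\ell^2=\sum_\ell \ell^{2s/d-1}h_\ell^2=\|h\|_{B_2^{s-d/2}}^2$, which identifies $Q(\lambda)$ with $B_2^{s-d/2}(\T^d)$ and in particular shows it is a Hilbert space.

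Finally, for $h\in Q(\lambda)$ the Radon--Nikodym derivative is obtained from the same Kakutani statement. Writing $M_N(u):=\prod_{\ell=1}^N \exp(\alpha_\ell|u_\ell|-\alpha_\ell|u_\ell-h_\ell|)$, the sequence $(M_N)$ is a positive $\lambda$-martingale with respect to the natural coordinate filtration and $\bbE_\lambda M_N=1$. Standard martingale convergence yields an $\lambda$-a.s. limit $M_\infty\ge 0$, and Kakutani's criterion $A(h)>0$ is precisely the statement that the convergence also takes place in $L^1(\lambda)$ with $\bbE_\lambda M_\infty=1$; the limit is then the claimed density $d\lambda_h/d\lambda$.

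The main obstacle is the second step: executing the one-dimensional integration for $a_\ell(h_\ell)$ cleanly, and then the asymptotic analysis of $f(x)$ at both $0$ and $\infty$ to show that the Hellinger summability condition matches exactly the weighted $\ell^2$ condition defining $B_2^{s-d/2}$. Once this correspondence is pinned down, the identification of $Q(\lambda)$ and the $L^1(\lambda)$-limit formula for the density both fall out of the Kakutani framework without additional work.
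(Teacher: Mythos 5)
Your proposal is correct and follows essentially the same route as the paper: both reduce to the Kakutani--Hellinger dichotomy for the product of shifted Laplace marginals, compute the per-coordinate Hellinger affinity $\bigl(1+\tfrac{\alpha_\ell|h_\ell|}{2}\bigr)\exp\bigl(-\tfrac{\alpha_\ell|h_\ell|}{2}\bigr)$, and show that summability of $\tfrac{x}{2}-\log\bigl(1+\tfrac{x}{2}\bigr)$ at $x_\ell=\alpha_\ell|h_\ell|$ is equivalent to the weighted $\ell^2$ condition defining $B^{s-d/2}_2(\T^d)$. The only cosmetic difference is that the paper cites the Kakutani--Hellinger theorem directly for the $L^1$ formula of the density, whereas you sketch the underlying martingale convergence argument.
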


We next provide a more detailed view of spaces of shifts $h$ for which $R_h^\gl$ has a continuous representative,
which we denote by $\tr_h^\gl(u)$. This is a crucial result for our study of weak MAP estimators, see lemma \ref{lem:cont_repres}, Remark \ref{rem:cont_repres} and the discussion in subsection \ref{ssec:mapint}.

\begin{lemma}\label{l:crh}
Let $h=\sum_{\ell\in\N}h_\ell\psi_\ell\in B^{\gs}_1(\T^d)$ with $\gs>s$. Then 
$\tr_h^\gl(u)=\exp\sum_{\ell=1}^\infty\big( -\ga_\ell|h_\ell-u_\ell|+\ga_\ell|u_\ell| \big)$ is continuous with respect to $u=\sum_{\ell\in\N}u_\ell\psi_\ell\in B^t_1(\T^d)$ for any $t<s-d$.
\end{lemma}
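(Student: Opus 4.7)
The approach is to apply the Weierstrass M-test: bound each summand uniformly in $u$ by a term $\alpha_\ell |h_\ell|$, show the dominating series is summable using $h \in B^r_1$ with $r > s$, verify termwise continuity in $u$, and then conclude by continuity of $\exp$.

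First I would dispose of summability. By the triangle inequality, each summand satisfies
\[
\big| {-}\alpha_\ell |h_\ell - u_\ell| + \alpha_\ell |u_\ell| \big| \le \alpha_\ell |h_\ell|,
\]
uniformly in $u$. Since $\alpha_\ell = \ell^{s/d-1/2}$ and $\|h\|_{B^r_1} = \sum_\ell \ell^{r/d - 1/2} |h_\ell|$, the hypothesis $r > s$ yields
\[
\sum_\ell \alpha_\ell |h_\ell| \;=\; \sum_\ell \ell^{(s-r)/d} \cdot \ell^{r/d - 1/2}|h_\ell| \;\le\; \|h\|_{B^r_1} \;<\; \infty.
\]
So the series in the exponent converges absolutely and uniformly in $u$, and the dominating series is the one required by the M-test.

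Next I would verify continuity of each summand $f_\ell(u) := -\alpha_\ell|h_\ell - u_\ell| + \alpha_\ell|u_\ell|$ when $u$ is viewed as an element of $B^t_1(\T^d)$ for $t < s - d$. The map $u \mapsto u_\ell$ is a bounded linear functional on $B^t_1$ (from the norm definition, $|u_\ell| \le \ell^{-t/d + 1/2} \|u\|_{B^t_1}$), hence continuous; composing with the Lipschitz function $x \mapsto -\alpha_\ell |h_\ell - x| + \alpha_\ell |x|$ on $\R$ gives continuity of $f_\ell$ on $B^t_1$. By the Weierstrass M-test applied to the continuous partial sums $S_N(u) = \sum_{\ell=1}^N f_\ell(u)$ with the summable majorant $\alpha_\ell|h_\ell|$, the limit $S(u) = \sum_{\ell=1}^\infty f_\ell(u)$ is continuous on $B^t_1$. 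Finally, $\tilde R_h^\lambda(u) = \exp S(u)$ is continuous on $B^t_1$ by continuity of $\exp$.

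The only genuinely delicate point is the summability step, whose outcome is dictated by the precise relation between the exponent $s/d - 1/2$ defining $\alpha_\ell$ and the one defining the $B^r_1$-norm. Since $r > s$ provides the strictly extra regularity needed to dominate $\alpha_\ell$ by the coefficients in the $B^r_1$-norm, the argument goes through; no cancellations or fine-scale estimates on $u$ are needed, and in particular the role of the hypothesis $t < s - d$ is only to guarantee that typical draws from $\lambda$ live in $B^t_1$ (via lemma \ref{lem:besov_lss}), not to establish continuity per se.
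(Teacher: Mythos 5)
Your proof is correct, but it takes a genuinely different and more elementary route than the paper's. The paper fixes $u$, takes $v$ with $\|v-u\|_{B^t_1}<\eta$, splits the index set into four pieces according to whether $|u_\ell|$ and $|v_\ell|$ exceed $|h_\ell|$, and uses H\"older interpolation on each piece to produce a quantitative bound of the form $|\tr_h^\gl(v)-\tr_h^\gl(u)|\lesssim \eta^{\delta}$ with constants involving $\|h\|_{B^{s+\delta_1(s-t)}_1}$ and $\|h\|_{B^{s+\delta_2}_1}$; this is precisely where the strict inequality $\gs>s$ is consumed, since one needs $\delta>0$ small enough that these slightly stronger norms of $h$ are finite. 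Your argument instead observes that the reverse triangle inequality kills the coupling between $h_\ell$ and $u_\ell$ termwise, $\bigl|{-}|u_\ell-h_\ell|+|u_\ell|\bigr|\le|h_\ell|$, so the Weierstrass M-test with majorant $\ga_\ell|h_\ell|$ (summable because $\sum_\ell\ga_\ell|h_\ell|=\|h\|_{B^s_1}\le\|h\|_{B^{\gs}_1}$) gives uniform convergence of the continuous partial sums and hence continuity of the limit. What the paper's proof buys is a H\"older modulus of continuity in $\|v-u\|_{B^t_1}$; what yours buys is simplicity and a strictly weaker hypothesis --- it only uses $\|h\|_{B^s_1}<\infty$, so it would prove the lemma for all $h\in B^s_1(\T^d)$, which would in turn upgrade corollary \ref{cor:BesovSubopt} and render theorem \ref{l:B1CM} an immediate special case ($u=0$) rather than requiring the separate approximation argument with $h^j\to h$. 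Since the lemma only claims continuity, your proof fully suffices for everything downstream (lemma \ref{lem:cont_repres}, theorem \ref{t:OMwMAP}); your closing observation that $t<s-d$ plays no role in the continuity argument itself is also accurate.
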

Note that in the expression for the Radon-Nikodym derivative $\tr_h^\gl(u)$, $h\in E$ and the less regular $u\in X$ are coupled component-wise (in the wavelet basis defining the Besov measure) and hence establishing the continuity of $\tr_h^\gl(u)$ with respect to $u$ is not straightforward. See section \ref{sec:proofs} for the proof of above lemma. 

We record the following immediate corollary of the last lemma and lemma \ref{lem:cont_repres}.
\begin{corollary}\label{cor:BesovSubopt}
Let $h\in B^\gs_1(\T^d)$, $\gs>s$. Then it holds that 
\begin{align}\label{e:c1}
\lim_{\epsilon\to0}\frac{\lambda_h(B_\epsilon(u))}{\lambda(B_\epsilon(u))}=\exp\sum_{\ell=1}^\infty(-\alpha_\ell|h_\ell-u_\ell|+\alpha_\ell|u_\ell|), 
\end{align}
for any $u\in B_1^t$.
\end{corollary}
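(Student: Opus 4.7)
The plan is to deduce the corollary directly from the two previous lemmas, essentially by specializing the abstract continuity statement of Lemma \ref{lem:cont_repres} to balls in $X = B^t_1(\T^d)$ and plugging in the explicit continuous representative from Lemma \ref{l:crh}.

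First I would check that $h$ lies in the admissible shift space so that Lemma \ref{l:BesovRN} applies. Since $h \in B^\gs_1(\T^d)$ with $\gs > s$, the Besov embedding $B^\gs_{1,1}(\T^d) \hookrightarrow B^{s-d/2}_{2,2}(\T^d)$ (which holds because $\gs - d > s - d$, so there is room for both the integrability and smoothness indices) yields $h \in B^{s-d/2}_2(\T^d) = Q(\lambda)$. Thus $\lambda_h \ll \lambda$ and $R^\lambda_h$ is well defined.

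Next I would apply Lemma \ref{lem:cont_repres} with $\mu=\lambda$ on the ambient Banach space $X = B^t_1(\T^d)$ for any fixed $t < s-d$ (on which $\lambda$ is supported by Lemma \ref{lem:besov_lss}), and with $A$ the closed unit ball of $X$, which is convex, bounded and symmetric. Then $A^\epsilon = \epsilon A$ is precisely $B_\epsilon(0)$ and $A^\epsilon + u = B_\epsilon(u)$, so the lemma gives
\[
\lim_{\epsilon\to 0}\frac{\lambda_h(B_\epsilon(u))}{\lambda(B_\epsilon(u))} = \tr^\lambda_h(u)
\]
for every $u\in X$, provided $R^\lambda_h$ admits a continuous representative $\tr^\lambda_h$. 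But exactly this is furnished by Lemma \ref{l:crh}: the explicit candidate
\[
\tr^\lambda_h(u) = \exp\sum_{\ell=1}^\infty\bigl(-\alpha_\ell|h_\ell - u_\ell| + \alpha_\ell|u_\ell|\bigr)
\]
is continuous in $u\in B^t_1(\T^d)$, and by Lemma \ref{l:BesovRN} it coincides $\lambda$-almost everywhere with $R^\lambda_h$. Combining the two identifications yields \eqref{e:c1}.

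There is essentially no obstacle beyond verifying these inputs; the only subtle point is the Besov embedding used to land $h$ inside $Q(\lambda)$, which is why the hypothesis $\gs > s$ (strict) is needed rather than $\gs \geq s$. All the genuine technical work has been absorbed into Lemmas \ref{lem:cont_repres}, \ref{l:BesovRN} and \ref{l:crh}, so the corollary is a one-line consequence.
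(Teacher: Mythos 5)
Your proposal is correct and follows essentially the same route as the paper, which presents this corollary as an immediate consequence of lemma \ref{l:crh} combined with lemma \ref{lem:cont_repres} (your additional check that $h\in Q(\lambda)$ via $B^\gs_1(\T^d)\hookrightarrow B^{s-d/2}_2(\T^d)$ is a worthwhile verification of the quasi-invariance hypothesis). One small inaccuracy in your closing remark: the embedding into $Q(\lambda)$ already holds for $\gs\geq s$; the strict inequality $\gs>s$ is actually needed for the continuity of the representative in lemma \ref{l:crh}, not for landing in $Q(\lambda)$.
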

\begin{remark}
Let $X=B^t_1(\T^d)$ for $t<s-d$. By remark \ref{rem:cont_repres} and noting that the Besov space $B^\gs_1(\T^d)$ is dense in $B^t_1(\T^d)$ for any $\gs>t$, the last corollary shows that it is natural to choose $E=B^\gs_1(\T^d)$ for any $\gs>s$ in the definition of wMAP estimate.
It also shows that the origin is the unique weak mode and, therefore, by proposition \ref{prop:symqcex} also the unique strong mode.
\end{remark}

We also remark that the series above consists of the difference of two terms, whose respective series are not convergent in general for $h\in B^\gs_1(\T^d)$. However, they coincide with the difference of the norms in $h\in B^\gs_1(\T^d)$ if $h$ and $u$ are elements of this smaller space.
Based on this view we close this section with an important building block for the study of the MAP estimate. It extends the last corollary to $h\in B^s_1(\T^d)$, for balls centered at the origin. 
\begin{theorem}\label{l:B1CM}
Suppose that $h\in B^s_1(\T^d)$ and $t<s-d$. Let $A\in\mathcal{B}(B^t_1(\T^d))$ denote a convex and zero-centered symmetric and bounded set. Then
$$
\lim_{\gep\to 0}\frac{\gl_h(\gep A)}{\gl(\gep A)}=\exp(-\|h\|_{B^s_1}).
$$
\end{theorem}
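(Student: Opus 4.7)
The plan is to reduce the statement for a general shift $h \in B^s_1(\T^d)$ to the higher-regularity situation $h^{(N)} \in B^\sigma_1(\T^d)$ with $\sigma > s$, where Lemma \ref{l:crh} and Lemma \ref{lem:cont_repres} already deliver the desired small-set asymptotics; the bridge between the two cases will be a uniform tail estimate on the Radon--Nikodym exponent supplied by the reverse triangle inequality. Concretely, write $h^{(N)} := \sum_{\ell = 1}^N h_\ell \psi_\ell$ for the wavelet truncation of $h$ at scale $N$. Since the basis is $r$-regular with $r$ large, $h^{(N)}$ is a finite linear combination of smooth wavelets and so lies in $B^\sigma_1(\T^d)$ for some $\sigma > s$. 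Lemma \ref{l:crh} then gives a continuous representative $\tr_{h^{(N)}}^\gl$ of $R_{h^{(N)}}^\gl$, and Lemma \ref{lem:cont_repres}, applied to the convex bounded symmetric set $A$ at $u = 0$, yields
\begin{equation*}
\lim_{\gep \to 0} \frac{\gl_{h^{(N)}}(\gep A)}{\gl(\gep A)} \;=\; \tr_{h^{(N)}}^\gl(0) \;=\; \exp\big( -\|h^{(N)}\|_{B^s_1}\big),
\end{equation*}
the last equality following from the explicit formula in Lemma \ref{l:crh} with all $u_\ell = 0$.

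Next, I would note that the standard Besov embedding $B^s_1(\T^d) \hookrightarrow B^{s-d/2}_2(\T^d) = Q(\gl)$ (equivalent to the elementary $\ell^1 \subset \ell^2$ inclusion applied to the weighted coefficients $(\alpha_\ell h_\ell)$) places $h$ itself in the admissible set of shifts for Lemma \ref{l:BesovRN}. Because $\sum_\ell \alpha_\ell |h_\ell| = \|h\|_{B^s_1} < \infty$, the reverse triangle inequality supplies the bound $|-\alpha_\ell|u_\ell - h_\ell| + \alpha_\ell|u_\ell|| \leq \alpha_\ell |h_\ell|$ uniformly in $u$, so the partial sums actually converge pointwise everywhere to a bounded limit, which must then agree $\gl$-a.s. with the $L^1(\gl)$-limit identifying $R_h^\gl$. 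Splitting the exponent at index $N$ and using the tail bound
\begin{equation*}
\Big| \sum_{\ell > N} \big( -\alpha_\ell |u_\ell - h_\ell| + \alpha_\ell |u_\ell| \big) \Big| \;\leq\; \sum_{\ell > N} \alpha_\ell |h_\ell| \;=:\; r_N,
\end{equation*}
which is uniform in $u$, I obtain the pointwise sandwich $e^{-r_N} R_{h^{(N)}}^\gl(u) \leq R_h^\gl(u) \leq e^{r_N} R_{h^{(N)}}^\gl(u)$ for $\gl$-a.e.\ $u$. Integrating over $\gep A$ against $\gl$ and dividing by $\gl(\gep A) > 0$ gives
\begin{equation*}
e^{-r_N}\, \frac{\gl_{h^{(N)}}(\gep A)}{\gl(\gep A)} \;\leq\; \frac{\gl_h(\gep A)}{\gl(\gep A)} \;\leq\; e^{r_N}\, \frac{\gl_{h^{(N)}}(\gep A)}{\gl(\gep A)}.
\end{equation*}

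Finally, I would send $\gep \to 0$ with $N$ held fixed, invoking the first step to sandwich $\liminf$ and $\limsup$ of the central ratio between $e^{\pm r_N}\exp(-\|h^{(N)}\|_{B^s_1})$, and then let $N \to \infty$; the tail $r_N$ decays to zero by absolute summability, while $\|h^{(N)}\|_{B^s_1} = \sum_{\ell \leq N} \alpha_\ell |h_\ell| \nearrow \|h\|_{B^s_1}$ by monotone convergence, so both outer bounds collapse to $\exp(-\|h\|_{B^s_1})$. The main obstacle that this two-step scheme circumvents is precisely the discontinuity of $\tr_h^\gl$ when $h$ lives only in $B^s_1$: Lemma \ref{l:crh} genuinely requires the strict gap $\sigma > s$, but the uniform-in-$u$ tail bound lets one evaluate the limit at the single center $u = 0$ without ever needing pointwise continuity of the Radon--Nikodym derivative at arbitrary $u \in B^t_1$.
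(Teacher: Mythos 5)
Your proposal is correct and takes essentially the same route as the paper: both arguments reduce the general shift $h\in B^s_1(\T^d)$ to the smoother-shift case already covered by lemmas \ref{l:crh} and \ref{lem:cont_repres}, using the explicit product form of $R^\gl_h$ from lemma \ref{l:BesovRN} and the componentwise triangle inequality to control the error by $\exp\left(\pm\|h-h^{\rm approx}\|_{B^s_1}\right)$. The only cosmetic differences are that you take the wavelet truncation $h^{(N)}$ as the approximant and run a symmetric two-sided sandwich, whereas the paper gets the lower bound directly from the pointwise estimate $R^\gl_h(u)\ge \e^{-\|h\|_{B^s_1}}$ and uses an arbitrary approximating sequence in $B^{s+1}_1(\T^d)$ only for the upper bound.
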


It follows immediately from the above lemma that for $z_1,z_2\in B^s_1(\T^d)$,
\begin{align}\label{e:c2}
\lim_{\epsilon\to0}\frac{\lambda(B_\epsilon(z_1))}{\lambda(B_\epsilon(z_2))}=\exp(-\|z_1\|_{B^s_1}+\|z_2\|_{B^s_1}),
\end{align}
giving the Onsager--Machlup functional of $\gl$. The space $B^s_1(\T^d)$ here, is the largest space on which the Onsager--Machlup functional is defined. This is the space $F$ of the discussion in subsection \ref{ssec:mapint} in the case of Besov priors.

It is also worth noting the difference between (\ref{e:c1}) and (\ref{e:c2}). The centres of the balls in (\ref{e:c1}) are in $X=B^t_1(\T^d)$, the continuity of the right-hand side in $u$ is due to sufficient regularity of the shift $h$. 
In (\ref{e:c2}), the centres of the balls are more regular than (\ref{e:c1}), but the shift is less regular in general, i.e.\ only in $B^s_1(\T^d)$.

\section{Main results}\label{sec:bip}

We are now ready to present our main results. We first show the existence of weak and strong MAP estimates in Bayesian inverse problems with $B^s_1$-Besov priors, and identify both with the minimisers of the Onsager--Machlup functional. Using this characterization, we then prove a weak consistency result for MAP estimators.

\subsection{Identification of MAP estimates for Bayesian inverse problems}

We consider the inverse problem of estimating a function $u \in X=B^t_1(\T^d)$ from a noisy and indirect observation $y\in \R^J$, modelled as 
\begin{equation}
	\label{eq:inverse_problem}
	y = \G(u) + \xi.
\end{equation}
Here $\G : B^t_1(\T^d) \to \R^J$ is a locally Lipschitz continuous, possibly non-linear operator and $\xi$ is Gaussian observational noise in $\R^J$, $\xi\sim N(0,\Sigma)$ for a positive definite covariance matrix $\Sigma\in \R^{J\times J}$.

 We assume that $u$ is distributed according to the $B^s_1$-Besov measure $\lambda$ defined in Section \ref{sec:besov-def}, so that $\lambda(B^t_1(\T^d))=1$ for $t<s-d$. Under the assumption of local Lipschitz continuity of $\G$, it follows that almost surely with respect to $y$ the posterior distribution $\mu^y$ on $u|y$, has the following Radon--Nikodym derivative with respect to the prior $\lambda$: 
\begin{equation}\label{e:muy}
	\frac{d\mu^y}{d\lambda}(u)=\frac{1}{Z(y)}\exp(-\Phi(u; y)),	
\end{equation}
where
\begin{equation}\label{e:Phi}
	\Phi(\cdot ; y) = \frac 12 \left|\Sigma^{-\frac 12} (y-\G(\cdot))\right|^{2} 
\end{equation} 
and $Z(y)$ is the normalization constant. Indeed, local Lipschitz continuity of $\G$ implies measurability of $\Phi(\cdot, y)$ with respect to $\lambda$; it also, together with non-negativity of $\Phi$, gives the finiteness and non-singularity of $Z$. For details see \cite{DS15}. 

Following the intuition described in subsection \ref{ssec:mapint}, we define the Tikhonov-type functional
\begin{equation}
	\label{eq:I}
	I(u; y) := \begin{cases}
	\Phi(u; y) + \norm{u}_{B^s_1}, & u \in B^s_1(\T^d), \\
	\infty & {\rm otherwise.}
	\end{cases}
\end{equation}
The existence of minimizers for $I$ is classical, however, we include the proof for completeness.
\begin{lemma}\label{l:OMmin}
The functional $I(\cdot; y)$ in equation \eqref{eq:I} has a minimizer $\hat u \in B^s_1(\T^d)$.
\end{lemma}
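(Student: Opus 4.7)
I would apply the direct method of the calculus of variations. Note first that $I(\cdot;y) \geq 0$ and $I(0;y) = \Phi(0;y) < \infty$, so $m := \inf_{u \in X} I(u;y)$ lies in $[0,\infty)$. Choose a minimizing sequence $(u_n) \subset B^s_1(\T^d)$ with $I(u_n;y) \to m$. Since $\Phi \geq 0$, we have $\|u_n\|_{B^s_1} \leq I(u_n;y) \leq M$ uniformly in $n$ for some constant $M$.

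Write $u_n = \sum_\ell c_\ell^{(n)} \psi_\ell$. The uniform bound forces each scalar sequence $(c_\ell^{(n)})_n$ to be bounded, so a diagonal extraction produces a subsequence (not relabelled) with $c_\ell^{(n)} \to \hat c_\ell$ for every $\ell \in \N$. Set $\hat u := \sum_\ell \hat c_\ell \psi_\ell$. Applying Fatou's lemma to the weighted $\ell^1$-sum defining $\|\cdot\|_{B^s_1}$ yields
\[ \|\hat u\|_{B^s_1} = \sum_\ell \ell^{s/d - 1/2} |\hat c_\ell| \leq \liminf_n \|u_n\|_{B^s_1} \leq M, \]
so $\hat u \in B^s_1(\T^d) \subset B^t_1(\T^d) = X$. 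To pass to the limit in $\Phi$ I establish strong convergence in $X$ via the compact-embedding splitting: for any $N \in \N$,
\[ \|u_n - \hat u\|_{B^t_1} \leq \sum_{\ell \leq N} \ell^{t/d - 1/2} |c_\ell^{(n)} - \hat c_\ell| + N^{(t-s)/d} \sum_{\ell > N} \ell^{s/d - 1/2} |c_\ell^{(n)} - \hat c_\ell|, \]
where I used $\ell^{t/d-1/2} = \ell^{(t-s)/d}\cdot \ell^{s/d-1/2}$ and $\ell^{(t-s)/d} \leq N^{(t-s)/d}$ for $\ell > N$. Letting $n \to \infty$ (first sum $\to 0$, being finite) and then $N \to \infty$ (second term $\leq 2M\cdot N^{(t-s)/d} \to 0$, since $t < s - d < s$) gives $u_n \to \hat u$ in $B^t_1(\T^d)$.

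Finally, local Lipschitz continuity of $\G$ on the $B^t_1$-bounded set containing $(u_n)$ and $\hat u$ yields $\Phi(u_n;y) \to \Phi(\hat u;y)$, so
\[ I(\hat u;y) \leq \lim_n \Phi(u_n;y) + \liminf_n \|u_n\|_{B^s_1} \leq \liminf_n I(u_n;y) = m, \]
and $\hat u$ is a minimizer. The only mild subtlety is that $B^s_1$ is isomorphic to a weighted $\ell^1$-space and is therefore non-reflexive, so standard weak compactness of bounded sets does not apply directly; the coefficient-wise diagonal extraction combined with Fatou on the explicit weighted norm is the natural substitute, and it works precisely because the Besov scale is diagonal in the wavelet basis.
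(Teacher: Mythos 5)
Your proof is correct and follows essentially the same route as the paper: bounded minimizing sequence, extraction of a limit candidate, lower semicontinuity of the $B^s_1$-norm, strong convergence in $B^t_1$ via the identical two-term splitting of the weighted $\ell^1$-sum, and continuity of $\Phi$. Your diagonal extraction plus Fatou is just the concrete, coefficient-wise form of the paper's appeal to Banach--Alaoglu and weak*-lower semicontinuity of the norm, so the two arguments coincide in substance.
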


The underpinning of our main results in theorems \ref{t:OMwMAP} and \ref{c:sMAP} is that the posterior inherits the property of the prior given in lemma \ref{l:crh}, i.e. there exists a subspace, where the limit of the translated small ball probability ratios has a continuous representative.
Moreover, one can show how this limit is connected with functional $I$.
The next result follows directly by
combining lemmas \ref{lem:cont_repres}, \ref{l:crh} and \ref{l:B1CM}, and the local Lipschitz continuity of $\Phi$.

\begin{proposition}
\label{prop:limit_is_onsager}
Let $A\in\mathcal{B}(B^t_1(\T^d))$ be convex, bounded and symmetric and define $A^\diam:=\diam A$. 
\begin{itemize}
\item[i)] For any $h\in B^\gs_1(\T^d)$ with $\gs>s$ the mapping
$$
u\mapsto \lim_{\gep\to 0}\frac{\post_h(A^\diam+u)}{\post(A^\diam+u)}
$$
is a continuous function of $u\in B^t_1(\T^d)$.
\item[ii)]The Tikhonov-type functional defined in \eqref{eq:I} is the generalized Onsager--Machlup functional for the posterior $\post$ in \eqref{e:muy}. That is, for any $z_1, z_2\in B^s_1(\T^d)$, 
\[\lim_{\epsilon\to0}\frac{\post(A^\diam+z_2)}{\post(A^\diam+z_1)}=\exp(I(z_1;y)-I(z_2;y)).\]
\end{itemize}
\end{proposition}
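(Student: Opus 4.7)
The plan is to exploit the explicit Radon--Nikodym representation $d\post/d\lambda = Z(y)^{-1}\exp(-\Phi(\cdot;y))$ to reduce both limits in the proposition to prior limits (to which Lemmas \ref{lem:cont_repres}, \ref{l:crh}, \ref{l:B1CM} directly apply), multiplied by a factor coming from the continuous functional $\Phi$. In every step the heuristic is the same: on a small set $A^\diam + u$, the function $\Phi$ is nearly constant and equal to its value at the center, so we can pull it out of the integral.

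For part (i), I would first rewrite
\[
\post_h(A^\diam+u)=\post(A^\diam+u-h)=\frac{1}{Z(y)}\int_{A^\diam+u-h}e^{-\Phi(v;y)}\,d\lambda(v),
\]
and analogously for $\post(A^\diam+u)$. Since $\G$, and therefore $\Phi(\cdot;y)$, is locally Lipschitz on $B^t_1(\T^d)$, and since both $A^\diam+u$ and $A^\diam+u-h$ sit inside a bounded $B^t_1$-ball whose diameter shrinks with $\diam$, we may sandwich each integral between $\exp(-\sup\Phi)\cdot\lambda(\cdot)$ and $\exp(-\inf\Phi)\cdot\lambda(\cdot)$, where the sup/inf is taken over the relevant shifted ball. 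As $\diam\to 0$ both bounds converge to $\exp(-\Phi(u-h;y))\lambda(A^\diam+u-h)$ and $\exp(-\Phi(u;y))\lambda(A^\diam+u)$ respectively. Taking the ratio and passing to the limit,
\[
\lim_{\diam\to 0}\frac{\post_h(A^\diam+u)}{\post(A^\diam+u)}=\exp\bigl(\Phi(u;y)-\Phi(u-h;y)\bigr)\cdot\lim_{\diam\to 0}\frac{\lambda_h(A^\diam+u)}{\lambda(A^\diam+u)}.
\]
For $h\in B^{\gs}_1(\T^d)$ with $\gs>s$, Lemma \ref{l:crh} supplies a continuous representative $\tr_h^\lambda$ of $R^\lambda_h$, and Lemma \ref{lem:cont_repres} identifies the prior limit with $\tr_h^\lambda(u)$. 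The product of the continuous functions $u\mapsto\exp(\Phi(u;y)-\Phi(u-h;y))$ and $u\mapsto\tr_h^\lambda(u)$ is then continuous on $B^t_1(\T^d)$, proving (i).

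For part (ii), I would apply the same sandwiching step separately to $\post(A^\diam+z_2)$ and $\post(A^\diam+z_1)$, obtaining
\[
\lim_{\diam\to 0}\frac{\post(A^\diam+z_2)}{\post(A^\diam+z_1)}=\exp\bigl(\Phi(z_1;y)-\Phi(z_2;y)\bigr)\cdot\lim_{\diam\to 0}\frac{\lambda(A^\diam+z_2)}{\lambda(A^\diam+z_1)}.
\]
To handle the prior ratio, rewrite $\lambda(A^\diam+z_j)=\lambda_{-z_j}(A^\diam)$ using the definition of the translated measure; since $z_j\in B^s_1(\T^d)$ and $\|{-z_j}\|_{B^s_1}=\|z_j\|_{B^s_1}$, Theorem \ref{l:B1CM} gives $\lambda_{-z_j}(A^\diam)/\lambda(A^\diam)\to\exp(-\|z_j\|_{B^s_1})$. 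Dividing the two limits,
\[
\lim_{\diam\to 0}\frac{\lambda(A^\diam+z_2)}{\lambda(A^\diam+z_1)}=\exp\bigl(\|z_1\|_{B^s_1}-\|z_2\|_{B^s_1}\bigr),
\]
and combining with the $\Phi$-factor recovers exactly $\exp(I(z_1;y)-I(z_2;y))$, which is the claim.

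The only point requiring genuine care is the sandwiching argument for pulling $e^{-\Phi}$ outside the integrals. It works because $\Phi(\cdot;y)$ is continuous on $B^t_1(\T^d)$ (via local Lipschitz continuity of $\G$), so its oscillation over the bounded shifted set $A^\diam+u-h$ (or $A^\diam+z_j$) tends to zero as $\diam\to 0$; this uniform-continuity-on-a-bounded-set remark is what keeps the $\Phi$-factor and the prior factor cleanly separated, so that the known prior limits from Lemmas \ref{lem:cont_repres}, \ref{l:crh} and \ref{l:B1CM} can be inserted directly. Everything else is bookkeeping with shifts of measures.
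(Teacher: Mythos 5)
Your argument is correct and is exactly the route the paper intends: the paper gives no separate proof of this proposition, stating only that it ``follows directly by combining lemmas \ref{lem:cont_repres}, \ref{l:crh} and \ref{l:B1CM}, and the local Lipschitz continuity of $\Phi$,'' and your sandwiching of $e^{-\Phi}$ over the shrinking sets is precisely the standard way that combination is carried out. The reduction of the posterior ratios to the prior ratios, the identification of the prior limits via $\tr_h^\gl$ and Theorem \ref{l:B1CM} (with the observation $\lambda(A^\diam+z_j)=\lambda_{-z_j}(A^\diam)$), and the recombination with the $\Phi$-factor into $I$ all check out.
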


The main theorems below show that the weak and strong MAP estimates of the posterior $\post$ in \eqref{e:muy}, are identified with minimisers of functional $I$. In particular, the weak and strong MAP estimates coincide for the inverse problems considered here. 
This is remarkable since it is not known in general under which conditions there exists weak MAP estimates that are not strong.

\begin{theorem}\label{t:OMwMAP}
An element $u \in B^s_1(\T^d)$ minimizes $I(\cdot; y)$ if and only if it is a weak MAP estimate for the posterior measure $\post$ in \eqref{e:muy}.
\end{theorem}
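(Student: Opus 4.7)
The plan is to combine the Onsager--Machlup formula of Proposition \ref{prop:limit_is_onsager}(ii) with a density/approximation argument in $B^s_1(\T^d)$. I would fix once and for all $E=B^\gs_1(\T^d)$ for some $\gs>s$ as the dense subspace used in Definition \ref{def:wmap}: this is dense in $X=B^t_1(\T^d)$, and by Proposition \ref{prop:limit_is_onsager}(i) the small-ball-probability ratio defining the weak MAP has a well-defined pointwise limit for every $h\in E$. Choosing $A$ to be the unit ball of $X$ (so that $A^\diam+v=B_\diam(v)$), Proposition \ref{prop:limit_is_onsager}(ii) specialises to
$$\lim_{\epsilon\to 0}\frac{\post(B_\epsilon(z_2))}{\post(B_\epsilon(z_1))}=\exp\bigl(I(z_1;y)-I(z_2;y)\bigr),\qquad z_1,z_2\in B^s_1(\T^d).$$

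For the forward implication, suppose $u\in B^s_1$ minimises $I(\cdot;y)$. For any $h\in E$ we have $h\in B^s_1$, hence $u-h\in B^s_1$; applying the displayed formula with $z_1=u$ and $z_2=u-h$ yields that the weak MAP ratio equals $\exp(I(u;y)-I(u-h;y))\le 1$ by minimality. Together with the fact that the Besov prior has full support in $B^t_1$ (so that $u\in{\rm supp}(\mu^y)$), this verifies Definition \ref{def:wmap}.

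For the converse, assume $u\in B^s_1$ is a weak MAP estimate. The same application of the OM formula together with the weak MAP inequality gives $I(u;y)\le I(u-h;y)$ for every $h\in B^\gs_1(\T^d)$. To upgrade this to minimality over all $v\in B^s_1(\T^d)$, I would approximate $u-v\in B^s_1$ by a sequence $h_n\in B^\gs_1$ with $h_n\to u-v$ in $B^s_1$; such a sequence exists because $B^\gs_1$ is dense in $B^s_1$ (for instance, via wavelet truncations). Then $u-h_n\to v$ in $B^s_1$, and hence also in $B^t_1$ by the continuous embedding. Since $\Phi(\cdot;y)$ is locally Lipschitz on $B^t_1$ and $\|\cdot\|_{B^s_1}$ is continuous on $B^s_1$, one obtains $I(u-h_n;y)\to I(v;y)$; passing to the limit in $I(u;y)\le I(u-h_n;y)$ yields $I(u;y)\le I(v;y)$, which is the claimed minimality.

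The main subtlety is this last closure step. The weak MAP condition, via the OM formula, only directly bounds $I(u;y)$ against $I$-values on the affine slice $u+B^\gs_1$, and one must invoke both the density of $B^\gs_1$ in $B^s_1$ and the continuity of $I$ on $B^s_1$ to upgrade to general $v\in B^s_1$. No separate argument is needed to rule out weak MAPs lying outside $B^s_1$, since the theorem only asserts the equivalence for $u$ already assumed to lie in $B^s_1(\T^d)$.
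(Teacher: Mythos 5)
Your argument for the stated biconditional is correct and follows essentially the same route as the paper's: the forward direction is the pointwise evaluation of the Onsager--Machlup ratio from Proposition \ref{prop:limit_is_onsager}(ii) at a minimizer, and the converse uses exactly the paper's closure step, namely density of $B^\gs_1(\T^d)$ in $B^s_1(\T^d)$ together with continuity of $I$ on $B^s_1(\T^d)$ to pass from the affine slice $u+B^\gs_1(\T^d)$ to all of $B^s_1(\T^d)$. The one substantive divergence is your closing claim that no argument is needed to exclude weak MAP estimators lying outside $B^s_1(\T^d)$. The paper's proof of this theorem spends roughly half its length on precisely that exclusion: assuming a weak MAP $\hat u\in B^t_1(\T^d)\setminus B^s_1(\T^d)$, it tests the weak MAP inequality against the truncated shifts $h^N=\epsilon\sum_{\ell\le N}\hat u_\ell\psi_\ell\in B^\gs_1(\T^d)$ and derives $\epsilon\sum_{\ell=1}^N\alpha_\ell|\hat u_\ell|\le|\Phi(\hat u-h^N)-\Phi(\hat u)|\le C\epsilon$ uniformly in $N$, contradicting $\hat u\notin B^s_1(\T^d)$. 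Your literal reading of the statement is defensible, but this extra content is not decorative: the theorem is invoked in the proof of Theorem \ref{c:sMAP} to conclude that a strong MAP estimator $\tilde z$ --- a priori only an element of $B^t_1(\T^d)$ --- minimizes $I$, and that deduction, as well as the paper's headline claim that weak and strong MAP estimates coincide, requires knowing that every weak MAP automatically lies in $B^s_1(\T^d)$. If your proof is meant to support those downstream uses, you should reinstate this regularity step.
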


We note that the last result implies the existence of weak MAP estimates by lemma \ref{l:OMmin}.
We next show the existence of strong MAP estimators, and that any strong MAP estimate is a minimiser of $I$.

\begin{proposition}\label{t:sMAP}

Consider the measure $\mu^y$ given by (\ref{e:muy}) and (\ref{e:Phi}) with the $B^s_1$-Besov prior $\gl$ and $\G:B^t_1(\T^d)\to \R^J$ locally Lipschitz for $t<s-d$. 
\begin{itemize}
\item[i)] For any $\gd>0$ there exists $z^\gd\in B^t_1(\T^d)$ satisfying $z^\gd=\arg\max_{z\in X}\mu^y(A^\gd+z)$, where $A^\delta:=\delta A$ with $A$ a convex, symmetric and bounded set in $B^t_1(\T^d)$.
\item[ii)] There is a $\bar z\in B^s_1(\T^d)$ and a subsequence of $\{z^\gd\}_{\gd>0}$ which converges to $\bar{z}$ strongly in $B^t_1(\T^d)$.
\item[iii)] The limit $\bar z$ is a strong MAP estimator and a minimizer of $I(u;y)$ in equation \eqref{eq:I}.
\end{itemize}
\end{proposition}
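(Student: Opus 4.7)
My plan is to prove the three parts in order, using as main tools the Onsager--Machlup identification of proposition \ref{prop:limit_is_onsager}, the Besov Radon--Nikodym computations of lemma \ref{l:BesovRN} and theorem \ref{l:B1CM}, and the tightness of $\gl$ on the Polish space $X = B^t_1(\T^d)$.

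For part (i), I fix $\gd > 0$ and study $F_\gd(z) := \post(A^\gd + z)$ on $X$. Taking $A$ closed (which we may assume without loss of generality, since the relevant theorems apply to any Borel convex symmetric bounded set), the Portmanteau theorem yields upper semicontinuity of $F_\gd$. Coercivity follows from tightness of $\gl$: for any $\eta > 0$ there is a compact $K$ with $\gl(K^c) < \eta Z(y)$, so $F_\gd(z) \leq \gl(A^\gd + z)/Z(y) < \eta$ whenever $z$ lies outside the bounded set $K - A^\gd$. Hence $M^\gd := \sup_z F_\gd(z) > 0$ is attained on a bounded subset of $X$. The subtle step is turning $B^t_1$-boundedness of a maximizing sequence $\{z_n\}$ into precompactness; I would do this by upgrading the bound to the smaller $B^s_1(\T^d)$. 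Combining $F_\gd(z_n) \geq F_\gd(0)/2$ with the Radon--Nikodym formula of lemma \ref{l:BesovRN} and the Onsager--Machlup asymptotic of theorem \ref{l:B1CM}, together with a local Lipschitz bound on $\Phi$ over the bounded set $A^\gd + z_n$, maximality forces $\|z_n\|_{B^s_1} \leq C(\gd, y)$. The compact embedding $B^s_1 \hookrightarrow B^t_1$ then extracts a strongly convergent subsequence, and upper semicontinuity concludes $z^\gd$ is a maximizer.

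For part (ii), I take $\gd_i \downarrow 0$ and $z_i := z^{\gd_i}$. The main analytical hurdle is upgrading the $B^s_1$-bound to one uniform in $i$. Using maximality $F_{\gd_i}(z_i) \geq F_{\gd_i}(0)$ and theorem \ref{l:B1CM} (with the role of $\gep$ played by $\gd_i$, and the $\Phi$-factor handled by local Lipschitz continuity), I would extract $\|z_i\|_{B^s_1} \leq C$ independent of $i$. The compact embedding then produces a subsequence $z_{i_k} \to \bar z$ strongly in $B^t_1$, and $\bar z \in B^s_1$ follows from the lower semicontinuity of the Besov norm under coefficient-wise convergence (Fatou applied to the weighted $\ell^1$ series defining the norm).

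For part (iii), I combine (ii) with the Onsager--Machlup identity. For any $v \in B^s_1(\T^d)$, maximality yields $F_{\gd_i}(v) \leq F_{\gd_i}(z_i)$, whence
\begin{equation*}
\exp(I(\bar z; y) - I(v; y)) = \lim_{i} \frac{F_{\gd_i}(v)}{F_{\gd_i}(\bar z)} \leq \limsup_{i} \frac{F_{\gd_i}(z_i)}{F_{\gd_i}(\bar z)},
\end{equation*}
the left-hand limit coming from part ii) of proposition \ref{prop:limit_is_onsager} applied to the pair $(v, \bar z)$ in $B^s_1 \times B^s_1$. The remaining task is to show $\limsup_i F_{\gd_i}(z_i)/F_{\gd_i}(\bar z) \leq 1$, which is a moving-centre versus fixed-centre small-ball comparison and the main technical obstacle of the proof. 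I plan to address it by applying part i) of proposition \ref{prop:limit_is_onsager} to the shift $h = z_i - \bar z$ when its regularity allows ($h \in B^\gs_1$ with $\gs > s$), and otherwise via a refinement coupling the $B^t_1$-rate of $z_i \to \bar z$ with the uniform $B^s_1$-bound on $\{z_i\}$ and local Lipschitz bounds on $\Phi$. This yields $I(\bar z; y) \leq I(v; y)$ for every $v \in B^s_1$, identifying $\bar z$ as a minimizer of $I$. The strong MAP property then follows from $F_{\gd_i}(z_i) = M^{\gd_i}$ and $F_{\gd_i}(\bar z)/F_{\gd_i}(z_i) \to 1$, verifying the defining limit in definition \ref{def:map}.
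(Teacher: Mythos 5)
Your proposal reproduces the correct high-level skeleton (maximize at fixed $\gd$, extract a limit, compare with an arbitrary $v\in B^s_1$ via the Onsager--Machlup asymptotics), but each of the three genuinely hard steps is left unsupported, and in each case the paper does something substantively different.

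First, in part (i) you claim that a maximizing sequence at \emph{fixed} $\gd$ satisfies $\|z_n\|_{B^s_1}\le C(\gd,y)$, citing lemma \ref{l:BesovRN} and theorem \ref{l:B1CM}. Neither applies: lemma \ref{l:BesovRN} concerns shifts $h\in Q(\gl)=B^{s-d/2}_2(\T^d)$, and theorem \ref{l:B1CM} is an asymptotic statement as $\gep\to 0$ for a shift already assumed to lie in $B^s_1(\T^d)$. Nothing forces the maximizer at fixed $\gd$ to be in $B^s_1$, and the proposition itself only asserts $z^\gd\in B^t_1(\T^d)$. Without the $B^s_1$ bound you have no precompactness, and your existence argument collapses. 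The paper instead takes a $B^t_1$-bounded maximizing sequence (boundedness from lemma \ref{l:limB}), extracts a weak-$*$ limit by Banach--Alaoglu, and shows the supremum is attained there using the finite-dimensional projections $P_n$ together with the continuity-set lemma \ref{l:bdrym}.

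Second, in part (ii) the uniform-in-$i$ bound $\|z_i\|_{B^s_1}\le C$ obtained "with the role of $\gep$ played by $\gd_i$" in theorem \ref{l:B1CM} is circular: that theorem presupposes membership in $B^s_1$, which is exactly what you are trying to prove, and it does not accommodate a centre that moves with $\gep$. The paper never establishes such a uniform bound; it proves only that the weak-$*$ limit $\bar z$ lies in $B^s_1$ and that the convergence is strong in $B^t_1$, and it does so by contradiction via two dedicated projection estimates: lemma \ref{l:limB-nE} (if $\bar z\notin B^s_1$ the ratio $\gl(A^\gd+z^\gd)/\gl(A^\gd)$ is driven to zero, contradicting \eqref{e:precon}) and lemma \ref{l:limB-wc} (the same if the convergence is only weak-$*$). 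These two lemmas are the technical core of the proof and have no counterpart in your plan.

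Third, in part (iii) you correctly isolate $\limsup_i F_{\gd_i}(z_i)/F_{\gd_i}(\bar z)\le 1$ as the crux, but the proposed resolution does not work: the shift $z_i-\bar z$ is only small in $B^t_1$ and is generically not in $B^\gs_1$ for $\gs>s$, so proposition \ref{prop:limit_is_onsager}(i) is inapplicable, and "a refinement coupling the rates" is a placeholder rather than an argument. The paper handles this moving-centre comparison explicitly: it introduces smooth approximants $w^j\in B^{s+1}_1(\T^d)$ with $w^j\to\bar z$ in $B^s_1$, bounds $\gl(A^\gd+z^\gd)/\gl(A^\gd+\bar z)$ by a supremum of the continuous representative $\tr^\gl_{w^j}$ over the shrinking set $A^\gd+z^\gd+w^j$ (using Anderson's inequality to discard the remaining measure ratio), and then sends $\gd\to 0$ using lemma \ref{l:crh} and $j\to\infty$. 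A parallel computation against the minimizer $z^*$ of $I$ then yields the minimization property by contradiction. Until the three steps above are supplied, the proposal is an outline of the statement rather than a proof.
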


In the following theorem we prove that any minimizer of $I$ is a strong MAP estimate and, hence, show the identification of strong MAP estimates and minimizers of $I$ using part (iii) of proposition \ref{t:sMAP}.

\begin{theorem}\label{c:sMAP}
Suppose that conditions of proposition \ref{t:sMAP} hold. Then the strong MAP estimators of $\mu^y$ are characterised by the minimizers of the Onsager--Machlup functional $I$ given in \eqref{eq:I}.
\end{theorem}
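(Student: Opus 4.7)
The plan is to combine proposition \ref{t:sMAP}(iii), which already produces at least one point that is simultaneously a strong MAP estimator and a minimizer of $I(\cdot;y)$, with the Onsager--Machlup ratio formula in proposition \ref{prop:limit_is_onsager}(ii), to promote any other minimizer of $I$ to a strong MAP estimator. Proposition \ref{t:sMAP} already gives one direction of the characterization (every strong MAP produced as a limit of maximizers on shrinking symmetric convex sets is a minimizer of $I$), so what remains is the converse: if $u^\ast\in B^s_1(\T^d)$ minimizes $I(\cdot;y)$, then $u^\ast$ is a strong MAP estimator in the sense of definition \ref{def:map}.

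First I would fix $u^\ast\in B^s_1(\T^d)$ with $I(u^\ast;y)=\inf I(\cdot;y)$, and let $\bar z\in B^s_1(\T^d)$ be the strong MAP estimator guaranteed by proposition \ref{t:sMAP}(iii), which is also a minimizer of $I$. Hence $I(u^\ast;y)=I(\bar z;y)$. Choose $A$ to be the closed unit ball of $X=B^t_1(\T^d)$; this is convex, symmetric and bounded, so $A^\epsilon+u=B_\epsilon(u)$ falls into the framework of proposition \ref{prop:limit_is_onsager}. Applying part (ii) of that proposition with $z_1=\bar z$ and $z_2=u^\ast$ gives
\[
\lim_{\epsilon\to 0}\frac{\mu^y(B_\epsilon(u^\ast))}{\mu^y(B_\epsilon(\bar z))}=\exp\bigl(I(\bar z;y)-I(u^\ast;y)\bigr)=1.
\]

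Next I would use the strong MAP property of $\bar z$, namely $\mu^y(B_\epsilon(\bar z))/M^\epsilon\to 1$, where $M^\epsilon=\sup_{u\in X}\mu^y(B_\epsilon(u))$. Writing
\[
\frac{\mu^y(B_\epsilon(u^\ast))}{M^\epsilon}=\frac{\mu^y(B_\epsilon(u^\ast))}{\mu^y(B_\epsilon(\bar z))}\cdot\frac{\mu^y(B_\epsilon(\bar z))}{M^\epsilon},
\]
the two factors on the right tend to $1$, so $\mu^y(B_\epsilon(u^\ast))/M^\epsilon\to 1$, which is exactly the definition of a strong MAP estimator. Combining this with proposition \ref{t:sMAP}(iii) yields the equivalence claimed in theorem \ref{c:sMAP}.

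I do not anticipate a genuine obstacle beyond verifying that a closed ball in $B^t_1(\T^d)$ meets the hypotheses of proposition \ref{prop:limit_is_onsager} and that a minimizer of $I$ necessarily lies in $B^s_1(\T^d)$ (which is built into the definition of $I$, since $I=\infty$ off $B^s_1(\T^d)$ and $I$ attains a finite minimum by lemma \ref{l:OMmin}). The only subtle point worth pausing on is that proposition \ref{prop:limit_is_onsager}(ii) gives the ratio limit only for centers in the smaller space $B^s_1(\T^d)$; but that is precisely where both minimizers of $I$ automatically live, so this restriction is exactly compatible with what is needed.
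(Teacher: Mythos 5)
Your argument for the direction ``minimizer of $I$ $\Rightarrow$ strong MAP'' is essentially the paper's: take the strong MAP $\bar z$ produced by proposition \ref{t:sMAP}, use proposition \ref{prop:limit_is_onsager}(ii) to get $\mu^y(B_\epsilon(u^\ast))/\mu^y(B_\epsilon(\bar z))\to 1$, and multiply by $\mu^y(B_\epsilon(\bar z))/M^\epsilon\to 1$. That part is fine.

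The gap is in the other inclusion. The theorem asserts that the \emph{set} of strong MAP estimators coincides with the set of minimizers of $I$, so you must also show that \emph{every} strong MAP estimator is a minimizer. You claim proposition \ref{t:sMAP} ``already gives one direction,'' but part (iii) of that proposition only says that the particular point $\bar z$, obtained as a limit of the maximizers $z^\delta$ of $z\mapsto\mu^y(A^\delta+z)$, is simultaneously a strong MAP and a minimizer. It says nothing about an arbitrary $\tilde z$ satisfying $\lim_{\epsilon\to0}\mu^y(B_\epsilon(\tilde z))/M^\epsilon=1$: a priori such a $\tilde z$ need not arise as a limit of the $z^\delta$, and more importantly you do not know that $\tilde z\in B^s_1(\T^d)$, which you would need before you could even apply proposition \ref{prop:limit_is_onsager}(ii) to compare $I(\tilde z;y)$ with $I(\bar z;y)$. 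The paper closes this direction differently: any strong MAP estimator is a weak MAP estimator (by the trivial bound $\mu(B_\epsilon(\hat u-h))\le M^\epsilon$), and theorem \ref{t:OMwMAP} — whose proof contains the nontrivial step ruling out $\hat u\in B^t_1(\T^d)\setminus B^s_1(\T^d)$ — then identifies every weak MAP estimator with a minimizer of $I$. You need either this reduction to the weak MAP characterization or a direct argument (e.g.\ via lemma \ref{l:limB-nE} applied to the constant sequence $z^\delta=\tilde z$) showing that a strong MAP estimator must lie in $B^s_1(\T^d)$ before your ratio computation can finish the job.
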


The proof of theorem \ref{t:OMwMAP} concerning weak MAP estimates is relatively straightforward and relies on lemma \ref{l:crh}, i.e., the ability to consider the subspace $B^\gs_1(\T^d)$, where the Radon--Nikodym derivative $R_h^{\mu^y}$ has a continuous representative.
The proof of proposition \ref{t:sMAP} related to strong MAP estimates is more involved and requires a series of technical lemmas (lemma \ref{l:bdrym} to \ref{l:limB-wc}) which are stated in section \ref{sec:proof_for_sec4}. Our approach is based on developing asymptotic estimates for the small ball probability ratios by projecting the related measures to finite dimensions, where explicit calculations can be carried out.

The difference in difficulty of the proofs related to the two notions of MAP estimates highlights the flexibility of weak MAP estimators. It seems that explicit calculations are typically required for the proof in the case of strong MAP estimates. For practical purposes, it is a highly interesting future task to develop general conditions under which the two MAP estimate concepts coincide.

\begin{remark}
Proposition 4.3.8 in \cite{Boga10} shows that if $\Phi$ is convex in $u$, then since by lemma \ref{lem:logconc} the $B^s_1$-Besov prior $\lambda$ is logarithmically-concave, the posterior $\mu^y$ is also logarithmically-concave and hence quasi-concave. In that case proposition \ref{prop:qcunim}  shows that the set of modes is convex. The convexity of $\Phi$ depends on the forward operator $\G$: if for example $\G$ is linear then $\Phi$ is convex, however in the general nonlinear case $\Phi$ may be non-convex.
\end{remark}
%
%

\subsection{Weak consistency of the strong MAP}\label{sec:consistency}

We consider the frequentist setup, in which \[y_j\stackrel{i.i.d.}{\sim}N(\G(u^\dagger),\Sigma), \;\;j=1,\dots,n,\] for a  fixed underlying value of the unknown functional parameter $\utr\in X$. As before, we assume  that $\G:X\to\R^J$  is locally Lipschitz and $\Sigma\in \R^{J\times J}$ is a positive definite matrix.

For this set of data and with a $B^s_1$-Besov prior, $\gl$, the posterior measure satisfies
\begin{align}\label{e:muyn}
	\frac{d\mu^{y_1,y_2,\dots,y_n}}{d\lambda}(u) \propto \exp\Big(-\frac{1}{2}\sum_{j=1}^n \big|\Sigma^{-\frac{1}{2}}(y_j-\G(u))\big|^2\Big).	
\end{align}
Here also the Lipschitz continuity of $\G$ implies the well-definedness $\mu^{y_1,y_2,\dots,y_n}$ \cite{DS15}. Proposition \ref{t:sMAP} then implies that the strong MAP estimator of the above  posterior measure is a minimizer of 
\begin{align}\label{e:In}
I_n(u):=\|u\|_{B^s_1}+\frac{1}{2}\sum_{j=1}^n \big|\Sigma^{-\frac{1}{2}}(y_j-\G(u))\big|^2.
\end{align}
\begin{theorem}\label{t:consistency}
Suppose that $\G:B^t_1(\T^d)\to\R^J$ is locally Lipschitz and $\utr\in B^s_1(\T^d)$. Denote the minimizers of $I_n$ given in (\ref{e:In}) by $u_n$ for each $n\in N$. Then there exists $u^*\in B^s_1(\T^d)$ and a subsequence of $\{u_n\}$ such that $u_n\to u^*$ in $B^\ts_1(\T^d)$ almost surely for any $\ts<s$.
For any such $u^*$ we have $\G(u^*)=\G(\utr)$. 
\end{theorem}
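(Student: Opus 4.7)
\emph{Proof plan.} The strategy is to exploit the minimizer property $I_n(u_n)\le I_n(u^\dagger)$ to derive two a priori bounds, then combine a Fatou argument with the compact embedding of Besov spaces in order to extract a convergent subsequence.

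Write $y_j=\G(u^\dagger)+\xi_j$, set $v_n:=\G(u^\dagger)-\G(u_n)$ and $S_n:=\sum_{j=1}^n\xi_j$, and expand $|\Sigma^{-1/2}(y_j-\G(u_n))|^2=|\Sigma^{-1/2}(v_n+\xi_j)|^2$. The sums $\sum_j|\Sigma^{-1/2}\xi_j|^2$ cancel from both sides of $I_n(u_n)\le I_n(u^\dagger)$, producing the basic inequality
\[
\|u_n\|_{B^s_1}+\tfrac{n}{2}|\Sigma^{-1/2}v_n|^2+\langle\Sigma^{-1/2}v_n,\Sigma^{-1/2}S_n\rangle\le\|u^\dagger\|_{B^s_1}. \qquad(\star)
\]
Dropping $\|u_n\|_{B^s_1}\ge 0$, applying Cauchy--Schwarz, and solving the resulting quadratic inequality for $|\Sigma^{-1/2}v_n|$ yields
\[
|\Sigma^{-1/2}v_n|\le|\Sigma^{-1/2}\bar\xi_n|+\sqrt{|\Sigma^{-1/2}\bar\xi_n|^2+2\|u^\dagger\|_{B^s_1}/n},\qquad\bar\xi_n:=S_n/n.
\]
The strong law of large numbers gives $\bar\xi_n\to 0$ a.s., whence $v_n\to 0$ a.s., i.e.\ $\G(u_n)\to\G(u^\dagger)$ a.s.\ along the entire sequence. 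Separately, maximising the right-hand side of $(\star)$ with respect to $v_n$ (equivalently completing the square) yields
\[
\|u_n\|_{B^s_1}\le\|u^\dagger\|_{B^s_1}+\tfrac{1}{2n}|\Sigma^{-1/2}S_n|^2.
\]

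The main obstacle is that this last bound is only $O(\log\log n)$ a.s.\ by the law of the iterated logarithm, so it does not by itself furnish an a.s.\ bounded sequence. To circumvent this, note that $\Sigma^{-1/2}\xi_j\sim N(0,I_J)$ implies $\mathbb{E}[|\Sigma^{-1/2}S_n|^2/n]=J$ for every $n$, whence Fatou's lemma gives $\mathbb{E}[\liminf_n|\Sigma^{-1/2}S_n|^2/n]\le J<\infty$. Consequently, for almost every $\omega$ there is a random subsequence $n_k=n_k(\omega)$ along which $|\Sigma^{-1/2}S_{n_k}|^2/n_k$ stays bounded, and by the previous display $\|u_{n_k}\|_{B^s_1}$ is bounded as well.

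Since the embedding $B^s_1(\T^d)\hookrightarrow B^{\tilde s}_1(\T^d)$ is compact for every $\tilde s<s$, one may extract a further subsequence along which $u_{n_k}\to u^*$ strongly in $B^{\tilde s}_1(\T^d)$. Lower semicontinuity of the wavelet-defined norm $\|\cdot\|_{B^s_1}$, obtained by applying Fatou to its defining series after noting that $B^{\tilde s}_1$-convergence yields coefficient-wise convergence against the wavelets, forces $u^*\in B^s_1(\T^d)$. Finally, local Lipschitz continuity of $\G$ gives $\G(u_{n_k})\to\G(u^*)$; combining with the a.s.\ limit $\G(u_n)\to\G(u^\dagger)$ established above, we conclude $\G(u^*)=\G(u^\dagger)$, as required.
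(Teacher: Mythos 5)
Your proof is correct, but it follows a genuinely different route from the paper's. You work pathwise: the strong law of large numbers applied to $\bar\xi_n=S_n/n$ gives $\G(u_n)\to\G(\utr)$ almost surely along the \emph{entire} sequence, whereas the paper only obtains $\bbE\bigl|\Sigma^{-1/2}(\G(\utr)-\G(u_n))\bigr|^2\to0$ and must pass to a subsequence to upgrade convergence in probability to almost sure convergence. Likewise, your bound $\|u_n\|_{B^s_1}\le\|\utr\|_{B^s_1}+\tfrac1{2n}|\Sigma^{-1/2}S_n|^2$ combined with the Fatou/$\liminf$ device (correctly flagged: the law of the iterated logarithm rules out an a.s.\ uniform bound) produces a \emph{pathwise} bounded subsequence in $B^s_1(\T^d)$, after which the compact embedding $B^s_1(\T^d)\hookrightarrow B^{\ts}_1(\T^d)$ and weak*-lower semicontinuity of the norm finish the argument exactly as in the paper's lemma \ref{l:OMmin}. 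The paper instead keeps everything in expectation: it bounds $\bbE\|u_n\|_{B^s_1}$ uniformly, runs a diagonal extraction over the quantities $\bbE|\la u_n,\psi_\ell\ra|$ to build a candidate limit, proves $\bbE\|u_n-u^*\|_{B^{\ts}_1}\to0$, and then extracts an a.s.\ convergent subsequence. Your version is more elementary and avoids the somewhat delicate passage from convergence of $\bbE|\la u_n,\psi_\ell\ra|$ to convergence of $\bbE|\la u_n-u^*,\psi_\ell\ra|$.

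The one substantive caveat is that your subsequence $n_k=n_k(\omega)$ and hence your limit $u^*=u^*(\omega)$ are $\omega$-dependent, since they are chosen after the Fatou step on the event $\{\liminf_n|\Sigma^{-1/2}S_n|^2/n<\infty\}$. The paper's construction yields a subsequence chosen deterministically (via expectations and a diagonal argument), so under the reading of the theorem in which the subsequence is fixed before the ``almost surely'' quantifier, your argument proves a slightly weaker statement: for almost every $\omega$ there exist a subsequence and a limit $u^*(\omega)\in B^s_1(\T^d)$ with the stated properties. The final identity $\G(u^*)=\G(\utr)$ still holds almost surely for any such limit, so the substance of the consistency result is preserved; if a deterministic subsequence is required, you would need to replace the pathwise Fatou step by a moment bound on $\|u_n\|_{B^s_1}$ (as the paper does) before extracting.
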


If $\utr$ lives only in $X=B^t_1(\T^d)$, and not necessarily in $B^s_1(\T^d)$, we can only get the convergence of $\{\G(u_n)\}$:

\begin{corollary}\label{c:consistency}
Let $\G$ and $u_n$, $n\in\N$, satisfy the assumptions of theorem \ref{t:consistency} and suppose that $\utr\in B^t_1(\T^d)$.
Then there exists a subsequence of $\{\G(u_n)\}_{n\in\N}$ converging to $\G(\utr)$ almost surely.
\end{corollary}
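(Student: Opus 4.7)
The proof proceeds by the same comparison principle as for theorem \ref{t:consistency}, but with an additional approximation step to compensate for the fact that $\utr$ may no longer be a legal competitor in $I_n$ (it can have $\|\utr\|_{B^s_1}=\infty$). The plan is to exploit density of $B^s_1(\T^d)$ in $B^t_1(\T^d)$, which is clear from the wavelet characterisation, to pick a sequence $v_k\in B^s_1(\T^d)$ with $v_k\to\utr$ in $B^t_1(\T^d)$, apply the minimality inequality $I_n(u_n)\le I_n(v_k)$, and then pass $n\to\infty$ followed by $k\to\infty$. Existence of the minimisers $u_n$ is inherited from the hypotheses of theorem \ref{t:consistency}.

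Dividing the minimality inequality by $n$ gives
\[
\frac{I_n(u_n)}{n}\le \frac{\|v_k\|_{B^s_1}}{n}+\frac{1}{2n}\sum_{j=1}^n\bigl|\Sigma^{-\frac12}(y_j-\G(v_k))\bigr|^2.
\]
Since $\|v_k\|_{B^s_1}<\infty$ for each fixed $k$, the first term vanishes as $n\to\infty$. For the second, the summands are i.i.d.\ with finite mean, so the strong law of large numbers (applied on the intersection over $k\in\N$ of the corresponding countably many full-measure sets) yields almost surely
\[
\limsup_{n\to\infty}\frac{I_n(u_n)}{n}\le \frac{J}{2}+\frac{1}{2}\bigl|\Sigma^{-\frac12}(\G(\utr)-\G(v_k))\bigr|^2.
\]
Letting $k\to\infty$ and invoking local Lipschitz continuity of $\G:B^t_1(\T^d)\to\R^J$, the right-hand side converges to $J/2$, so $\limsup_{n\to\infty} I_n(u_n)/n\le J/2$ almost surely.

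The final step extracts convergence of $\G(u_n)$. Dropping the non-negative term $\|u_n\|_{B^s_1}/n$, expanding $y_j=\G(\utr)+\xi_j$ in the quadratic fit term, and using SLLN for $\frac1n\sum_{j=1}^n|\Sigma^{-\frac12}\xi_j|^2\to J$ and $\bar\xi_n:=\frac1n\sum_{j=1}^n\xi_j\to 0$ a.s., one obtains
\[
|w_n|^2-2\,|\Sigma^{-\frac12}\bar\xi_n|\,|w_n|\le o(1),\qquad w_n:=\Sigma^{-\frac12}(\G(\utr)-\G(u_n)),
\]
from which completing the square gives $|w_n|\to 0$ a.s., and hence $\G(u_n)\to\G(\utr)$ a.s.\ (in fact for the whole sequence, which is strictly stronger than the claimed subsequential convergence). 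The subtle point, and the only reason this is not entirely routine, is that in the present setting one has no a priori bound on $\|u_n\|_{B^s_1}$ and therefore no direct control on $|w_n|$ with which to treat the cross term $\langle\Sigma^{-\frac12}\bar\xi_n,w_n\rangle$; the quadratic $|w_n|^2$ produced by expanding the fit term is what allows one to absorb this cross term via a Young-type estimate and close the argument.
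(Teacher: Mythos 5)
Your proof is correct, and it rests on the same skeleton as the paper's: approximate $\utr$ by $v\in B^s_1(\T^d)$ (possible by density of $B^s_1$ in $B^t_1$), invoke the minimality inequality $I_n(u_n)\le I_n(v)$, expand $y_j=\G(\utr)+\xi_j$, and control the resulting noise terms before sending the approximation error to zero. Where you diverge is in the probabilistic handling of those noise terms. The paper stays at the level of second moments: after a Young-type rearrangement it bounds $\bbE\big|\Sigma^{-\frac12}(\G(\utr)-\G(u_n))\big|^2$ by $4C^2\gep^2+O(1/n)$, lets $\gep\to0$ to get convergence in probability, and then extracts an almost surely convergent subsequence --- which is exactly the (subsequential) statement of the corollary and requires nothing beyond $\bbE|\Sigma^{-\frac12}\xi_1|^2<\infty$ and independence. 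You instead work pathwise: the strong law of large numbers (on the countable intersection of full-measure sets indexed by $k$) gives $\limsup_n I_n(u_n)/n\le J/2$ almost surely, and completing the square in $|w_n|$ absorbs the cross term $\la w_n,\Sigma^{-\frac12}\bar\xi_n\ra$ without any a priori bound on $\|u_n\|_{B^s_1}$ --- precisely the point you flag, and the same role Young's inequality plays in the paper. The payoff of your route is a strictly stronger conclusion, namely $\G(u_n)\to\G(\utr)$ almost surely along the \emph{whole} sequence rather than a subsequence; the cost is an appeal to the SLLN and the slightly more delicate bookkeeping of the double limit in $n$ and $k$ (your ``$\le o(1)$'' should strictly be read as ``has nonpositive limsup,'' which is all the completing-the-square step needs since the left-hand side is bounded below by $-|\Sigma^{-\frac12}\bar\xi_n|^2\to0$). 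Both arguments are sound.
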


Theorem \ref{t:consistency} states that the true solution is identified in the range of $\G$, which is the natural objective also in regularization theory \cite{EHN96}. The full identification of $\utr$ is dependent on further properties of $\G$, e.g., injectivity of $\G$ would immediately yield $u^* = \utr$.

\section{Connections to logarithmic derivative}\label{sec:logder}

In this section we discuss the logarithmic derivative of the posterior measure. We mainly revisit known results (see e.g. \cite{Boga10}) and also derive the logarithmic derivative of the posterior $\mu^y$ given in \eqref{e:muy}. The intuition behind logarithmic derivative is that it roughly corresponds to the G\^{a}teaux derivative of the posterior potential $I$. 
If the logarithmic derivative is smooth, then its zero points can determine the weak MAP estimates as shown in \cite{HB15}. The Besov $B^s_1$-prior does not meet this criteria due to the discontinuity of its logarithmic derivative at the origin as we show in 
theorem \ref{t:Besov-logder}. The logarithmic derivative also determines the Radon--Nikodym derivative as recorded in proposition \ref{prop:onsager} below. Moreover, it can be used as a basis of Newton-type algorithms to estimate the weak MAP in case an explicit form of the potential $I$ is not easily accessible, see e.g. Cauchy priors in \cite{sullivan2016well}.

\begin{definition}
A measure $\mu$ on $X$ is called Fomin differentiable along the vector $h$ if, for every set ${\mathcal A}\in {\mathcal B}(X)$, there exists a finite limit
\begin{equation}
	\label{eq:dhmu_def}
	d_h\mu({\mathcal A}) = \lim_{t\to 0} \frac{\mu({\mathcal A}+th)-\mu({\mathcal A})}{t}
\end{equation}
\end{definition}
It is well-known that if $\mu$ is Fomin differentiable along $h$ then the limit $d_h\mu$ is a countably additive signed measure on ${\mathcal B}(X)$ and has bounded variation \cite{Boga10}. Moreover, $d_h \mu$ is absolutely continuous with respect to $\mu$.

We denote the domain of differentiability by
\begin{equation}
	D(\mu) = \{h \in X \; | \; \mu \textrm{ is Fomin differentiable along } h\}
\end{equation}

\begin{definition}
The Radon--Nikodym density of the measure $d_h\mu$ with respect to $\mu$ is denoted
by $\beta^\mu_h$ and is called the logarithmic derivative of $\mu$ along $h$.
\end{definition}

\begin{proposition}\cite[prop. 6.4.1]{Boga10}
\label{prop:onsager}
Suppose $\mu$ is a Radon measure on a locally convex space $X$ and is Fomin differentiable along a vector $h\in X$.
If it holds that $\exp(\epsilon |\beta^\mu_h(\cdot)|) \in L^1(\mu)$ for some $\epsilon>0$, then 
$\mu$ is quasi-invariant along $h$ and the Radon--Nikodym density $R_h^\mu$ of $\mu_h$ with respect to $\mu$
satisfies the equality
\begin{equation}
	\label{eq:general_onsager}
	R_h^\mu(u) = \exp\left(\int_0^1 \beta^\mu_h(u-sh)ds\right) \quad {\rm in }\; L^1(\mu).
\end{equation}
\end{proposition}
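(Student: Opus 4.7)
The plan is to verify the formula by reading both sides as the $t=1$ evaluations of measure-valued curves satisfying the same first-order evolution equation in $t$. For $t\in[0,1]$, I would define
$$\rho_t(u) := \exp\left(\int_0^t \beta^\mu_h(u-sh)\,ds\right), \qquad \nu_t := \rho_t\cdot\mu,$$
and aim to show $\nu_t=\mu_{th}$ for every $t\in[0,1]$. Evaluating at $t=1$ will then deliver simultaneously the quasi-invariance of $\mu$ along $h$ and the claimed identity for $R^\mu_h$.

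The first step is to establish $\rho_t\in L^1(\mu)$ for every $t\in[0,1]$, which is where the exponential integrability hypothesis $\exp(\epsilon|\beta^\mu_h|)\in L^1(\mu)$ enters. Jensen's inequality applied to $\exp$ with the probability measure $ds/t$ on $[0,t]$ yields
$$\rho_t(u)\le \frac{1}{t}\int_0^t \exp\bigl(t\,|\beta^\mu_h(u-sh)|\bigr)\,ds,$$
and I would propagate integrability over subintervals of length less than $\epsilon$ using a H\"older/Fubini-type argument, with a finite iteration covering all of $[0,1]$.

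The heart of the argument is to fix a sufficiently rich class of test functions $\mathcal{F}$---for instance the bounded cylindrical functions smooth along $h$, which separate Radon measures on $X$---and show that for every $\varphi\in\mathcal{F}$ both $t\mapsto\int\varphi\,d\nu_t$ and $t\mapsto\int\varphi\,d\mu_{th}$ are absolutely continuous on $[0,1]$, coincide at $t=0$, and satisfy the same linear scalar ODE. For $\nu_t$, differentiation under the integral delivers
$$\tfrac{d}{dt}\int\varphi\,d\nu_t=\int\varphi(u)\,\beta^\mu_h(u-th)\,\rho_t(u)\,\mu(du),$$
with the exchange justified by the exponential integrability and the bound from Step~1. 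For $\mu_{th}$, I would compute $\tfrac{d}{dt}\int\varphi\,d\mu_{th}=\int(\partial_h\varphi)(u+th)\,\mu(du)$ and then apply the integration-by-parts identity $\int\partial_h\psi\,d\mu=-\int\psi\,\beta^\mu_h\,d\mu$---the defining property of $\beta^\mu_h$ for Fomin-differentiable $\mu$---to the shifted test function $\psi(u):=\varphi(u+th)$, so as to recast the derivative as an integral against $\mu_{th}$ weighted by $\beta^\mu_h(\,\cdot-th)$. Uniqueness of solutions to the resulting scalar linear ODE (Gronwall) then forces equality of the two curves, and richness of $\mathcal{F}$ upgrades this to equality of the Radon measures $\nu_t$ and $\mu_{th}$.

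The hardest part I expect is two-fold. First, bootstrapping $L^1$-integrability of $\rho_t$ uniformly over $t\in[0,1]$ from a single exponent of integrability requires a careful iterative argument involving shifted measures that must all be shown a posteriori to agree with $\mu_{th}$, so the induction has to be set up so as not to presuppose what is being proved. Second, choosing $\mathcal{F}$ to be simultaneously dense enough to separate Radon measures on the locally convex $X$, regular enough along $h$ for the integration-by-parts step, and closed under the shift $u\mapsto u+th$ takes some care; the Radon property of $\mu$ together with the availability of smooth cylindrical functions on $X$ supplies a suitable class, which closes the argument.
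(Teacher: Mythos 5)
The paper does not prove this proposition at all: it is quoted verbatim from Bogachev \cite[prop.~6.4.1]{Boga10}, where the argument runs by disintegrating $\mu$ into conditional measures on the lines $u+\R h$, showing that the conditional densities are absolutely continuous with logarithmic derivative $\beta^\mu_h$ restricted to the line, and then obtaining the exponential formula from the one-dimensional fundamental theorem of calculus (the exponential integrability guaranteeing that the conditional densities are a.e.\ positive, hence quasi-invariance rather than mere absolute continuity). Your flow strategy is a genuinely different route, but as written it has two concrete problems. First, the uniqueness step does not close: the relation $\tfrac{d}{dt}\int\varphi\,d\nu_t=\int\varphi\,\beta^\mu_h(\cdot-th)\,d\nu_t$ is \emph{not} a scalar ODE for the quantity $F_\varphi(t)=\int\varphi\,d\nu_t$, because the right-hand side is a different linear functional of $\nu_t$ (it involves the unbounded weight $\beta^\mu_h(\cdot-th)$), so Gronwall is not applicable. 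What you actually need is uniqueness for the measure-valued linear Cauchy problem $\dot\nu_t=\beta^\mu_h(\cdot-th)\,\nu_t$, $\nu_0=\mu$, with a merely exponentially integrable coefficient; that uniqueness is essentially the whole content of the theorem and is precisely what the reduction to one-dimensional conditional measures supplies. (The alternative of showing that $t\mapsto\int\varphi(u-th)\rho_t(u)^{-1}\,\mu_{th}(du)$ is constant requires differentiating $\rho_t$ along $h$, i.e.\ second-order information on $\mu$, which you do not have.)

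Second, there is a sign obstruction you would hit if you carried out your own computation: with the paper's conventions $d_h\mu(\mathcal A)=\lim_{t\to0}t^{-1}(\mu(\mathcal A+th)-\mu(\mathcal A))$ and $\mu_h(\cdot)=\mu(\cdot-h)$, one finds in one dimension (density $p$) that $\beta^\mu_1=p'/p$ while $R_1^\mu(x)=p(x-1)/p(x)=\exp\bigl(-\int_0^1\beta^\mu_1(x-s)\,ds\bigr)$, so your two curves satisfy evolution equations whose right-hand sides differ by a sign and cannot coincide as set up; indeed $\tfrac{d}{dt}\int\varphi\,d\mu_{th}=-\int\varphi\,\beta^\mu_h(\cdot-th)\,d\mu_{th}$ after the integration by parts, not $+$. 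This traces back to a convention mismatch in the quoted formula \eqref{eq:general_onsager} itself: checking it against lemma~\ref{l:BesovRN} and theorem~\ref{t:Besov-logder} (take $p\propto e^{-\alpha|x|}$, so $\beta_h(x)=-\alpha h\,\sign(x)$ and $\int_0^1\beta_h(x-sh)\,ds=\alpha|x-h|-\alpha|x|$) shows that \eqref{eq:general_onsager} as printed yields the reciprocal of the Radon--Nikodym derivative computed in lemma~\ref{l:BesovRN}. So either align the shift convention with Bogachev's or insert the minus sign; your bootstrap of $\rho_t\in L^1(\mu)$ over subintervals of length less than $\epsilon$ is the right idea but only becomes non-circular once the $t$-small case has been fully established by one of the above uniqueness mechanisms.
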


\begin{remark}
Recall that as discussed in Remark \ref{rem:cont_repres}, it is desirable to choose $E$ in the definition of wMAP to be a subspace $E\subset X$ such that $R_h^\mu$, $h\in E$, has a continuous representative $\tilde R_h^\mu$. Therefore, the integral $\int_0^1\beta^\mu_h(u-sh)ds$ in \eqref{eq:general_onsager} has a measurable representative, which is continuous outside the set $\{u \in X \; | \; \tilde R_h^\mu(u)=0\}$. Moreover, a weak mode $\hat u$ of $\mu$ can be equivalently defined 
by condition
\begin{equation*}
	\int_0^1\beta^\mu_h(u-sh)ds \leq 0
\end{equation*}
for all $h\in E$.
\end{remark}

The construction of the Besov prior in definition \ref{def:besov_prior} is a prototypical example of a product measure.
By setting $\lambda_\ell = \frac{1}{a_\ell} \pi_X\left(\frac x{a_\ell}\right) dx$ for $a_n = \ell^{-\left(\frac sd - \frac 12\right)}$ we
can define the probability law of the Besov prior on $(\R^\infty,{\mathcal B}(\R^\infty))$ by
$\lambda = \otimes_{\ell=1}^\infty \lambda_\ell$.
For product measures, the Fomin differentiability calculus reduces to finite dimensional projections in a straightforward manner.
\begin{lemma}\cite[prop. 3.4.1 (iii)]{Boga10}
\label{lem:onedim_diff}
Let $\mu$ be a probability measure on $(\R, {\mathcal B}(\R))$. Then $\mu$ is Fomin
differentiable along $h\neq 0$ if and only if has an absolutely continuous density $\pi_\mu$ whose derivative satisfies $\pi_\mu' \in L^1(\R)$. In this case, $d_1 \mu = \pi_\mu' dx$.
\end{lemma}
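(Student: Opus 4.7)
I would prove the equivalence by handling the two implications separately; by linearity one may assume $h=1$ without loss of generality, since $d_h\mu = h\,d_1\mu$ once either side exists. For the sufficient direction, assume $\mu$ has an absolutely continuous density $\pi_\mu$ with $\pi_\mu'\in L^1(\R)$. I would write
\[
\frac{\mu(A+t)-\mu(A)}{t} \;=\; \int_A\frac{\pi_\mu(x+t)-\pi_\mu(x)}{t}\,dx \;=\; \int_A\frac{1}{t}\int_0^t \pi_\mu'(x+s)\,ds\,dx
\]
by the fundamental theorem of calculus. Applying Fubini and using continuity of translation in $L^1(\R)$, the inner average converges to $\pi_\mu'(x)$ in $L^1$-sense, so the double integral converges to $\int_A \pi_\mu'\,dx$, uniformly in $A\in\mathcal{B}(\R)$. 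This establishes Fomin differentiability and identifies $d_1\mu = \pi_\mu'\,dx$.

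For necessity, I would first recall from the Fomin calculus that whenever the pointwise limit in (\ref{eq:dhmu_def}) exists for every Borel $A$, the quantity $d_1\mu$ is automatically a bounded signed Borel measure on $\R$. The crux is first showing that $\mu\ll\mathrm{Lebesgue}$, which I would argue by contradiction. Assume the singular part $\mu_s$ of the Lebesgue decomposition $\mu=\mu_{ac}+\mu_s$ is nonzero and concentrated on a Lebesgue-null Borel set $N$. Fubini then gives
\[
\int_\R \mu_s(N+t)\,dt \;=\; \int_\R |N|\,d\mu_s(x) \;=\; 0,
\]
so along some sequence $t_n\to 0$ one has $\mu_s(N+t_n)=0$. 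Since $|N+t_n|=0$ also forces $\mu_{ac}(N+t_n)=0$, the difference quotient $(\mu(N+t_n)-\mu(N))/t_n$ reduces to $-\mu_s(\R)/t_n$, which diverges as $t_n\to 0$, contradicting existence of the Fomin limit. Hence $\mu = \pi_\mu\,dx$ with $\pi_\mu\in L^1(\R)$.

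With $\mu$ absolutely continuous in hand, any Lebesgue-null Borel set $N$ satisfies $\mu(N+t)=\mu(N)=0$, so $d_1\mu(N)=0$, whence $d_1\mu\ll\mathrm{Lebesgue}$, say $d_1\mu=g\,dx$ with $g\in L^1(\R)$. Passing to the limit in $\frac{1}{t}\bigl[\int\phi(x-t)\,d\mu(x)-\int\phi(x)\,d\mu(x)\bigr]$ via dominated convergence for $\phi\in C_c^1(\R)$ yields the integration-by-parts identity $\int\phi'\,d\mu = -\int\phi\,d(d_1\mu)$, which identifies $g$ with the distributional derivative of $\pi_\mu$. Since $\pi_\mu\in L^1(\R)$ has distributional derivative $g\in L^1(\R)$, it admits an absolutely continuous representative with $\pi_\mu'=g\in L^1(\R)$, yielding $d_1\mu=\pi_\mu'\,dx$. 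The main obstacle is the singular-part elimination in the necessity direction: one must cleverly select a test set whose translates avoid the singular mass along a subsequence, and the Fubini identity above provides the essential tool; the remaining steps are routine distribution theory plus the standard $W^{1,1}$-characterization of absolutely continuous functions.
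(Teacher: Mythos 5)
The paper does not prove this lemma at all --- it is quoted directly from Bogachev \cite[prop.~3.4.1~(iii)]{Boga10} --- so there is no internal proof to compare against; your argument has to be judged on its own. It is correct and is essentially the standard argument. The reduction to $h=1$, the sufficiency direction via the fundamental theorem of calculus, Fubini, and $L^1$-continuity of translation (which gives convergence uniformly over Borel sets $A$), and the Fubini identity $\int_\R\mu_s(N+t)\,dt=0$ used to kill the singular part along a sequence $t_n\to0$ are all sound; the latter is exactly the classical device for showing that Fomin-differentiable measures on $\R$ are absolutely continuous. The one step you gloss over is the identification $\lim_{t\to0}t^{-1}\bigl[\int\phi\,d\mu(\cdot+t)-\int\phi\,d\mu\bigr]=\int\phi\,d(d_1\mu)$ for $\phi\in C_c^1(\R)$: dominated convergence only gives that this limit equals $-\int\phi'\,d\mu$; equating it with $\int\phi\,d(d_1\mu)$ requires upgrading the setwise convergence of the signed measures $\nu_t=t^{-1}\bigl(\mu(\cdot+t)-\mu\bigr)$ to convergence of integrals of bounded functions, which needs $\sup_t\|\nu_t\|_{TV}<\infty$ and uniform countable additivity (Nikodym / Vitali--Hahn--Saks). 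This is true and is the same nontrivial input you already invoke when asserting that $d_1\mu$ is a bounded signed measure, but it should be credited to that theorem rather than to dominated convergence. With that caveat, the proof is complete, and the final identification of $g$ with the distributional derivative of $\pi_\mu\in L^1$, hence with the a.e.\ derivative of an absolutely continuous representative, is the standard $W^{1,1}$ characterization.
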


The Fomin differentiability of a product measure $\mu = \otimes_{n=1}^\infty \mu_n$ on the space
$X = \prod_{n=1}^\infty X_n$ assigned with the product topology is characterized by the following theorem:
\begin{proposition}\cite[prop. 4.1.1.]{Boga10}
\label{thm:product_measures_conv}
Suppose that $\beta^{\mu_n}_{h_n}$ is the logarithmic derivative of $\mu_n$ in the direction $h_n\in X_n$. The following claims are equivalent:
\begin{itemize}
	\item[(i)] $\mu$ is differentiable along $h = (h_j)_{j=1}^\infty \in X$,
	\item[(ii)] the series $\sum_{n=1}^\infty \beta_{h_n}^{\mu_n}$ converges in the norm of $L^1(\mu)$ and
	$$\beta^\mu_h=\lim_{N\to\infty}\sum_{n=1}^N \beta_{h_n}^{\mu_n}.$$
\end{itemize}
\end{proposition}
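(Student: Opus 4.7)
The plan is to reduce to finite marginals via the martingale structure of the partial sums $S_N := \sum_{n=1}^N \beta^{\mu_n}_{h_n}$ under the natural filtration. Let $\mathcal{F}_N$ denote the $\sigma$-algebra on $X = \prod_n X_n$ generated by the first $N$ coordinate projections; note that $\bigcup_N \mathcal{F}_N$ generates $\mathcal{B}(X)$. As a preliminary step I would verify the finite-product case: for $\nu = \mu_1\otimes\cdots\otimes\mu_N$ and $k = (h_1,\dots,h_N)$, the measure $\nu$ is Fomin differentiable along $k$ with $\beta^\nu_k(x) = \sum_{n=1}^N \beta^{\mu_n}_{h_n}(x_n)$. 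This follows by applying lemma \ref{lem:onedim_diff} coordinatewise and invoking Fubini's theorem on the integration-by-parts identity
\[\int D_k\phi\, d\nu = -\int \phi\, \beta^\nu_k\, d\nu\]
for smooth cylinder $\phi$. Viewing each $\beta^{\mu_n}_{h_n}$ as a function on $X$ that depends only on the $n$-th coordinate and integrates to zero against $\mu_n$ (take $\phi\equiv 1$ in the one-dimensional formula), the sequence $(S_N)_N$ is a martingale under $\mu$ with respect to $(\mathcal{F}_N)_N$.

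For the direction (i)$\Rightarrow$(ii), assume $\mu$ is Fomin differentiable along $h$, so that $\beta^\mu_h\in L^1(\mu)$. I would identify $S_N$ with the conditional expectation $\bbE[\beta^\mu_h\mid \mathcal{F}_N]$: for $\mathcal{F}_N$-measurable bounded $\phi$, the integration-by-parts identity against $\mu$ reduces to the one against $\mu_1\otimes\cdots\otimes\mu_N$, since the tail marginals $\bigotimes_{n>N}\mu_n$ are invariant under $h^{(N)} := (h_1,\dots,h_N,0,\dots)$ and $D_{h-h^{(N)}}\phi = 0$. Matching this with the finite-dimensional formula derived above yields $\bbE[\beta^\mu_h\mid \mathcal{F}_N] = S_N$. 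L\'evy's upward convergence theorem then gives $S_N \to \beta^\mu_h$ both in $L^1(\mu)$ and $\mu$-a.s.

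For the converse (ii)$\Rightarrow$(i), suppose $S_N\to S$ in $L^1(\mu)$. I would first use the finite-product case together with the invariance of tail marginals under $h^{(N)}$ to conclude that $\mu$ is Fomin differentiable along each $h^{(N)}$ with logarithmic derivative $S_N$. Then I would verify the integration-by-parts identity
\[\int D_h\phi\, d\mu = -\int \phi\, S\, d\mu\]
for smooth cylinder test functions $\phi$: choosing $N$ large enough that $\phi$ depends only on the first $N$ coordinates gives $D_h\phi = D_{h^{(N)}}\phi$, and the finite-step identity combined with $L^1$-convergence of $S_N$ to $S$ delivers the result. Standard characterizations of Fomin differentiability by integration-by-parts against a dense class of smooth test functions then recover \eqref{eq:dhmu_def}.

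I expect the converse direction to be the main technical obstacle: passing from the integration-by-parts identity against a dense class of cylinder functions to the pointwise-in-sets condition \eqref{eq:dhmu_def} requires controlling the translated measures $\mu(\cdot + th)$ uniformly for small $t$. This control is not immediate from the $L^1$ convergence of $S_N$ alone and is handled in \cite{Boga10} by exploiting the quasi-invariance implication of proposition \ref{prop:onsager} applied to the truncated shifts $h^{(N)}$, together with a tightness/diagonal argument to extend Fomin differentiability from each $h^{(N)}$ to the full shift $h$.
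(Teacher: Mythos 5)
The paper does not prove this proposition: it is quoted verbatim from \cite[prop.\ 4.1.1]{Boga10} and used as a black box, so there is no in-paper argument to compare against. Your martingale sketch is, in outline, the standard proof of that result: the finite-product integration-by-parts identity, the observation that the summands are independent, mean-zero, coordinate-wise functions so that $S_N$ is an $(\mathcal{F}_N)$-martingale, the identification $S_N=\bbE[\beta^\mu_h\mid\mathcal{F}_N]$ for the direction (i)$\Rightarrow$(ii) followed by L\'evy's upward theorem, and the verification of the integration-by-parts identity on cylinder functions for the converse are all correct and are essentially how Bogachev argues.

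Two remarks on the converse, which you rightly flag as the crux. First, the passage from ``integration by parts holds against smooth cylindrical $\phi$ with an $L^1(\mu)$ density $S$'' to Fomin differentiability in the set-wise sense \eqref{eq:dhmu_def} is not handled in \cite{Boga10} via quasi-invariance or proposition \ref{prop:onsager} (which requires exponential integrability of the logarithmic derivative and is not available under the hypotheses of (ii)); rather, one first deduces Skorohod differentiability (the distributional derivative $D_h\mu$ is the finite signed measure $S\cdot\mu$, hence the difference quotients of translates converge weakly), and then invokes the theorem that Skorohod differentiability together with $d_h\mu\ll\mu$ upgrades to Fomin differentiability. Citing that equivalence is the clean way to close your gap. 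Second, in identifying $S_N$ with the conditional expectation you should note explicitly that smooth cylindrical functions of the first $N$ coordinates are measure-determining for $\mu\circ P_N^{-1}$, so the two integration-by-parts identities do pin down $\bbE[\beta^\mu_h\mid\mathcal{F}_N]$ uniquely. With those adjustments your proposal is a faithful reconstruction of the cited proof.
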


Let us introduce the following subspace
\begin{equation*}
	H(\mu) = \{h \in D(\mu) \; | \; \beta^\mu_h \in L^2(\mu)\} \subset D(\mu),
\end{equation*}
which has a natural Hilbert space structure \cite[section 5]{Boga10}.
Surprisingly, for a large class of product measures $H(\mu)$ coincides with $D(\mu)$.
The following proposition follows from \cite[cor. 2]{BS90} and \cite[ex. 5.2.3]{Boga10}.
\begin{proposition}
\label{cor:prod_measures}
Suppose $\mu = \pi(x) dx$ is a Borel probability measure on the real line such that 
\begin{equation*}
	\int_\R \frac{\pi'(t)^2}{\pi(t)} dt < \infty.
\end{equation*}
If we set
$\mu_n(A) = \mu(A/a_n)$, where $a_n>0$, and $\mu = \otimes_{n=1}^\infty \mu_n$, then it follows that
\begin{equation*}
	D(\mu) = H(\mu) = \left\{h \in \R^\infty \; \left| \; \sum_{n=1}^\infty a_n^{-2} h_n^2 < \infty \right\}.\right.
\end{equation*}
\end{proposition}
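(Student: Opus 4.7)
The plan is to reduce the claim to a classical statement about almost-sure convergence of sums of independent scaled copies of a single mean-zero $L^2$ random variable.

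First, the Fisher information hypothesis $I(\mu):=\int_\R \pi'(t)^2/\pi(t)\,dt<\infty$ implies via Lemma~\ref{lem:onedim_diff} that $\mu$ is Fomin differentiable along $1$ with $\beta^\mu_1=g$, where $g(x):=\pi'(x)/\pi(x)$. One checks $\int g\,d\mu=\int \pi'\,dx=0$ and $\int g^2\,d\mu=I(\mu)$. A change of variables in \eqref{eq:dhmu_def} applied to $\mu_n(A)=\mu(A/a_n)$ gives
\[
\beta^{\mu_n}_{h_n}(x) = \frac{h_n}{a_n}\,g\!\left(\frac{x}{a_n}\right),
\]
so on the product space $(\R^\infty,\mu)$ the coordinate functions $X_n:=\beta^{\mu_n}_{h_n}$ are independent and mean-zero, with $\|X_n\|_{L^2(\mu)}^2=(h_n^2/a_n^2)\,I(\mu)$, and the normalised $X_n/\|X_n\|_{L^2(\mu)}$ are all identically distributed.

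By Proposition~\ref{thm:product_measures_conv}, $h\in D(\mu)$ is equivalent to $L^1(\mu)$-convergence of $S_N:=\sum_{n=1}^N X_n$. For the \emph{sufficient} direction, if $\sum h_n^2/a_n^2<\infty$, orthogonality of the $X_n$ in $L^2(\mu)$ shows that $\{S_N\}$ is Cauchy in $L^2(\mu)$, hence also in $L^1(\mu)$ since $\mu$ is a probability measure. Thus $h\in D(\mu)$, and the limit $\beta^\mu_h$ already lies in $L^2(\mu)$, so $h\in H(\mu)$ too.

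For the \emph{converse}, $L^1$-convergence of $S_N$ implies convergence in probability, and by L\'evy's equivalence theorem for sums of independent random variables this upgrades to almost-sure convergence of $S_N$. A classical consequence of Kolmogorov's three-series theorem (or a direct characteristic-function argument exploiting $\phi_g(t)=1-\tfrac{1}{2}I(\mu)t^2+o(t^2)$ near $0$) states that for an i.i.d.\ sequence of mean-zero, $L^2$ random variables $Y_n$ and scalars $c_n$, almost-sure convergence of $\sum c_n Y_n$ forces $\sum c_n^2<\infty$. Applied with $c_n=h_n/a_n$, this yields $\sum h_n^2/a_n^2<\infty$.

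Together the two directions identify $D(\mu)$ with the stated weighted $\ell^2$ space, and the sufficient direction simultaneously showed that $H(\mu)$ coincides with it. The main obstacle is the converse: one must pass from $L^1$-convergence to almost-sure convergence via L\'evy's theorem and then apply the sharp $\ell^2$ characterization for i.i.d.\ scaled sums. A minor check is that $I(\mu)>0$ so that the $L^2$ isometry is nondegenerate, which is automatic since no probability density on $\R$ can be constant.
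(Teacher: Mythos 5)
Your argument is correct, and it is genuinely more self-contained than what the paper does: the paper offers no proof of this proposition at all, simply citing \cite[cor. 2]{BS90} and \cite[ex. 5.2.3]{Boga10}. Your reduction is the natural one and almost certainly mirrors what sits behind those references: finite Fisher information gives, via Lemma \ref{lem:onedim_diff} (note $\pi'\in L^1(\R)$ by Cauchy--Schwarz, so the lemma applies), the one-dimensional logarithmic derivative $g=\pi'/\pi$ with $\int g\,d\mu=0$ and $\int g^2\,d\mu=I(\mu)$; the scaling identity $\beta^{\mu_n}_{h_n}(x)=(h_n/a_n)g(x/a_n)$ turns Proposition \ref{thm:product_measures_conv} into a question about $L^1$-convergence of $\sum c_nY_n$ with $c_n=h_n/a_n$ and $Y_n$ i.i.d.\ copies of $g$. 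The sufficiency direction via $L^2$-orthogonality is clean and simultaneously yields $H(\mu)=D(\mu)$. The necessity direction is the only place requiring care, and your chain ($L^1$ $\Rightarrow$ in probability $\Rightarrow$ a.s.\ by L\'evy's equivalence theorem $\Rightarrow$ $\sum c_n^2<\infty$ by the three-series theorem) is sound; to make the last step airtight one should record that (a) $c_n\to 0$ is forced, since otherwise a.s.\ convergence plus the second Borel--Cantelli lemma would make $g$ degenerate, contradicting $I(\mu)>0$, and (b) the truncated variances $\mathrm{Var}\bigl(Y\mathbf{1}_{|Y|\le 1/|c_n|}\bigr)$ then converge to $I(\mu)>0$, so the third series dominates a constant times $\sum c_n^2$. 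What your route buys is a transparent, elementary proof readable without Bogachev's differentiable-measures machinery; what the citation buys the authors is brevity and the general framework in which $H(\mu)$ carries its Hilbert structure.
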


Let us record the following direct consequence of proposition \ref{prop:onsager}: if $t>0$ then
\begin{equation}
	\label{eq:onsager_reparam}
	R_{th}(u) = \exp\left(\int_0^1 \beta^\mu_{th}(u-s\cdot th)ds\right) = \exp\left(\int_0^t \beta^\mu_{h}(u-s'\cdot h)ds'\right)
\end{equation}
in $L^1(\mu)$.
It is rather easy to see that $\lambda_1$ (and consequently $\lambda_\ell$ for any $\ell\in\N$) is Fomin differentiable since
\begin{equation*}
	\left|\lim_{t\to 0} \frac{\lambda_1({\mathcal A}+t) -\lambda_1({\mathcal A})}t\right| =
	\lim_{t\to 0} \frac 1{2t}\left| \int_{{\mathcal A}\cap \R_-} (\e^{x+t} - \e^x)dx + \int_{{\mathcal A} \cap \R_+} (\e^{-x-t} - \e^{-x})dx \right|
	< 1
\end{equation*}
for any ${\mathcal A}\in {\mathcal B}(\R)$. In more generality, this follows from lemma 
\ref{lem:onedim_diff} since the density function is absolutely continuous.

\begin{theorem}\label{t:Besov-logder}
Let $\lambda$ be the $B^s_1$-Besov measure given in definition \ref{def:besov_prior}.
The set of differentiability is given by
$D(\lambda) = B^{s-\frac d2}_{2}(\T^d)$ and for any $h = \sum_{\ell=1}^\infty h_\ell \phi_\ell \in D(\lambda)$ we have
\begin{equation*}
	\beta_h^\lambda(u) = \lim_{N\to\infty} \sum_{\ell=1}^N \beta_{h_\ell}^{\lambda_\ell}(u_\ell)
	\quad {\rm in } \; L^1(\lambda),
\end{equation*}
where
\begin{equation*}
\beta_{h_\ell}^{\lambda_\ell}(u_\ell) =  - \ell^{\frac sd-\frac 12} \sign (u_\ell) h_\ell 
\end{equation*}
is the logarithmic derivative of $\rho_\ell$ (see notation \ref{not:densities}).
\end{theorem}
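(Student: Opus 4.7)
The plan is to exploit the product structure of $\gl$, reduce the question to one-dimensional Fomin differentiability for each coordinate, and then assemble the pieces via proposition \ref{thm:product_measures_conv}.

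First, by definition \ref{def:besov_prior} together with notation \ref{not:densities}, I would write $\gl=\bigotimes_{\ell=1}^\infty \rho_\ell$, where $\rho_\ell$ is the pushforward of the Laplace law $\pi_X(x)\,dx$ under multiplication by $a_\ell=\ell^{-(s/d-1/2)}$. This places us directly in the hypothesis of proposition \ref{cor:prod_measures} with base density $\pi=\pi_X$, so the only thing to check is the finiteness of $\int_\R \pi_X'(t)^2/\pi_X(t)\,dt$, which is an immediate one-variable computation for the Laplace density. Proposition \ref{cor:prod_measures} then yields
\begin{equation*}
D(\gl)=H(\gl)=\Big\{h\in\R^\infty \,:\, \sum_{\ell=1}^\infty a_\ell^{-2} h_\ell^2<\infty\Big\}.
\end{equation*}

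Next I would match this weighted-$\ell^2$ space with a Besov space by reading off exponents. Since $a_\ell^{-2}=\ell^{\,2s/d-1}$ and the Besov weight in section \ref{sec:besov-def} is $\ell^{\,2(t/d+1/2)-1}$, the two coincide exactly when $t=s-\tfrac{d}{2}$, giving $D(\gl)=B^{s-d/2}_{2}(\T^d)$. This agrees with the identification of $Q(\gl)$ in lemma \ref{l:BesovRN}, a reassuring consistency check in view of the general inclusion $D(\gl)\subseteq Q(\gl)$.

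The remaining task is to compute the one-dimensional logarithmic derivative of $\rho_\ell$ and assemble the series. The density $\pi_\ell(u_\ell)=(2a_\ell)^{-1}\e^{-|u_\ell|/a_\ell}$ is absolutely continuous with integrable weak derivative, so lemma \ref{lem:onedim_diff} gives $\beta^{\rho_\ell}_1=\pi_\ell'/\pi_\ell=-\sign(\cdot)/a_\ell$; by linearity in the direction this yields $\beta^{\rho_\ell}_{h_\ell}(u_\ell)=-\ell^{s/d-1/2}\sign(u_\ell)\,h_\ell$. For $h\in D(\gl)$, proposition \ref{thm:product_measures_conv} then produces $\beta^\gl_h$ as the $L^1(\gl)$-limit of the partial sums $\sum_{\ell=1}^N \beta^{\rho_\ell}_{h_\ell}$, which is the stated formula. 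The genuine content sits inside proposition \ref{cor:prod_measures}; the rest is routine, and the only step where I would expect to slow down is matching the Besov-norm exponent convention to the weighted-$\ell^2$ condition coming out of that general result.
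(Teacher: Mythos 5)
Your proposal is correct and follows essentially the same route as the paper: verify finite Fisher information for the Laplace density, invoke proposition \ref{cor:prod_measures} to identify $D(\lambda)$ as the weighted $\ell^2$ space, match exponents to recognize $B^{s-d/2}_2(\T^d)$, and conclude via proposition \ref{thm:product_measures_conv}. The only difference is that you spell out the one-dimensional logarithmic derivative computation via lemma \ref{lem:onedim_diff}, which the paper leaves implicit.
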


Notice that the previous theorem directly states that for any $h\in B_{1}^{s}(\T^d)$ the logarithmic derivative is bounded
$|\beta^\lambda_h(u)| \leq C \norm{h}_{B_{1}^{s}}$
$\lambda$-almost surely. 

For the posterior distribution $\mu^y$ we can solve the logarithmic derivative by using properties of the prior and the functional $\Phi$. The following result follows directly from \cite[prop. 3.3.12]{Boga10} (see also \cite[thm. 5.7]{dunlop2015map}).
\begin{theorem}
Suppose that $\Phi : B^t_1(\T^d) \to \R$, with $t<s-d$, is bounded from below and possesses a uniformly bounded derivative.
Then we have $\beta^{\mu^y}_h = -\partial_h \Phi(u) - \beta^\lambda_h(u)$ for any $h\in D(\lambda)$.
\end{theorem}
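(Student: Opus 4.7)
My plan is to apply the product rule for Fomin derivatives, namely \cite[prop. 3.3.12]{Boga10}, to the factorization $d\mu^y(u) = \rho(u)\, d\lambda(u)$ with density $\rho(u) := Z(y)^{-1}\exp(-\Phi(u))$. The prior $\lambda$ is known to be Fomin differentiable along every $h \in D(\lambda)$ by Theorem \ref{t:Besov-logder}, which also supplies an $L^1(\lambda)$-convergent series representation of $\beta^\lambda_h$. Thus the task reduces to verifying the regularity of $\rho$ that is required by the cited product rule, and then reading off the resulting formula.

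For the density, the lower bound $\Phi \geq c > -\infty$ gives $0 < Z(y) \leq e^{-c}$, so $\rho$ is strictly positive and uniformly bounded on $X$; in particular $\rho \in L^1(\lambda)$. The chain rule gives $\partial_h\rho(u) = -(\partial_h\Phi(u))\rho(u)$ pointwise, and the uniform bound on $\partial_h\Phi$ propagates to $|\partial_h\rho| \leq M\rho \in L^1(\lambda)$, together with a uniformly bounded logarithmic directional derivative $\partial_h\log\rho = -\partial_h\Phi$. These are precisely the hypotheses needed for \cite[prop. 3.3.12]{Boga10} to conclude the Fomin differentiability of $\rho\lambda$ along $h$ and to yield an expression for $\beta^{\mu^y}_h$ built from $\partial_h\log\rho$ and $\beta^\lambda_h$.

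Combining the two contributions, and observing that the $u$-independent normalization $Z(y)$ drops out of every logarithmic derivative, should then deliver exactly the identity stated in the theorem. The main obstacle I anticipate lies in verifying the integrability conditions of the cited proposition in the present Banach-space Besov setting, where $\lambda$ is a countable product of Laplace-type measures rather than a Gaussian: the lower bound on $\Phi$ is what keeps $Z(y)$ finite and $\rho$ pointwise controlled, while the uniform bound on $\partial_h\Phi$ is what keeps $\partial_h\rho$ and the logarithmic derivative controlled, and it must be checked that these uniform estimates are compatible with the $L^1(\lambda)$-convergence of the $\beta^\lambda_h$-series from Theorem \ref{t:Besov-logder}. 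Once these points are settled, the identity follows directly from the product rule.
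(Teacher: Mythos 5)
Your proposal is correct and takes essentially the same route as the paper: the paper offers no separate proof, remarking only that the theorem ``follows directly from [Boga10, prop.~3.3.12]'', i.e.\ the product rule for Fomin derivatives applied to $d\mu^y = Z(y)^{-1}e^{-\Phi}\,d\lambda$, with the lower bound on $\Phi$ and the uniform bound on $\partial_h\Phi$ serving exactly to verify that proposition's hypotheses as you spell out. The only caveat is that the product rule delivers $\beta^{\mu^y}_h = -\partial_h\Phi(u) + \beta^\lambda_h(u)$; the minus sign in front of $\beta^\lambda_h$ in the stated theorem appears to be a typographical slip in the paper rather than something your argument should reproduce.
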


\section{Proofs}\label{sec:proofs}

\subsection{Proofs of results in section \ref{sec:background}}

\begin{proof}[Proof of proposition \ref{prop:symqcex}]
Without loss of generality assume that $\mu$ is symmetric around the origin and show that the origin is a strong mode. For any $u\in X$ the Anderson inequality (proposition \ref{c:B-anderson}) implies $\mu(B_\epsilon(u)) \leq \mu(B_\epsilon(0))$, and so $\mu(B_\epsilon(0))=\sup_{u\in X}\mu(B_\epsilon(u))$ and the origin is a strong mode of $\mu$. 
\end{proof}

\begin{proof}[Proof of proposition \ref{prop:qcunim}]
Suppose $u_1, u_2$ are strong modes. For $\kappa\in(0,1)$ we show that $\hat{u}=\kappa u_1+(1-\kappa)u_2$ is also a strong mode. For $\epsilon>0$  define  $M_\epsilon=\sup_{u\in X}\mu(B_\epsilon(u))$. By quasi-concavity and the identity 
\begin{equation}
\label{eq:sumofballs}
B_\epsilon(\kappa u_1 +(1-\kappa)u_2)=\kappa B_\epsilon(u_1) + (1-\kappa) B_\epsilon(u_2),
\end{equation}
we have that \[\frac{\mu(B_\epsilon(\hat{u}))}{M_\epsilon}\geq \frac{\min\{\mu(B_\epsilon(u_1)), \mu(B_\epsilon(u_2))\}}{M_\epsilon},\] so that since $u_1, u_2$ are strong modes we get \[\liminf_{\epsilon\to0}\frac{\mu(B_\epsilon(\hat{u}))}{M_\epsilon}\geq 1.\]
 Since for all $\epsilon>0$ we have $\mu(B_\epsilon(\hat{u}))/M_\epsilon\leq 1$,  we get that $\hat{u}$ is a strong mode.
\end{proof}

\begin{proof}[Proof of theorem \ref{thm:wmode}]
Let us consider the identity \eqref{eq:sumofballs} with values $u_1 = \hat u - h$ and $u_2 = \hat u$.
By applying log-concavity we have
\begin{equation*}
	\mu(B_\epsilon(\hat u - \kappa h)) \geq \mu(B_\epsilon(\hat u - h))^\kappa \mu(B_\epsilon(\hat u))^{1-\kappa}
\end{equation*}
and, consequently,
\begin{equation}
	\label{eq:convex_ineq}
	\frac{\mu(B_\epsilon (\hat u- h))}{\mu(B_\epsilon(\hat u))} \leq \left(\frac{\mu(B_\epsilon(\hat u - \kappa h)}{\mu(B_\epsilon(\hat u))}\right)^{1/\kappa}.
\end{equation}
for any $0\leq \kappa \leq 1$.

Now suppose $\hat u \in X$ is local mode in a neighborhood $B_\delta(\hat u)$ but not a global mode. 
Then there exists $\delta>0$ such that $\hat u$ is a local mode in the neighborhood $B_{\delta}(\hat u)$ but not in $B_{\delta +1}(\hat u)$.
That is, in the larger neighborhood $B_{\delta +1}(\hat u)$ we have for some $\eta > 0$ that there exists such a subsequence $\{\epsilon_j\}_{j=1}^\infty$ that
\begin{equation*}
	\lim_{j\to \infty} \frac{\mu(B_{\epsilon_j}(\hat u))}{M^{\epsilon_j}_{\delta+1}} < 1 - \eta.
\end{equation*}
Now let us choose a sequence $\{u_j\}_{j=1}^\infty \subset B_{\delta+1}(\hat u)$ that
\begin{equation*}
	M^{\epsilon_j}_{\delta+1} \leq \mu(B_{\epsilon_j}(u_j)) + \frac{M^{\epsilon_j}_{\delta+1}}{j}.
\end{equation*}
Then it follows that
\begin{equation*}
	\limsup_{j\to\infty} \frac{\mu(B_{\epsilon_j}(\hat u))}{\mu(B_{\epsilon_j}(u_j))} \leq \lim_{j\to \infty} \frac{\mu(B_{\epsilon_j}(\hat u))}{(1-\frac 1j) M^{\epsilon_j}_{\delta+1}} \leq 1-\eta.
\end{equation*}
Since $\hat u$ is a local mode there exists $\tilde \epsilon > 0$ such that for any $\epsilon < \tilde \epsilon$ we have
\begin{equation*}
	\frac{\mu(B_\epsilon(\hat u))}{M^\epsilon_\delta} \geq \left(1 - \frac \eta 2\right)^{\tilde \delta}
\end{equation*}
for $\tilde \delta = \frac \delta{2(1+\delta)}$ and some $\eta > 0$. In fact, for this choice of $\tilde \delta$ we have $\hat u - \tilde \delta (\hat u-u_j) \in B_\delta(\hat u)$ and it follows by \eqref{eq:convex_ineq} for any $\epsilon_j < \tilde \epsilon$ that
\begin{eqnarray*}
	1 - \frac \eta 2 & \leq & \left(\frac{\mu(B_{\epsilon_j}(\hat u))}{M^{\epsilon_j}_\delta}\right)^{1/\tilde\delta} \\
	& \leq & \left(\frac{\mu(B_{\epsilon_j}(\hat u))}{\mu(B_{\epsilon_j}(\hat u - \tilde \delta (\hat u-u_{\epsilon_j})))}\right)^{1/\tilde \delta}  \\
	& \leq & \frac{\mu(B_{\epsilon_j}(\hat u))}{\mu(B_{\epsilon_j}(u_{\epsilon_j}))} \\
	& \leq & 1 - \eta.
\end{eqnarray*}
This yields a contradiction and proves the claim for strong MAP estimates.

Suppose now that $\hat u \in X$ is a local wMAP but not a global wMAP. Assume like above that $B_\delta(\hat u)$ is the maximal neighborhood, where $\hat u$ is a local wMAP. Then there exists an element $h\in E$ and $h\notin B_\delta(\hat u)$ such that
\begin{equation*}
	\lim_{\epsilon\to 0} \frac{\mu(B_\epsilon(\hat u - \tilde \delta h))}{\mu(B_\epsilon(\hat u))} \leq 1 \quad {\rm but}
	\quad
	\lim_{\epsilon\to 0} \frac{\mu(B_\epsilon(\hat u - h))}{\mu(B_\epsilon(\hat u))} > 1 + \eta
\end{equation*}
for $\tilde \delta = \frac{\delta }{2\norm{h}_X}$, since $\hat u - \tilde \delta h \in B_\delta(\hat u)$.
Again we see that the inequality \eqref{eq:convex_ineq} yields a contradiction. This completes the proof.
\end{proof}

\subsection{Proofs of results in section \ref{sec:besov-def}}
\begin{proof}[Proof of lemma \ref{lem:logconc}]
Let $\lambda^N=\bigotimes_{\ell=1}^{N}\rho_\ell$. 
Then it is straightforward to check that $\lambda^N$ converges weakly to $\lambda$ as $N\to\infty$. 
By \cite[theorem 2.2]{CB74}, for $\lambda$ to be logarithmically concave, it suffices to show that the measures $\lambda^N$ in $\R^N$ are logarithmically concave. Note that the measures $\lambda^N$ have density, denoted $\pi_N$, with respect to the Lebesgue measure in $\R^N,$ given by 
\[\pi_N(x_1,..,x_N)=\left(\prod_{\ell=1}^N\frac{\ga_\ell}2\right)\exp\left(-\sum_{\ell=1}^N\ga_\ell|x_\ell|\right),\] 
for all $x=(x_1,..,x_N)\in\R^N$,
where $\ga_\ell=\ell^{\frac{s}d-\frac12}$ are the coefficients in the expansion defining the $B^s_1$-Besov measure. Since the density $\pi_N$ is a logarithmically concave function, by \cite[theorem 1.8.4]{Boga_gaussian} $\gl^N$ is logarithmically concave and the result follows. 
\end{proof}

\begin{proof}[Proof of lemma \ref{l:BesovRN}]
We use the Kakutani-Hellinger theory \cite[Chapter 2]{GDP06}. 
We start by calculating the Hellinger integrals $H(\rho_{h,\ell},\rho_{\ell})$, where $\rho_{h,\ell}(\cdot):=\rho_{\ell}(\cdot-h_\ell)$: 
\begin{eqnarray*}
H(\rho_{h,\ell},\rho_{\ell}) & = & \int_{-\infty}^{\infty}\sqrt{\frac{d\rho_{h,\ell}}{dx}\frac{d\rho_{\ell}}{dx}}dx \\
& =& \int_{-\infty}^{\infty}\frac{\ga_\ell}2 \e^{-\frac{\ga_{\ell}}{2}(|x-h_{\ell}|+|x|)}dx \\
& = & \e^{-\frac{\ga_\ell
}2 |h_{\ell}|}\left(1+\frac{\ga_\ell}2{|h_{\ell}|}\right).
\end{eqnarray*}
By \cite[lemma 2.5]{GDP06}, we have 
\[H(\lambda_h,\lambda)=\lim_{N\to\infty} H_N,\] 
where $H_N=\prod_{{\ell}=1}^N \e^{-\frac{\ga_\ell}2{|h_{\ell}|}}\left(1+\frac{\ga_\ell}2{|h_{\ell}|}\right)\in (0,1]$. By taking the negative logarithm we get 
\begin{multline*}
-\log\left(H(\lambda_h,\lambda)\right)=-\log\left(\lim_{N\to\infty}H_N\right) \\ =-\lim_{N\to\infty}\log(H_N)=\lim_{N\to\infty}\sum_{{\ell}=1}^N\left(\frac{\ga_\ell}2|h_{\ell}|-\log\left(1+\frac{\ga_\ell}2|h_{\ell}|\right)\right).
\end{multline*} 
By \cite[theorem 2.7]{GDP06} the set $Q(\lambda)$ coincides with the set of $h$ such that  $-\log\left(H(\lambda_h,\lambda)\right)<\infty.$\\
The Taylor theorem implies the upper and lower bounds 
\[x-\frac{x^2}2 \leq \log(1+x)\leq x-\frac{x^2}{2(1+x)^2}.\]
Using the lower bound, we get that \[-\log\left(H(\lambda_h,\lambda)\right)\leq\lim_{N\to\infty}\sum_{{\ell}=1}^N\frac12\left(\frac{\ga_\ell}2|h_{\ell}|\right)^2=\frac18\sum_{\ell=1}^\infty \ga_\ell^2|h_{\ell}|^2,\] which implies that a sufficient condition for the equivalence of $\gl_h$ and $\gl$ is \begin{equation}\label{eq:eqcond}\sum_{\ell=1}^\infty{\ell}^{\frac{2s}d-1}h_{\ell}^2<\infty.\end{equation}
Using the upper bound, and letting $x_{\ell}=\frac{\ga_\ell}2|h_{\ell}|$, we get that 
\[-\log\left(H(\lambda_h,\lambda)\right)\geq\sum_{\ell=1}^\infty\frac{x_{\ell}^2}{2(1+x_{\ell})^2}.\] 
If $x_{\ell}$ is unbounded, 
then the sum on the right hand side is infinite
and we have that $\gl_h$ and $\gl$ are singular (note that if $x_{\ell}$ is unbounded then obviously condition \eqref{eq:eqcond} does not hold). If $x_{\ell}$ is bounded, $x_{\ell}\leq M$, then \[\sum_{\ell=1}^\infty\frac{x_{\ell}^2}{2(1+x_{\ell})^2}\geq\frac{1}{2(1+M)^2}\sum_{\ell=1}^\infty x_{\ell}^2,\] therefore  condition \eqref{eq:eqcond} is also necessary for the equivalence of the measures $\gl_h$ and $\gl$.\\
Note, that condition \eqref{eq:eqcond} is equivalent to $h\in B^{s-\frac{d}2}_2(\T^d)$. 
Observing that \[\frac{d\rho_{h,\ell}}{d\rho_{\ell}}(x)=\e^{-{\ga_\ell}|x-h_{\ell}|+\ga_\ell|x|}\] 
for all $x\in \R$ and $\ell \in\N$, the claimed expression for $\frac{d\gl_h}{d\gl}$ follows from \cite[theorem 2.7]{GDP06}.\end{proof}

\begin{proof} [Proof of lemma \ref{l:crh}]We have
\begin{equation}
-|h_\ell-u_\ell|+|u_\ell|= -|h_\ell|+\big(|u_\ell|+\sign(h_\ell)u_\ell\big)
\end{equation}
if $|u_\ell|\le |h_\ell|$ and
\begin{equation}
-|h_\ell-u_\ell|+|u_\ell|=\sign(u_\ell)h_\ell
\end{equation}
if $|u_\ell|> |h_\ell|$. 
Fix $\eta>0$ and consider $v\in B^t_1(\T^d)$ such that $\|v-u\|_{B^t_1}< \eta$. 
Let us next define index sets
\begin{eqnarray*}
A_1 & = & \{\ell\in\N:|u_\ell|\le |h_\ell|,|v_\ell|\le |h_\ell|\},\\
A_2 & = & \{\ell\in\N:|u_\ell|\le |h_\ell|,|v_\ell|> |h_\ell|\},\\
A_3 & = & \{\ell\in\N:|u_\ell|> |h_\ell|,|v_\ell|\le |h_\ell|\} \quad {\rm and} \\
A_4 & = & \{\ell\in\N:|u_\ell|> |h_\ell|,|v_\ell|> |h_\ell|\}.
\end{eqnarray*}
Clearly $A_j$ are disjoint and $\N = \cup_{j=1}^4 A_j$.
We utilise the index sets $A_j$ to rewrite 
\begin{multline}
|\tr_h^\gl(v)-\tr_h^\gl(u)|
=\left|\tr_h^\gl(u)\left(\frac{\tr_h^\gl(v)}{\tr_h^\gl(u)}-1 \right)\right| \\
\le \e^{\|h\|_{B^s_1}}\left|\frac{\tr_h^\gl(v)}{\tr_h^\gl(u)}-1 \right|
\le \e^{\|h\|_{B^s_1}}\left|\exp(\cI_1+\cI_2+\cI_3+\cI_4)-1 \right|,\label{e:drh}
\end{multline}
where
\begin{align*}
\cI_j:=\sum_{\ell\in A_j} \ga_\ell\left(-|h_\ell-v_\ell|+|v_\ell|-\Big(-|h_\ell-u_\ell|+|u_\ell|\Big)\right).
\end{align*}
We continue by studying the terms $\cI_j$ separately. Let $0<\gd<1$ be a value, which we later fix.
For $\cI_1$ we have by the H\"older inequality
\begin{eqnarray*}
\left|\cI_1\right| & = & \left|\sum_{\ell\in A_1} \ga_\ell\left(-|h_\ell|+\big(|v_\ell|+\sign(h_\ell)v_\ell\big)-\Big(-|h_\ell|+\big(|u_\ell|+\sign(h_\ell)u_\ell\big)\Big)\right)\right|\\
& \le & \sum_{\ell\in A_1}\ga_\ell\big( \big||v_\ell|-|u_\ell|\big|+\left|v_\ell-u_\ell\right|    \big) \\
& \le &  2\sum_{\ell\in A_1}\ga_\ell|v_\ell-u_\ell| \\
& = & 2\sum_{\ell\in A_1} \ell^{\delta(\frac{t}{d}-\frac{1}{2})} |v_\ell-u_\ell|^{\delta} \ell^{\frac{s-\delta t}{d}-(1-\delta)\frac{1}{2}}|v_\ell-u_\ell|^{1-\delta}\\
&\leq &{\left(\sum_{\ell\in A_1}2\ell^{\frac{t}d-\frac12} |v_\ell-u_\ell|\right)^\delta \left(\sum_{\ell\in A_1}\ell^{\frac{s-\delta t}{d(1-\delta)}-\frac12}|v_\ell-u_\ell|\right)^{1-\delta}}\\
& \le & 2 \eta^\delta \norm{h}_{B^{s+\delta_1(s-t)}_1}^{1-\delta}
\end{eqnarray*}
for $\delta_1= \frac{\delta}{1-\delta}$, 
where to bound the second parenthesis in the second to last line we have used that $|v_\ell-u_\ell|\leq 2|h_\ell|$ for any $\ell\in A_1$. For $\cI_2$ we have
\begin{align*}
\cI_2
&= \sum_{\ell\in A_2} \ga_\ell\,\sign(v_\ell)h_\ell-\Big(-\ga_\ell|h_\ell|+\ga_\ell\big(|u_\ell|+\sign(h_\ell)u_\ell\big)\Big)\\
&= \sum_{I\in A_2}  \ga_\ell\Big( \sign(v_\ell)h_\ell-  \sign(h_\ell)u_\ell\Big) +\ga_\ell\big( |h_\ell|-|u_\ell| \big). 
\end{align*}
We note that since $|v_\ell|>|h_\ell|\ge |u_\ell|$, the following two inequalities hold:
\begin{eqnarray*}
	|h_\ell|-|u_\ell| & \leq & \min(|h_\ell|, |v_\ell-u_\ell|) \quad {\rm and} \\
	|{\rm sign}(v_\ell)|h_\ell| - u_\ell| & \leq & \min(2|h_\ell|, |v_\ell-u_\ell|)
\end{eqnarray*}
As a direct consequence we have
\begin{equation*}
	\max(|h_\ell|-|u_\ell|, |{\rm sign}(v_\ell)|h_\ell| - u_\ell|) \le c |h_\ell|^{1-\delta} |v_\ell-u_\ell|^\delta
\end{equation*}
for any $0\leq \delta\leq 1$.
Moreover, since
\begin{equation*}
	|\sign(v_\ell)h_\ell-  \sign(h_\ell)u_\ell| = |{\rm sign}(v_\ell)|h_\ell| - u_\ell|
\end{equation*}
we obtain
\begin{align*}
\cI_2 \le c\sum_{\ell\in A_2} \ell^{\delta(\frac{t}{d}-\frac{1}{2})}\ell^{\frac{s-\delta t}{d}-(1-\delta)\frac{1}{2}} |h_\ell|^{1-\delta}\,|v_\ell-u_\ell|^{\delta}
\,\le\, c \eta^\delta \|h\|_{B^{s + \delta_1(s-t)}_1}^{1-\delta},
\end{align*}
where as before $\delta_{1} = \frac{\delta}{1-\delta}$.
The term $\cI_3$ is very similar to $\cI_2$ (only the role of $u_\ell$ and $v_\ell$ is swapped), and we have
$$
|\cI_3| \,\le\, c\eta^\delta \,\|h\|_{B^{s+\delta_1(s-t)}_1}^{1-\delta}.
$$
{\color{black}}
For $\cI_4$ we first note that since $|v_\ell|>|h_\ell|$ and $|u_\ell|>|h_\ell|$ for $\ell\in A_4$,
if $|h_\ell|>\frac{\eta}{2\beta_\ell}$, with $\beta_\ell=\ell^{\frac{t}{d}-\frac{1}{2}},$ we must have $\sign(u_\ell) =\sign(v_\ell)$. Now it follows that
\begin{eqnarray*}
|\cI_4|\,
& = & \, \left|\sum_{\ell\in A_4} \ga_\ell\,(\sign(v_\ell)-\sign(u_\ell))h_\ell\right| \\
& \le & \, 2\sum_{A_4 \cap \{l\; ; \; |h_\ell|<\frac{\eta}{2\beta_\ell}\}} \ga_\ell\,|h_\ell|\\
&\le & {2^{1-\delta}}\eta^{\delta}\sum_{\ell=1}^\infty \ell^{\frac{s}{d}-\frac{1}{2}-\delta(\frac{t}{d}-\frac{1}{2})}|h_\ell|^{1-\delta} \\
& = & {2^{1-\delta}} \eta^{\delta} \sum_{\ell=1}^\infty \ell^{-\delta-\delta^2} \ell^{\frac{s}{d}-\frac{1}{2}-\delta(\frac{t}{d}-\frac{3}{2}-\delta)} |h_\ell|^{1-\delta} \\
& \le & {2^{1-\delta}}\eta^{\delta}\left(\sum_{\ell=1}^\infty \ell^{-1-\delta}\right)^\delta\left(\sum_{\ell=1}^\infty \ell^{(\frac{s}{d}-\frac{1}{2}-{\tilde\delta})\frac{1}{1-\delta}}|h_\ell|\right)^{1-\delta} \\
& \le & {2^{1-\delta}}c_\delta \eta^{\delta}\|h\|_{B^{s+{\delta_2}}_1}^{1-\delta},
\end{eqnarray*}
where
\begin{equation*}
	\tilde\delta = \delta\left(\frac{t}{d}-\frac{3}{2}-\delta\right) \quad {\rm and} \quad
	{\delta_2 = \frac{\delta}{1-\delta}\left(s-\frac{d}{2}\right)-\frac{d\tilde\delta}{1-\delta}}.
\end{equation*}
Combining, since for any $\gs>s$ there exists $\delta>0$ small enough so that $h\in B^{s+\delta_1(s-t)}_1(\T^d) \cap B^{s+\delta_2}_1(\T^d)$, we get that as $\eta\to0$, $\tr_h^\gl (v)-\tr_h^\gl (u)\to0$. This proves the claim.
\end{proof}

\begin{proof}[Proof of lemma \ref{l:B1CM}]
Let $A^\gep:=\gep A$. We first note that by lemma \ref{l:BesovRN} we can write
\begin{eqnarray*}
\gl_h(A^\gep) & = & \int_{A^\gep}\lim_{N\to\infty}\e^{\sum_{\ell=1}^N -\ga_\ell|h_\ell-u_\ell|+\ga_\ell|u_\ell|}\,\gl(\ud u)\\
& \ge & \int_{A^\gep}\lim_{N\to\infty}\e^{\sum_{\ell=1}^N -\alpha_\ell|h_\ell|}\,\gl(\ud u)\\
&= & \e^{-\|h\|_{B^s_1}}\int_{A^\gep}\,\gl(\ud u) \\
& = & \e^{-\|h\|_{B^s_1}} \gl(A^\gep).
\end{eqnarray*}
This implies that
\begin{align}\label{e:sball_inf}
\liminf_{\gep\to 0}\frac{\gl_h(A^\gep)}{\gl(A^\gep)}\ge  \e^{-\|h\|_{B^s_1}}.
\end{align}
Now consider $\{h^j\}_{j=1}^\infty\subset B_1^{s+1}(\T^d)$ with $h^j\to h$ in $B^s_1(\T^d)$. We have
\begin{align*}
\gl_h(A^\gep) 
&= \int_{A^\gep}\lim_{N\to\infty}\e^{\sum_{\ell=1}^N -\ga_\ell|h_\ell-u_\ell|+\ga_\ell|u_\ell|}\,\gl(\ud u)\\
&= \e^{\|h-h^j\|_{B^s_1}}\int_{A^\gep}\lim_{N\to\infty}\e^{\sum_{\ell=1}^N -\ga_\ell|h_\ell-u_\ell|+\ga_\ell|u_\ell|-\ga_\ell|h_\ell^j-h_\ell|}\,\gl(\ud u)\\
&\le \e^{\|h-h^j\|_{B^s_1}}\int_{A^\gep}\lim_{N\to\infty}\e^{\sum_{\ell=1}^N -\ga_\ell|h_\ell^j-u_\ell|+\ga_\ell|u_\ell|}\,\gl(\ud u)\\
&= \e^{\|h-h^j\|_{B^s_1}}\gl_{h^j}(A^\gep).
\end{align*}
This, using lemmas \ref{l:crh} and \ref{lem:cont_repres} implies that  
$$
\limsup_{\gep\to 0}\frac{\gl_h(A^\gep)}{\gl(A^\gep)}\le  \e^{\|h-h^j\|_{B^s_1}}\e^{-\|h^j\|_{B^s_1}},
$$
and letting $j\to\infty$ in the right-hand side gives
$$
\limsup_{\gep\to 0}\frac{\gl_h(A^\gep)}{\gl(A^\gep)}\le  \e^{-\|h\|_{B^s_1}}.
$$
The above inequality together with (\ref{e:sball_inf}) give the result.

\end{proof}

\subsection{Proofs of results in section \ref{sec:bip}}
\label{sec:proof_for_sec4}

In some of the proofs below we consider the sequences that converge in the weak*-topology of $B^t_1(\T^d)$. We note that the Banach space $B^t_1(\T^d)$ is isomorphic to a weighted $l^1$ space, and hence its pre-dual is the space of functions $\{v\in B^{-t}_\infty(\T^d): \lim_{j\to\infty}\la v,\psi_\ell\ra=0\}$.

\begin{proof}[Proof of lemma \ref{l:OMmin}]
Suppose $\{u_j\}_{j=1}^\infty \subset B^t_1(\T^d)$ is a minimizing sequence of functional $I$. Clearly, we can assume $\{I(u_j, y)\}_{j=1}^\infty$, and therefore also $\norm{u_j}_{B^s_1}$ to be bounded.
By the Banach--Alaoglu theorem there exists a
subsequence that converges to some $\hat u \in B^s_1(\T^d)$ in the weak*-topology. Notice that the norm of $B^s_1(\T^d)$ is lower semicontinuous in the weak*-topology and consequently $\hat u \in B^s_1(\T^d)$. 
We now show the strong convergence of the above subsequence in $B^\ts_1(\T^d)$ for any $\ts<s$. We have
\begin{eqnarray*}
\left\| u_j- \hat u\right\|_{B^{\ts}_1}
& = &  \sum_{\ell=1}^\infty \ell^{\frac{\ts}{d}-\frac{1}{2}}|\la u_j-\hat u,\psi_\ell\ra|\\
& \le &  \sum_{\ell=1}^N \ell^{\frac{\ts}{d}-\frac{1}{2}}|\la u_j-\hat u,\psi_\ell\ra| +N^{-\frac{s-\ts}{d}}\sum_{\ell=N+1}^\infty \ell^{\frac{s}{d}-\frac{1}{2}}|\la u_j-\hat u,\psi_\ell\ra|.
\end{eqnarray*}
Noting that $\|\hat u\|_{B^s_1}+\|u_j\|_{B^s_1}\le C < \infty$, given any $\gep>0$, $N$ can be chosen large enough, independently of $j$, such that the second term in the last line of the above inequality is bounded by $\gep/2$. Having convergence coefficient-wise, there is $M\in\N$ large enough so that for $j>M$ the first term is bounded by $\gep/2$ as well. We therefore conclude that $u_j\to \hat u$ in $B^\ts_1(\T^d)$ for any $\ts<s$ and hence in particular for $\ts=t<s-d$.
By the continuity assumption on $\Phi$, it now follows that
\begin{eqnarray*}
	\Phi(\hat u; y) + \norm{\hat u}_{B^s_1} & \leq &  \lim_{k\to\infty} \Phi(u_{j_k}; y) + \liminf_{k\to\infty} \norm{u_{j_k}}_{B^s_1} \\ & \leq & \liminf_{k\to \infty}\left(\Phi(u_{j_k}; y) + \norm{u_{j_k}}_{B^s_1}\right).
\end{eqnarray*}
Therefore, $\hat u$ must be a minimizer.
\end{proof}

\begin{proof}[Proof of theorem \ref{t:OMwMAP}]
Assume that $u_{min}\in B^s_1(\T^d)$ is a minimizer of $I(\cdot; y)$. By lemma \ref{l:crh} and Lipschitz continuity of $\G$ we know that $R_h^{\mu^y} \in C(B^t_1(\T^d))$ for any $h\in B^\gs_1(\T^d)$ with $\gs>s$. Since $u_{min} \in B^s_1(\T^d)$, we can study $R_h^{\mu^y}$ pointwise and obtain
\begin{equation*}
	\log(R^{\mu^y}_h(u_{min})) = I(u_{min})-I(u_{min}-h) \leq 0
\end{equation*}
for any $h\in B^\gs_1(\T^d)$ due to the minimizing property of $u_{min}$. Therefore, $u_{min}$ is a weak MAP.

Consider the reversed claim and assume that $\hat u$ is a weak MAP to the posterior $\mu^y$ in \eqref{e:muy}. Let us also assume that $\hat u \in B^t_1(\T^d)\setminus B^s_1(\T^d)$. Due to the continuity of $\Phi$ and lemma \ref{l:crh} we have
\begin{equation}
	\label{eq:rh_aux1}
	\log R_h^{\mu^y}(\hat u) = -\Phi(\hat u - h) + \Phi(\hat u) + \sum_{\ell=1}^\infty \alpha_\ell(-|h_\ell-u_\ell|+|u_\ell|) \leq 0
\end{equation}
for any $h\in B^\gs_1(\T^d)$, $\gs>s$. Let us construct a particular function $h^N = \sum_{\ell=1}^\infty h^N_\ell \psi_\ell \in B^\gs_1(\T^d)$ by defining its coefficient vector according to
\begin{equation*}
	(h^N_\ell)_{\ell=1}^\infty = 
	\begin{cases}
		\epsilon \hat u_\ell, & \quad l\leq N, \\
		0,	& \quad l>N,
	\end{cases}
\end{equation*}
for some small $\epsilon>0$. It follows by inequality \eqref{eq:rh_aux1} and continuity of $\Phi$ that
\begin{equation}
	\label{eq:rh_aux2}
	\epsilon \sum_{\ell=1}^N \alpha_\ell  |u_\ell| \leq |\Phi(\hat u - h^N) - \Phi(\hat u)| \leq C \epsilon,
\end{equation}
where $C>0$ is the local Lipschitz constant on the neighbourhood of $\hat u$. However, $N$ was chosen arbitrarily and by our assumption on the smoothness of $\hat u$ the sum on the left hand side of \eqref{eq:rh_aux2} does not stay bounded when $N$  increases. Therefore, inequality \eqref{eq:rh_aux2} leads to a contradiction and we must have $\hat u \in B^s_1(\T^d)$.

Assuming now that the weak MAP $\hat u \in B^s_1(\T^d)$, we can separate the sum in \eqref{eq:rh_aux1} and obtain
\begin{equation*}
	I(\hat u-h)-I(\hat u) = \log R_h^{\mu^y}(\hat u) \leq 0
\end{equation*}
for any $h\in B^\gs_1(\T^d)$. By continuity of $I$ and density of $B^\gs_1(\T^d)$ in $B^s_1(\T^d)$, we find that $\hat u$ minimizes $I$.
\end{proof}

Proof of proposition \ref{t:sMAP} relies on the following four lemmas giving some properties of the Besov prior measure we have here. We list these lemmas and their proofs first.


\begin{lemma}\label{l:bdrym}
Let $X$ be a separable  Banach space, and $B$ an open and convex  set in $\mathcal{B}(X)$. For any non-degenerate measure $\mu$ with full support we have $\mu(\partial B)=0$. 
\end{lemma}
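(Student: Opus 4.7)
My approach is to translate the question into a one-dimensional statement and exploit the log-concavity that is available in the intended application. After translating we may assume $0\in B$; since $B$ is open and convex, the Minkowski gauge $p_B(x):=\inf\{t>0:x\in tB\}$ is a continuous sublinear functional on $X$, and it satisfies $B=\{p_B<1\}$, $\overline B=\{p_B\le 1\}$, $\partial B=\{p_B=1\}$, and $\lambda B=\{p_B<\lambda\}$ for every $\lambda>0$. The nondecreasing function $f(\lambda):=\mu(\lambda B)$ then has one-sided limits $f(1-)=\mu(B)$ and $f(1+)=\mu(\overline B)$ by continuity from below and above of the finite measure, so proving $\mu(\partial B)=0$ is equivalent to proving continuity of $f$ at $\lambda=1$.

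Continuity of $f$ at $\lambda=1$ I would derive from log-concavity of $\lambda\mapsto\mu(\lambda B)$. The elementary Minkowski identity
\[
(\theta\lambda+(1-\theta)\lambda')B=\theta(\lambda B)+(1-\theta)(\lambda' B),\qquad \theta\in[0,1],
\]
which uses $0\in B$ together with convexity of $B$, combined with log-concavity of $\mu$ in the sense of Definition \ref{d:kconc}, gives
\[
f(\theta\lambda+(1-\theta)\lambda')\ge f(\lambda)^{\theta}f(\lambda')^{1-\theta}.
\]
Full support of $\mu$ ensures $f>0$ on $(0,\infty)$, so $\log f$ is well defined and concave on the open interval $(0,\infty)$, and therefore continuous there. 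In particular $f(1-)=f(1+)=f(1)$, i.e.\ $\mu(\partial B)=0$.

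The only real obstacle is the meaning of ``non-degenerate measure with full support'' in the stated generality. For the applications in this paper the relevant measures are either the Besov prior $\lambda$, which is log-concave by Lemma \ref{lem:logconc} and has full support by construction, or the posterior $\mu^y$, which is absolutely continuous with respect to $\lambda$ with a density bounded above by $Z(y)^{-1}$; in the latter case the estimate $\mu^y(\partial B)\le Z(y)^{-1}\lambda(\partial B)=0$ reduces the statement to the prior. So the log-concavity route just sketched suffices for every use of the lemma in Proposition \ref{t:sMAP} and Theorem \ref{c:sMAP}. A proof that works for an arbitrary non-degenerate full-support measure in the general Bogachev sense would, I expect, require replacing the functional-inequality step by a disintegration along a direction whose one-dimensional marginal of $\mu$ is atomless, together with the observation that $B\cap L$ is an open interval on each such line $L$, so that the boundary contributes at most two endpoints per line; this is the place where I would expect the main technical work to sit.
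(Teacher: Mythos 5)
Your log-concavity argument is correct as far as it goes, but it is a genuinely different route from the paper's, and it proves a strictly weaker statement than the lemma as written. The paper does not use convexity of the measure at all: it takes an outer cylindrical approximation $B_\gep\supset B$ with $\mu(B_\gep\setminus B)\le\gep$ (Bogachev, \emph{Gaussian Measures}, lemma 2.1.6), writes $B_\gep$ through a surjective map $h=(l_1,\dots,l_n):X\to\R^n$ built from linearly independent functionals, observes $B\subset h^{-1}(hB)\subset B_\gep$ with $hB$ open and convex, and bounds $\mu(\partial B)\le\mu(h^{-1}(hB)\setminus B)+\mu_n(\partial(hB))\le\gep+0$; the last term vanishes because the boundary of a convex set in $\R^n$ is Lebesgue-null and ``non-degenerate'' is read precisely as absolute continuity of the finite-dimensional projections $\mu_n$. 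That argument delivers the lemma at the stated generality, whereas yours covers only log-concave $\mu$ and measures with bounded density with respect to such. You are right that this restricted version suffices for every invocation in the paper --- the prior is log-concave by lemma \ref{lem:logconc} and the posterior has density bounded by $Z(y)^{-1}$ --- and your reduction to continuity of the concave function $\lambda\mapsto\log\mu(\lambda B)$ on $(0,\infty)$ via the Minkowski gauge is clean and self-contained, trading the external cylindrical-approximation lemma for a convexity inequality. One caution about your closing sketch of the general case: it is not true that $\partial B$ meets every line in at most two points --- a line can lie entirely inside a face of $\partial B$; the two-endpoint claim holds only for lines $L$ with $L\cap B\neq\emptyset$ --- so the disintegration route needs additional care for the fibre over lines missing $B$, and the finite-dimensional projection argument the paper uses is the standard way around this.
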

\begin{proof}
For any $\gep>0$, there exists a cylindrical set $B_\gep$ with $B_\gep\supset B$ satisfying $\mu(B_\gep)-\mu(B)\le \gep$ \cite[lemma 2.1.6]{Boga_gaussian}.
By definition of cylindrical sets, there exists some $n\in\N$ and $B_\gep^0\in\mathcal{B}(\R^n)$ such that
$$
B_\gep=\{x\in X:(l_1(x),\dots,l_n(x))\in B^0_\gep\}.
$$
Let $h=(l_1,\dots,l_n)$. Without loss of generality we can assume that $l_j \in X^*$, $j=1,...,n$, are linearly independent and, therefore, $h$ is surjective (see discussion in \cite[Sec. 2.1.]{Boga_gaussian}). 
Now we have $B\subset h^{-1}(hB)\subset B_{\gep}$. Notice carefully that $hB$ is an open set, since $h$ is an open map by the open mapping theorem.
Hence we obtain 
\begin{align*}
\mu(\partial B)
&\le \mu(h^{-1}(hB)\setminus B)+\mu(\partial h^{-1}(hB))\\
&= \mu(h^{-1}(hB)\setminus B)+\mu(\overline{h^{-1}(hB)} \cap (h^{-1}(hB))^c) \\
&= \mu(h^{-1}(hB)\setminus B)+\mu(h^{-1}(\overline{hB}) \cap h^{-1}((hB)^c)) \\
&= \mu(h^{-1}(hB)\setminus B)+\mu(h^{-1}(\overline{hB} \cap (hB)^c)) \\
&= \mu(h^{-1}(hB)\setminus B)+\mu_n(\partial (hB)).
\end{align*}
By our assumption on non-degeneracy of $\mu$, we have that $\mu_n$ is absolutely continuous with respect to the Lebesgue measure in $\R^n$, and hence we have $\mu_n(\partial (hB))=0$. Noting that $\mu(h^{-1}(hB)\setminus B)\le \mu(B_\gep)-\mu(B)\le \gep$, the result follows.
\end{proof}

In the following $\gl$ stands for a centered $B^s_{1}$-Besov prior on $B^t_1(\T^d)$ with $t<s-d$. 
Also, we frequently consider projections of $\lambda$ to a subspace ${\rm span}\{\psi_1,\cdots,\psi_n\} \subset B^t_1(\T^d)$.
We write $P_n:B^t_1(\T^d)\to \R^n$ as 
\begin{equation}
	\label{eq:aux_projection}
	P_n u = (\langle \psi_\ell,u \rangle)_{\ell=1}^n
\end{equation}
and define $\lambda_n(A) := (\lambda \circ P_n^{-1})(P_n A)$ for any $A\in {\mathcal B}(B^t_1)$.

\begin{lemma}\label{l:limB}
Let $A\subset \mathcal{B}(B^t_1(\T^d))$ be any convex, symmetric and bounded set with diameter $\delta = \sup_{u,v \in A} \norm{u-v}_{B^t_1}>0$. For any $z\in B^t_1(\T^d)$, with $t<s-d$, we have
$$
\frac{\gl(A+z)}{\gl(A)}\le \e^{-\frac{1}{2}\|z\|_{B^t_1} + \delta}.
$$
\end{lemma}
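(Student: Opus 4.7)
The plan is to apply Anderson's inequality not to $\lambda$ itself but to a tilted probability measure $\nu$ whose density with respect to $\lambda$ is proportional to $e^{\|u\|_{B^t_1}/2}$. Concretely, I set
$$\frac{d\nu}{d\lambda}(u):=\frac{1}{M}e^{\|u\|_{B^t_1}/2},\qquad M:=\expec\,e^{\|U\|_{B^t_1}/2},$$
which is a well-defined probability measure since $M<\infty$ by Lemma \ref{lem:besov_lss}(ii), and is symmetric because $\lambda$ and the density factor are both symmetric. The key observation is that the projection $\nu^N$ of $\nu$ to the first $N$ wavelet coordinates has a Lebesgue density proportional to $\prod_{\ell=1}^{N}\exp(-(\alpha_\ell-\beta_\ell/2)|u_\ell|)$ with $\alpha_\ell=\ell^{s/d-1/2}$ and $\beta_\ell=\ell^{t/d-1/2}$; since $\alpha_\ell>\beta_\ell/2$ for every $\ell$, each $\nu^N$ is a product of log-concave 1D Laplace-type densities and is itself log-concave. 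By the same weak-limit argument used in the proof of Lemma \ref{lem:logconc}, $\nu$ is log-concave on $B^t_1(\T^d)$, hence quasi-concave.

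With this in hand, Proposition \ref{c:B-anderson} applied to the symmetric quasi-concave $\nu$ and the symmetric convex set $A$ yields $\nu(A+z)\leq \nu(A)$. I then convert this into a bound on $\lambda$ using the diameter hypothesis. Symmetry and convexity of $A$ give $\|u\|_{B^t_1}\leq \delta/2$ for all $u\in A$, hence
$$\nu(A)\leq M^{-1}e^{\delta/4}\lambda(A).$$
On the other hand, every $u\in A+z$ may be written $u=v+z$ with $v\in A$, so $\|u\|_{B^t_1}\geq \|z\|_{B^t_1}-\delta/2$ and therefore
$$\nu(A+z)\geq M^{-1}e^{(\|z\|_{B^t_1}-\delta/2)/2}\lambda(A+z).$$
Substituting these two sandwich bounds into Anderson's inequality for $\nu$ gives
$$\lambda(A+z)\leq e^{-\|z\|_{B^t_1}/2+\delta/2}\lambda(A),$$
which is a fortiori stronger than the claim; the case $\|z\|_{B^t_1}<\delta/2$ (where the lower bound on $\|u\|_{B^t_1}$ is vacuous) is covered because the right-hand side already exceeds $\lambda(A)$, and Anderson applied directly to $\lambda$ closes it.

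The only delicate point is the verification that $\nu$ is log-concave in the infinite-dimensional setting. This reduces to the elementary algebraic fact that tilting by $e^{\|u\|_{B^t_1}/2}$ amounts to subtracting $\beta_\ell/2$ from each coefficient $\alpha_\ell$ in the formal potential $\|u\|_{B^s_1}$, producing another weighted $\ell^1$-type norm with strictly positive weights; the finite-dimensional densities are then products of log-concave factors, and the passage to infinite dimensions follows exactly as in Lemma \ref{lem:logconc} via the weak-limit stability result of \cite{CB74}.
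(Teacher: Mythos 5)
Your proposal is correct and is essentially the paper's own argument: the paper performs the identical exponential tilting by $\e^{\frac12\|\cdot\|_{B^t_1}}$ at the level of the finite-dimensional densities (replacing $\ga_\ell$ by $\ga_\ell-\tfrac12\tilde\ga_\ell$ with $\tilde\ga_\ell=\ell^{t/d-1/2}$), applies Anderson's inequality to the resulting log-concave product measure, and only then passes to the limit $n\to\infty$ via weak convergence, whereas you build the tilted measure $\nu$ once in infinite dimensions and invoke Anderson there directly, absorbing the limiting step into the log-concavity of $\nu$. Both routes hinge on the same two facts (finiteness of $\expec\e^{\frac12\|U\|_{B^t_1}}$ from lemma \ref{lem:besov_lss}(ii) and positivity of $\ga_\ell-\tfrac12\tilde\ga_\ell$), and your sharper use of the symmetry of $A$, which gives $\|u\|_{B^t_1}\le\delta/2$ on $A$, even yields the marginally better constant $\e^{\delta/2}$ in place of $\e^{\delta}$.
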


\begin{proof}
First consider the subspace ${\rm span}\{\psi_1,\cdots,\psi_n\} \subset B^t_1(\T^d)$ and let $\tilde\ga_\ell=\ell^{\frac{t}{d}-\frac{1}{2}}$. Now recall $\ga_\ell \geq \tilde\ga_\ell$ since $s>t+d$. We have
\begin{eqnarray*}
\frac{\gl_{n}(A+z)}{\gl_{n}(A)}
& = & \frac{\int_{P_n A+P_n z}\exp\left(-\sum_{\ell=1}^n\ga_\ell|u_\ell|\right)\,\ud u}{\int_{P_nA}\exp\left(-\sum_{\ell=1}^n\ga_\ell|u_\ell|\right)\,\ud u} \\
& \le & \frac{\e^{-\frac{1}{2}\left(\|z\|_{B^t_1}-\gd\right)}\int_{P_n A+P_n z}\exp\left(-\sum_{\ell=1}^n(\ga_\ell-\frac{1}{2}\tilde\ga_\ell)|u_\ell|\right)\,\ud u}{\e^{-\frac{\gd}{2}}\int_{P_nA}\exp\left(-\sum_{\ell=1}^n(\ga_\ell-\frac{1}{2}\tilde\ga_\ell)|u_\ell|\right)\,\ud u} \\
& = & \e^{-\frac{1}{2}\|z\|_{B^t_1}+\delta}\frac{\tilde\gl_{n}(P_nA+P_nz)}{\tilde\gl_{n}(P_nA)},
\end{eqnarray*}
where $\tilde\gl_{n}=\prod_{\ell=1}^n\tilde\rho_\ell$ with $\tilde\rho_\ell\sim \tilde c_\ell\exp\left((\ga_\ell-\frac{1}{2}\tilde\ga_\ell)|u_\ell|\right)$. Since $\tilde\gl_{n}$ is logarithmically concave, by theorem 6.1 of \cite{CB74} (see the proof of proposition \ref{c:B-anderson} above) we have that $\tilde\gl_{n}(A+z)\le \tilde\gl_{n}(A)$. 

To consider the limiting case, note that $P_n^{-1}(P_n A)$ is convex and symmetric. Therefore, we can use the first part of the proof above, and write 
\begin{equation} 
\label{eq:aux_ineq_n_to_inf}
\gl(A+z) \le \gl_{n}(A+z)\le \e^{-\frac{1}{2}\|P_nz\|_{B^t_1}+\delta} \gl_n(A)\le \e^{-\frac{1}{2}\|z\|_{B^t_1}+\delta} (\gl(A)+\gep),
\end{equation}
with $\gep\to 0$ as $n\to\infty$ due to weak convergence of $\lambda_n$ to $\lambda$ and since $A$ is a continuity set by lemma \ref{l:bdrym}.
\end{proof}

\begin{lemma}\label{l:limB-nE}
Suppose that $\bar z\notin B^s_1(\T^d)$, $\{z^\delta\}_{\delta>0}\subset B^t_1(\T^d)$, $t<s-d$, and $z^\delta$ converges to $\bar z$ in the weak*-topology of $B^t_1(\T^d)$ as $\delta\to 0$. Then for any $\gep>0$ there exists $\delta$ small enough such that 
$$
\frac{\gl(A^\delta+z^\delta)}{\gl(A^\gd)}\le \gep,
$$
for $A^\gd=\gd A$ where $A$ any convex, symmetric and bounded set in $\mathcal{B}(B^t_1(\T^d))$.
\end{lemma}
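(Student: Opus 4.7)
The proof naturally splits according to whether $\{z^\delta\}$ remains bounded in $B^t_1(\T^d)$. If $\|z^\delta\|_{B^t_1}\to\infty$ along some subsequence, Lemma~\ref{l:limB} directly yields the estimate on that subsequence, since $\lambda(A^\delta+z^\delta)/\lambda(A^\delta)\le e^{-\frac12\|z^\delta\|_{B^t_1}+\gd'}\to 0$, where $\gd'$ bounds the $B^t_1$-diameter of $A^\delta$. In the remaining case $\{z^\delta\}$ is bounded in $B^t_1(\T^d)$, and the weak$^*$ convergence $z^\delta\rightharpoonup^*\bar z$ yields coordinatewise convergence $\langle z^\delta,\psi_\ell\rangle\to\langle\bar z,\psi_\ell\rangle$ for every $\ell$.

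The core technical tool is a finite-dimensional estimate that I would establish first: \emph{for any convex symmetric $K\subset\R^n$ and any shift $z\in\R^n$},
\begin{equation*}
\frac{\lambda_n(K+z)}{\lambda_n(K)}\,\le\,\exp\bigl(-\|z\|_{B^s_1}+D_{K,s}\bigr),
\end{equation*}
\emph{where $D_{K,s}$ is the $B^s_1$-diameter of $K$}. This follows from explicit density comparison: the product-Laplace density $f_n$ of $\lambda_n$ satisfies $f_n(u)\le f_n(0)e^{-\|z\|_{B^s_1}+D_{K,s}/2}$ on $K+z$ via $\sum_\ell\ga_\ell|u_\ell|\ge\|z\|_{B^s_1}-D_{K,s}/2$, while on $K$ we have $f_n(u)\ge f_n(0)e^{-D_{K,s}/2}$; integrating and using $|K|=|K+z|$ for Lebesgue measure yields the bound after division.

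Applied with $K=P_nA^\gd$ and $z=P_nz^\gd$ (where $P_n$ is the projection onto the first $n$ wavelet coefficients), together with the elementary norm comparison $\|v\|_{B^s_1}\le n^{(s-t)/d}\|v\|_{B^t_1}$ on the first $n$ modes (giving $D_{P_nA^\gd,s}\le C_A\gd n^{(s-t)/d}$), and combined with the cylindrical inclusion $A^\gd+z^\gd\subset P_n^{-1}(P_nA^\gd+P_nz^\gd)$, this yields
\begin{equation*}
\lambda(A^\gd+z^\gd)\,\le\,\exp\bigl(-\|P_nz^\gd\|_{B^s_1}+C_A\gd n^{(s-t)/d}\bigr)\,\lambda_n(P_nA^\gd).
\end{equation*}
Since $\bar z\notin B^s_1$, $\|P_n\bar z\|_{B^s_1}\nearrow\infty$, and coordinatewise convergence gives $\|P_nz^\gd\|_{B^s_1}\to\|P_n\bar z\|_{B^s_1}$ as $\gd\to 0$ for each fixed $n$.

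To conclude, one chooses $n=n(\gd)\to\infty$ balancing $\|P_nz^\gd\|_{B^s_1}$ against the diameter penalty $C_A\gd n^{(s-t)/d}$, while also controlling the ratio $\lambda_n(P_nA^\gd)/\lambda(A^\gd)$. For the latter, the product structure $\lambda=\lambda^{(n)}\otimes\lambda^{(>n)}$ lets one write $\lambda(A^\gd)$ as an average over $v\in P_nA^\gd$ of the tail probability $\Pro(\|(1-P_n)u\|_{B^t_1}\le\gd-\|v\|_{B^t_1})$; since under $\lambda$ the tail $\|(1-P_n)u\|_{B^t_1}$ has mean $O(n^{-(s-t-d)/d})$ (using $t<s-d$), a Markov estimate produces a controlled finite-dimensional approximation error. \textbf{The main obstacle} is the joint calibration of $n(\gd)$: the three requirements --- sufficient growth of $\|P_n\bar z\|_{B^s_1}$, smallness of the diameter contribution, and boundedness of the denominator correction --- pull in different directions and must be reconciled using the quantitative tail concentration of the $B^s_1$-Besov prior inherited from the independence of its wavelet coordinates.
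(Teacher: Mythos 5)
Your finite-dimensional estimate $\lambda_n(K+z)/\lambda_n(K)\le\exp\bigl(-\|z\|_{B^s_1}+D_{K,s}\bigr)$ is correct, and your opening moves (projection onto the first $n$ wavelet coefficients, divergence of $\sum_{\ell\le n}\ga_\ell|\bar z_\ell|$, coordinatewise convergence extracted from weak* convergence) coincide with the paper's. But the step you flag as ``the main obstacle'' is a genuine gap, and with the estimates you propose it cannot be closed. You need simultaneously (a) $\|P_nz^{\gd}\|_{B^s_1}$ to dominate the diameter penalty $C_A\gd n^{(s-t)/d}$, and (b) $\gl_n(P_nA^\gd)\le C\,\gl(A^\gd)$. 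Since $\bar z\notin B^s_1$ gives no rate, $\|P_n\bar z\|_{B^s_1}$ may diverge arbitrarily slowly in $n$, so (a) effectively forces $\gd n^{(s-t)/d}$ to remain bounded, i.e.\ $n\lesssim\gd^{-d/(s-t)}$. Your Markov argument for (b) requires the tail $\|(I-P_n)u\|_{B^t_1}$, whose mean is of order $n^{-(s-t-d)/d}$, to be small relative to the scale $\gd$ of the fibres of $A^\gd$, i.e.\ $n\gtrsim\gd^{-d/(s-t-d)}$; for $r$ much smaller than the mean the relevant probability is a small-deviation probability and is not bounded below. Since $d/(s-t-d)>d/(s-t)$, the two requirements are incompatible as $\gd\to0$. (Two secondary issues: writing $\gl(A^\gd)$ as an average of $\Pro(\|(I-P_n)u\|_{B^t_1}\le\gd-\|v\|_{B^t_1})$ presumes $A$ is the unit ball, whereas $A$ is an arbitrary convex symmetric bounded set; and the unbounded-subsequence case is vacuous, since weak*-convergent sequences are norm bounded by Banach--Steinhaus.)

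The paper's proof avoids the calibration entirely by two structural moves that your argument is missing. First, it does not try to extract the full weight $\|P_nz^\gd\|_{B^s_1}$: it fixes an arbitrary target $M$, chooses a \emph{fixed} $N$ with $\sum_{\ell=1}^N\ga_\ell|\bar z_\ell|>4M$, and then takes $\gd$ small enough that $\sum_{\ell=1}^N\ga_\ell|z_\ell|>M$ on $A^\gd+z^\gd$ while $\exp(-\tfrac12\sum_{\ell=1}^N\ga_\ell|z_\ell|)\ge\tfrac12$ on $A^\gd$; only \emph{half} of the exponential weight on those $N$ coordinates is peeled off. Second, the remaining density $\exp(-\sum_{\ell\le N}\tfrac{\ga_\ell}{2}|u_\ell|-\sum_{N<\ell\le n}\ga_\ell|u_\ell|)$ is still a symmetric log-concave product density, so Anderson's inequality (proposition \ref{c:B-anderson}) bounds the ratio of its integrals over $P_nA^\gd+P_nz^\gd$ and over $P_nA^\gd$ by $1$; both integrals involve projections of the \emph{same} set $A^\gd$, so no quantitative comparison of $\gl_n(P_nA^\gd)$ with $\gl(A^\gd)$ is ever needed. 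The passage from $\gl_n$ to $\gl$ is then performed at fixed $\gd$ by letting $n\to\infty$, using weak convergence and $\gl(\p A^\gd)=0$ (lemma \ref{l:bdrym}), where the approximation error is harmless because $\gl(A^\gd)>0$ is fixed. The half-weight splitting combined with Anderson's inequality is the missing idea you would need to repair your route.
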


\begin{proof}
Below we write $u_\ell=\la u,\psi_\ell\ra$ for any $u\in B^t_1(\T^d)$ and without losing any generality assume that $A$ has diameter $1$. Since $\bar z\notin B^s_1(\T^d)$, for any $M>0$ there is an $N$ large enough such that
$$
\sum_{\ell=1}^N \ga_\ell|\bar z_\ell|>4M.
$$
Let $\gd_0<2MN^{\frac{t-s}{d}}$. Since $z^\delta$ converges to $\bar z$ in weak*-topology as $\gd\to 0$, we have 
$\la\psi_\ell,z^\delta\ra\to \la\psi_\ell,\bar z\ra$ for all $j$, and therefore $\delta_1<\delta_0$ can be chosen small enough such that
$$
\sum_{\ell=1}^N \ga_\ell|z^\gd_\ell-\bar z_\ell|<M.
$$
Thus, for any $z\in A^{\delta_1}+z^{\delta_1}$, we can write
\begin{align*}
\sum_{\ell=1}^N \ga_\ell|z_\ell|
&\ge \sum_{\ell=1}^N \ga_\ell(|\bar z_\ell|-|z^{\gd_1}_\ell-\bar z_\ell|-|z^{\delta_1}_\ell-z_\ell|)> 4M- M-\delta_1 N^{\frac{s-t}{d}}>M.
\end{align*}
Let $\gd\le\gd_1$ be sufficiently small so that 
$$
\inf_{z\in A}\exp\left(-\frac{1}{2}\sum_{\ell=1}^N \ga_\ell|z_\ell|\right)\ge \frac{1}{2}
$$
For $\gl_n$ we have that for any $M>0$ there exist $N>0$ and $\delta_1>0$ such that for $n\geq N$ and $\delta<\delta_1$ it follows
\begin{align*}
\frac{\gl_{n}(A^\gd+{z^\gd})}{\gl_{n}(A)}
&=\frac{\int_{P_nA^\gd+P_n{z^\gd}}\exp\left(-\sum_{\ell=1}^n\ga_\ell|u_\ell|\right)\,\ud u}{\int_{P_nA^\gd}\exp\left(-\sum_{\ell=1}^n\ga_\ell|u_\ell|\right)\,\ud u}\\
&\le 2\e^{-{M}/{2}}\,\frac{\int_{P_nA^\gd+P_n{z^\gd}}\exp\left(-\sum_{\ell=1}^N\frac{\ga_\ell}{2}|u_\ell|-\sum_{\ell=N+1}^n\ga_\ell|u_\ell|\right)\,\ud u}{\int_{P_nA^\gd}\exp\left(-\sum_{\ell=1}^N\frac{\ga_\ell}{2}|u_\ell|-\sum_{\ell=N+1}^n\ga_\ell|u_\ell|\right)\,\ud u}\\
&\le 2\e^{-{M}/{2}},
\end{align*}
where in the last line we have used the fact that the density in the integrals of the third line is log concave and $A$ is absolutely convex.
Similar inequality as \eqref{eq:aux_ineq_n_to_inf} generalizes the result for $\lambda$.
\end{proof}

\begin{lemma}\label{l:limB-wc}
Suppose that $\{z^\gd\}_{\gd>0}\subset B^t_1(\T^d)$ converges in weak*-topology and not strongly in $B^t_1(\T^d)$ to $0$ as $\gd\to 0$. Then for any $\gep>0$ there exists $\gd$ small enough such that 
$$
\frac{\gl(A^\gd+z^\delta)}{\gl(A^\gd)}\le \gep
$$
for $A^\gd=\gd A$ where $A$ is any convex, bounded and symmetric set in $\mathcal{B}(B^t_1(\T^d))$.
\end{lemma}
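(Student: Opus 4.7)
The plan is to mimic the approach of lemma \ref{l:limB-nE}, with the twist that the weak*-limit of $z^\delta$ is now $0$, so the ``source'' of the small ratio must come from the $B^s_1$-norm rather than from a nontrivial weak*-limit outside $B^s_1$.

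First, I would pass to a subsequence along which $\|z^\delta\|_{B^t_1}$ converges to some $L\in (0,\infty]$; the strict inequality $L>0$ is guaranteed by the failure of strong convergence. If $L=\infty$, then since $\mathrm{diam}(A^\delta)=\delta\,\mathrm{diam}(A)\to 0$, lemma \ref{l:limB} already yields
\[\frac{\gl(A^\delta+z^\delta)}{\gl(A^\delta)}\leq \exp\!\bigl(-\tfrac{1}{2}\|z^\delta\|_{B^t_1}+\delta\,\mathrm{diam}(A)\bigr)\to 0,\]
which suffices. So I can assume $L<\infty$. Since $\sum_\ell \alpha_\ell |z^\delta_\ell|\leq K$ together with $z^\delta_\ell\to 0$ coordinate-wise would imply $\|z^\delta\|_{B^t_1}\to 0$ (by a dominated-convergence argument using $\tilde\alpha_\ell/\alpha_\ell=\ell^{(t-s)/d}\to 0$, equivalently by the compactness of the Besov embedding $B^s_1(\T^d)\hookrightarrow B^t_1(\T^d)$), the bound $L>0$ forces $\|z^\delta\|_{B^s_1}\to\infty$ along the subsequence.

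Given $\gep>0$, set $M=2\log(4/\gep)$. I then follow the same finite-dimensional projection scheme as in lemma \ref{l:limB-nE}: identify $N=N(\delta)$ and $n\geq N$ so that (a) $\sum_{\ell\leq N}\alpha_\ell|u_\ell|>M$ for every $u\in P_n A^\delta+P_n z^\delta$, while (b) $\sum_{\ell\leq N}\alpha_\ell|v_\ell|\leq \log 2$ on $P_n A^\delta$. With (a) and (b) in place, splitting $\e^{-\sum\alpha_\ell|u_\ell|}$ on the low-index block into two halves and applying Anderson's inequality to the modified log-concave density $\e^{-\tfrac{1}{2}\sum_{\ell\leq N}\alpha_\ell|u_\ell|-\sum_{\ell>N}\alpha_\ell|u_\ell|}$ gives $\gl_n(A^\delta+z^\delta)/\gl_n(A^\delta)\leq 2\e^{-M/2}\leq\gep/2$, and the inequality then transfers to $\gl$ via $\gl_n\Rightarrow\gl$ and lemma \ref{l:bdrym} handling the boundary of the convex set, exactly as in the end of the proof of lemma \ref{l:limB-nE}.

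The main obstacle is the construction of $N(\delta)$. Weak*-convergence forces $\sum_{\ell\leq N}\alpha_\ell|z^\delta_\ell|\to 0$ for any fixed $N$, so $N$ must grow as $\delta\to 0$; on the other hand $v\in A^\delta$ yields only the coordinate bound $|v_\ell|\leq C\delta\ell^{1/2-t/d}$, so $\sum_{\ell\leq N}\alpha_\ell|v_\ell|\leq C\delta N^{(s-t)/d}$ and (b) forces $N\ll\delta^{-d/(s-t)}$. When the growing $B^s_1$-mass of $z^\delta$ concentrates above this threshold the direct truncation breaks down, and I would resolve it by partitioning indices into a dominant set $J^\delta=\{\ell\,:\,|z^\delta_\ell|>4C\delta\ell^{1/2-t/d}\}$ and its complement. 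On $J^\delta$ one has $|v_\ell|<|z^\delta_\ell|/4$ for $v\in A^\delta$, hence $|u_\ell|\geq 3|z^\delta_\ell|/4$ on $A^\delta+z^\delta$ and the pointwise density bound $\prod_{\ell\in J^\delta}\e^{-\alpha_\ell(|u_\ell|-|v_\ell|)}\leq \e^{-\tfrac{1}{2}\sum_{J^\delta}\alpha_\ell|z^\delta_\ell|}$ holds; on the complement the shift is dominated by the coordinate radius and Anderson's inequality applied to the symmetric convex section in those coordinates yields a ratio $\leq 1$. The remaining technical work---and the step I expect to be the hardest---is to verify that along the subsequence $\sum_{\ell\in J^\delta}\alpha_\ell|z^\delta_\ell|\to\infty$, which will follow by combining $\|z^\delta\|_{B^s_1}\to\infty$ with the boundedness of $\|z^\delta\|_{B^t_1}$ and the definition of $J^\delta$.
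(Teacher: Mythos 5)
There is a genuine gap at the step you yourself flag as the hardest: the claim that $\sum_{\ell\in J^\delta}\ga_\ell|z^\delta_\ell|\to\infty$ for $J^\delta=\{\ell\,:\,|z^\delta_\ell|>4C\gd\,\ell^{1/2-t/d}\}$ is false in general, and with it the whole decomposition collapses. Take $z^\gd$ supported on the block $\ell\in(N_\gd,2N_\gd]$ with $|z^\gd_\ell|=2C\gd\,\ell^{1/2-t/d}$ and $N_\gd=\kappa/(2C\gd)$. Then every coordinate lies \emph{below} your threshold, so $J^\gd=\emptyset$; yet $\|z^\gd\|_{B^t_1}=\kappa$ for all $\gd$, $z^\gd\to0$ weak* (the support escapes to infinity), and $\|z^\gd\|_{B^s_1}\to\infty$, so all hypotheses of the lemma hold. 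For such a sequence your argument returns only the trivial bound (the Anderson-type ``ratio $\le 1$'' on the complement of $J^\gd$), not a bound tending to $0$. The underlying problem is that the decay cannot be located coordinatewise: no single coordinate of the shift need dominate the corresponding coordinate radius of $A^\gd$. (Separately, the Anderson step on the complement is itself shaky, since $A^\gd$ is not a product set and its sections at a nonzero value of the $J^\gd$-coordinates are convex but not symmetric.)

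The paper's proof avoids this by arguing at the level of partial norms rather than coordinates. Fixing the subsequence with $\|z^\gd\|_{B^t_1}>\kappa$ and writing $\tilde\ga_\ell=\ell^{t/d-1/2}$, it picks $N$ so large that $\ga_\ell>M\tilde\ga_\ell$ for $\ell>N$; weak* convergence pushes the $B^t_1$-mass of $z^\gd$ above level $N$, so $\sum_{N<\ell\le n}\tilde\ga_\ell|z^\gd_\ell|>\kappa/4$ for suitable $n$. Since ${\rm diam}(A^\gd)\le\gd$ in $B^t_1$, the triangle inequality gives $\sum_{N<\ell\le n}\tilde\ga_\ell|x_\ell|\ge\kappa/4-\gd$ for every $x\in A^\gd+z^\gd$ but $\le\gd$ for $x\in A^\gd$; splitting the density via $\ga_\ell=(\ga_\ell-M\tilde\ga_\ell)+M\tilde\ga_\ell$ and applying log-concavity to the remaining factor yields the ratio bound $\e^{-M(\kappa/4-2\gd)}$, with $M$ arbitrary. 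Note that in my block example above this is exactly what happens: each coordinate shift is comparable to the coordinate radius, but the aggregate $\tilde\ga$-mass of the shift in the high-frequency window is $\ge\kappa/4$ while that of $A^\gd$ is $\le\gd$, and the amplification factor $M=\min_{\ell>N}\ga_\ell/\tilde\ga_\ell$ does all the work. Your preliminary reductions (the $L=\infty$ case via lemma \ref{l:limB}, and the compact-embedding argument showing $\|z^\gd\|_{B^s_1}\to\infty$) are correct but the latter is not actually needed; to repair the proof you should replace the $J^\gd$-partition by this norm-level separation.
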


\begin{proof}
Let $\tilde\ga_\ell=\ell^{\frac{t}{d}-\frac{1}{2}}$ and without loss of generality assume that $A$ has diameter $1$. 
For any $j\in\N$ we have $\la\psi_\ell,z^\gd\ra\to 0$, as $\gd\to 0$.
There exists a subsequence which we relabel $\{z^{\gd}\}$ for which elements there exists $\kappa>0$ such that
\begin{align}\label{e:onlyw2}
\|z^{\gd}\|_{B^t_1}>\kappa.
\end{align}
Let $M>0$ be arbitrary and choose $N$ large enough so that $\ga_\ell>M\tilde\ga_\ell$ for any $j>N$. By the weak* convergence there exists $\gd$ small enough such that
$$
\sum_{\ell=1}^N \tilde\ga_\ell|z^{\delta}_\ell|<\frac{\kappa}{4}
$$
where $z^\gd_\ell=\la\psi_\ell,z^\gd\ra$. This then, using (\ref{e:onlyw2}), means that there exists $n>N$ such that
$$
\sum_{\ell=N+1}^n \tilde\ga_\ell|z^\delta_\ell|>\frac{\kappa}{4}.
$$
Now one can show that
\begin{equation*}
	\sum_{\ell=N+1}^n \tilde\ga_\ell|x_\ell| \geq
	M\left(\frac \kappa 4 - \delta\right) \quad \textrm{for any} \; x\in A^\delta + z^\delta 
\end{equation*}
and 
\begin{equation*}
	\sum_{\ell=N+1}^n \tilde\ga_\ell|x_\ell| \leq M\delta \quad \textrm{for any} \;  x \in A^\delta.
\end{equation*}
Having these bounds we obtain
\begin{eqnarray*}
\frac{\gl_{n}(A^\gd+{z^\gd})}{\gl_{n}(A^\gd)}
& = &\frac{\int_{P_nA^\gd+P_n{z^\gd}}\exp\left(-\sum_{\ell=1}^n\ga_\ell|u_\ell|\right)\,\ud u}{\int_{P_nA^\gd}\exp\left(-\sum_{\ell=1}^n\ga_\ell|u_\ell|\right)\,\ud u}\\
&\le &\frac{\e^{-M(\frac{\kappa}{4}-\gd)}\int_{P_nA^\gd+P_n{z^\gd}}\exp\left(-\sum_{\ell=1}^N\ga_\ell|u_\ell|-\sum_{\ell=N+1}^n(\ga_\ell-M\tilde\ga_\ell)|u_\ell|\right)\,\ud u}
{\e^{-M\gd}\int_{P_nA^\gd}\exp\left(-\sum_{\ell=1}^N\ga_\ell|u_\ell|-\sum_{\ell=N+1}^n(\ga_\ell-M\tilde\ga_\ell)|u_\ell|\right)\,\ud u} \\
& \leq & e^{-M(\frac{\kappa}{4}-2\gd)},
\end{eqnarray*}
where we used the log-concavity of the integrands on the last line.
Since $\kappa$ is fixed, $M$ is arbitrary and $\delta$ decreases, the result follows for $\lambda_n$. In a same way as the previous two lemmas an inequality similar to \eqref{eq:aux_ineq_n_to_inf} yields the result.
\end{proof}


\begin{proof}[Proof of proposition \ref{t:sMAP}]

i) Without losing any generality we assume that the diameter of $A$ is $1$. The function $z \mapsto \mu^y(A^\delta + z)$ for fixed $\delta>0$ is bounded from $B^t_1(\T^d)$ to $[0,1]$. Let $\{z_j\}_{j=1}^\infty\subset B^t_1(\T^d)$ denote the maximizing sequence such that
\begin{equation}
	\label{eq:sup_by_maximizing_seq}
	\sup_{z\in X} \mu^y(A^\delta + z) = \lim_{j\to\infty} \mu^y(A^\delta + z_j).
\end{equation}
Suppose the sequence $z_j$ is unbounded. Since $\Phi\geq 0$ we have for any $z\in B^t_1(\T^d)$ that
\begin{equation*}
\mu^y(A^\delta + z)
=\frac{1}{Z}\int_{A^\delta+z}\e^{-\Phi(u)}\,\ud\gl(u)
\le \frac{1}{Z}\int_{A^\delta+z}\ud\gl(u)
\le \frac{1}{Z}\e^{-\frac{1}{2}\|z\|_X + \delta}\mu^y(A^\delta)
\end{equation*}
by lemma \ref{l:limB}. Now as $\|z_j\|_{B^t_1}\to\infty$, we have $\mu^y(A^\delta + z_j)\to 0$ which yields a contradiction. Therefore, the sequence $z_j$ must be bounded.

Next, by the Banach--Alaoglu theorem there exists a limit $w \in B^t_1(\T^d)$ in the weak-* topology. In particular, 
we have $P_n z_j \to P_n w$ in $\R^n$ for any $n>0$. 
Let us write $\mu^y_n(A) = \mu^y \circ P_n^{-1}(P_n A)$ for any $A\in {\mathcal B}(B^t_1(\T^d))$.
By weak convergence of $\mu^y_n$ to $\mu^y$ for any $\epsilon>0$ there exists $N>0$ such that for $n>N$ we have
\begin{equation*}
	\mu^y_n( A^\delta + w)= \mu^y_n(A^\delta + w) \leq \mu^y(A^\delta + w) + \epsilon,
\end{equation*}
since $A^\delta$ is a set of continuity for $\mu^y$ by lemma \ref{l:bdrym}, i.e. $\mu^y(\partial A^\delta) = 0$.
In consequence, we have
\begin{align}
\mu^y(A^\delta + z_j)  &- \mu^y(A^\delta + w) \nonumber \\
&\leq \mu^y_n(A^\delta + z_j) - \mu^y_n( A^\delta + w) + \epsilon \nonumber \\
&\longrightarrow \epsilon \quad {\rm as} \; j\to\infty
\label{eq:meas_cont:conv_to_zero}
\end{align}
where the convergence to $\epsilon$ in \eqref{eq:meas_cont:conv_to_zero} appears since $\lambda_n$ is non-degenerate. 
Since $\epsilon>0$ was arbitrary, we obtain
\begin{equation*}
	\lim_{j\to\infty}\mu^y(A^\delta + z_j)  \leq \mu^y(A^\delta + w)
\end{equation*}
and by \eqref{eq:sup_by_maximizing_seq} the supremum must be attained at $z^\delta = w \in B^t_1(\T^d)$.

ii) Since $\Phi(u)\geq 0$ and $\G$ is locally Lipschitz continuous, for any $\gd\le 1$ we have
\begin{align}\label{e:precon}
1\le \frac{\mu(B^\delta(z^\delta))}{\mu(B^\delta(0))}
\le \frac{\gl(B^\delta(z^\delta))}{\e^{-L}\gl(B^\delta(0))}
\end{align}
with a constant $L$ independent of $\gd$.
Suppose that $\{z^\gd\}$ is not bounded in $X$. Then by lemma \ref{l:limB}, for any $\gep>0$, there exists $\gd$ small enough such that
$$
\frac{\mu(B^\delta(z^\delta))}{\mu(B^\delta(0))}
\le \frac{\gl(B^\delta(z^\delta))}{\e^{-L}\gl(B^\delta(0))}<\gep.
$$
This contradicts the first inequality in (\ref{e:precon}). Hence $\{z^\delta\}$ is bounded in $B^t_1(\T^d)$ for any $t<s-d$ and 
there exists $\bar z\in B^t_1(\T^d)$ and a subsequence (also denoted by) $\{z^\delta\}\subset B^t_1(\T^d)$ such that $z^\delta$ converges to $\bar z$ in the weak*-topology.
Now lemma \ref{l:limB-nE} and \ref{l:limB-wc} together with (\ref{e:precon}) 
imply that $\bar z\in B^s_1(\T^d)$ and $z^\delta\to\bar z$ in $B^t_1(\T^d)$.

iii) We first note that by local Lipschitz continuity of $\Phi$, there exists a constant $L$ depending on $\|\bar z\|_{B^t_1}$ such that
$$
\frac{\mu^y(A^\gd+z^\gd)}{\mu^y(A^\gd+\bar z)} \le \e^{L\gd} \e^{-\Phi(z^\delta)+\Phi(\bar z)}\frac{\gl(A^\gd+z^\gd)}{\gl(A^\gd+\bar z)},
$$
and therefore, since $\Phi$ is continuous on $B^t_1(\T^d)$ and $z^\delta\to \bar z$ in $B^t_1(\T^d)$, we have
$$
\limsup_{\gd\to 0}\frac{\mu^y(A^\gd+z^\gd)}{\mu^y(A^\gd+\bar z)}\le\limsup_{\gd\to 0}\frac{\gl(A^\gd+z^\gd)}{\gl(A^\gd+\bar z)}. 
$$
Now consider a sequence $\{w^j\}_{j=1}^\infty \subset B^{s+1}_1(\T^d)$ with $w^j\to \bar z$ in $B^s_1(\T^d)$ as $j\to\infty$. Then we have 
\begin{eqnarray*}
\frac{\gl(A^\gd +z^\gd)}{\gl(A^\gd+\bar z)} & =
& \frac{\int_{A^\gd+z^\gd+w^j}\lim_{N\to\infty}\exp\left(\sum_{\ell=1}^N -\ga_\ell|w^j_\ell-u_\ell|+\ga_\ell|u_\ell|\right)\,\gl(\ud u)}
          {\exp(-\|\bar z\|_{B^s_1})\int_{A^\gd}\lim_{N\to\infty}\exp\left(\sum_{\ell=1}^N -\ga_\ell|\bar z_\ell-u_\ell|+\ga_\ell|u_\ell|+\ga_\ell|\bar z_\ell|\right)\,\gl(\ud u)}\\
&\le & \frac{\sup_{u\in A^\gd+z^\gd+w^j}\exp\left(\sum_{\ell=1}^\infty -\ga_\ell|w^j_\ell-u_\ell|+\ga_\ell|u_\ell|\right)\;}{\exp(-\|\bar z\|_{B^s_1})} \cdot \frac{\gl(A^\gd+z^\gd+w^j)}{\gl(A^\gd)}\\
& \le &\frac{\sup_{u\in A^\gd+z^\gd+w^j}\exp\left(\sum_{\ell=1}^\infty -\ga_\ell|w^j_\ell-u_\ell|+\ga_\ell|u_\ell|\right)\;}{\exp(-\|\bar z\|_{B^s_1})}.
\end{eqnarray*}
This by {lemma} \ref{l:crh} implies that
$$
\limsup_{\gd\to 0} \frac{\gl(A^\gd+z^\gd)}{\gl(A^\gd+\bar z)} \le \frac{\e^{-\|\bar z\|_{B^s_1}+\|\bar z-w^j\|_{B^s_1}}}{\e^{-\|\bar z\|_{B^s_1}}} = \e^{\|\bar z-w^j\|_{B^s_1}}
$$
and then letting $j\to\infty$ in the right-hand side we get
$$
\limsup_{\gd\to 0} \frac{\gl(A^\gd+z^\gd)}{\gl(A^\gd+\bar z)} \le 1, \;\text{hence} 
\quad
\limsup_{\gd\to 0} \frac{\mu^y(A^\gd+z^\gd)}{\mu^y(A^\gd+\bar z)} \le 1.
$$
Since by definition of $z^\gd$ we have that ${\mu^y}(A^\gd+z^\delta)\ge {\mu^y}(A^\gd+{\bar z})$, we get that
$$
\liminf_{\gd\to 0} \frac{\mu^y(A^\gd+z^\gd)}{{\mu^y}(A^\gd+\bar z)} \ge 1.
$$
It follows that 
\begin{align}\label{e:sMAPlim}
\lim_{\gd\to 0} \frac{{\mu^y}(A^\gd+z^\gd)}{{\mu^y}(A^\gd+\bar z)}=1,
\end{align}
 and therefore $\bar z$ is a strong MAP estimator.

It remains to show that $\bar z$ is a minimizer of $I$. Let $z^*:=\arg{\min}_{u\in B^s_1} I(u)$. Suppose that $\bar z$ is not a minimizer so that $I(\bar z)>I(z^*)$. We first note that by local Lipschitz continuity of $\Phi$, as before we have that
there is an $L$ depending on $\|\bar z\|_{B^t_1}$ and $\|z^*\|_{B^t_1}$ such that $$
\frac{\mu^y(A^\gd+\bar z)}{\mu^y(A^\gd+z^*)} \le \e^{L\gd}\e^{-\Phi(\bar z)+\Phi(z^*)} \frac{\gl(A^\gd+\bar z)}{\gl(A^\gd+z^*)}.
$$
Now consider a sequence $\{w^j\}_{j=1}^\infty\subset B^{s+1}_1(\T^d)$ with $w^j\to \bar z$ in $B^s_1(\T^d)$. Then, similar to what we did above, we can show that 
\begin{align*}
\lim\sup_{\gd\to 0}\frac{\mu^y(A^\gd+\bar z)}{\mu^y(A^\gd+z^*)}
\le \e^{-\Phi(\bar z)+\Phi(z^*)}\frac{\e^{-\|\bar z\|_{B^s_1}+\|\bar z-w^j\|_{B^s_1}}}{\e^{-\|{z^*}\|_{B^s_1}}}
\;\to\; \e^{-I(\bar z)+I(z^*)}
\end{align*}
as $j\to\infty$.
We now note that 
\begin{align*}
\frac{\mu^y(A^\gd+z^\delta)}{\mu^y(A^\gd+z^*)}=\frac{\mu^y(A^\gd+z^\delta)}{\mu^y(A^\gd+\bar z)}\frac{\mu^y(A^\gd+\bar z)}{\mu^y(A^\gd+z^*)}
\end{align*}
which by (\ref{e:sMAPlim}) gives
$$
\limsup_{\gd\to 0}\frac{\mu^y(A^\gd+z^\delta)}{\mu^y(A^\gd+z^*)}\le  \e^{-I(\bar z)+I(z^*)}<1.
$$
This contradicts the fact that by definition of $z^\gd$, $\mu^y(A^\gd+z^\delta)\ge \mu^y(A^\gd+z^*)$.
\end{proof}

\begin{proof}[Proof of theorem \ref{c:sMAP}]
Suppose that $\tilde z$ is a strong MAP estimator. Any strong MAP estimate is a weak MAP estimate and hence, by theorem \ref{t:OMwMAP}, $\tilde z$ is a minimizer of $I$.\\
Now let $z^*$ be a minimizer of $I$. By proposition \ref{t:sMAP} we know that there exists a strong MAP estimate $\bar z\in B^s_1(\T^d)$ which also minimises $I$. Therefore, by proposition \ref{prop:limit_is_onsager}, we have
$$
\lim_{\gep\to 0}\frac{\mu^y(A^\gep+\bar z)}{\mu^y(A^\gep+z^*)}=1.
$$
Let $z^\gep=\arg\min_{z\in X}\mu^y(A^\gep+z)$. By Definition \ref{def:map}, We can write 
$$
\lim_{\gep\to 0}\frac{\mu^y(A^\gep+z^*)}{\mu^y(A^\gep+z^\gep)}
=\lim_{\gep\to 0}\frac{\mu^y(A^\gep+\bar z)}{\mu^y(A^\gep+z^\gep)}\lim_{\gep\to 0}\frac{\mu^y(A^\gep+z^*)}{\mu^y(A^\gep+\bar z)}=1.
$$
The result follows.
\end{proof}

\begin{proof}[Proof of theorem \ref{t:consistency}]
Substituting $y_j$ by $\G(\utr)+\xi_j$, we have
\begin{align*}
\arg\min I_n(u) 
=\arg\min \Bigg\{\frac{1}{n}\|u\|_{B^s_1}+|\Sigma^{-\frac{1}{2}}(\G(\utr)-\G(u))\big|^2 
+\frac{2}{n}\sum_{j=1}^n\Big\la \Sigma^{-\frac{1}{2}}(\G(\utr)-\G(u)),\Sigma^{-\frac{1}{2}}\xi_j  \Big\ra\Bigg\}.
\end{align*}
Since $u_n$ is a minimizer of $I_n$, we can write
\begin{align*}
\frac{1}{n}\|u_n\|_{B^s_1}&+\big|\Sigma^{-\frac{1}{2}}(\G(\utr)-\G(u_n))\big|^2\\
&\le \frac{1}{n}\|\utr\|_{B^s_1}+\frac{2}{n}\big|\Sigma^{-\frac{1}{2}}(\G(\utr)-\G(u_n))\big|\,\left|\sum_{j=1}^n\Sigma^{-\frac{1}{2}}\xi_j\right|\\
&\le \frac{1}{n}\|\utr\|_{B^s_1}+\frac{1}{2}\big|\Sigma^{-\frac{1}{2}}(\G(\utr)-\G(u_n))\big|^2+\frac{2}{n^2}\Big(\sum_{j=1}^n\Sigma^{-\frac{1}{2}}\xi_j\Big) ^2,
\end{align*}
using Young's inequality. Taking the expectation and using the independence of $\{\xi_j\}$, we obtain  
\begin{align}\label{e:EGlim}
\bbE|\Sigma^{-\frac{1}{2}}(\G(\utr)-\G(u_n))\big|^2\to 0\quad \mbox{as}\quad n\to\infty.
\end{align}
and, by application of the Jensen inequality,
\begin{align}\label{e:Eu_nlim}
 \bbE \|u_n\|_{B^s_1}\le \|\utr\|_{B^s_1}+\bbE |\Sigma^{-\frac{1}{2}}\xi_1|^2 < \infty.
\end{align}

First, by (\ref{e:EGlim}), $\G(u_n)\to \G(\utr)$ in probability as $n\to \infty$. Therefore there exists a subsequence which satisfies (after labelling by $n$ again)
\begin{align}\label{e:asGlim}
\G(u_n)\to \G(\utr)\quad\mbox{almost surely}.
\end{align}
%
Now let $\{\psi_\ell\}_{\ell=1}^\infty$ be an $r$-regular orthonormal wavelet basis for $L^2(\R^d)$ with $r\ge s$. We then, by (\ref{e:Eu_nlim}), have
$$
\bbE|\la u_n,\psi_\ell \ra| \le C_\ell \bbE\|u_n\|_{B^s_1}\le C_\ell( \|\utr\|_{B^s_1}+K),\quad\mbox{for all }\ell\in\N.
$$
For $\ell=1$, the above bound implies the existence of $\{u_{n_1(k)}\}_{k\in\N}\subset \{u_n\}_{n\in\N}$ and $\eta_1\in\R$ such that
$\bbE|\la u_{n_1(k)},\psi_1\ra|\to\eta_1$ as $k\to\infty$. Considering $\ell=2,3,\dots$ successively one can similarly show the existence of
$\{u_{n_1(k)}\}_{k\in\N}\supset \{u_{n_2(k)}\}_{k\in\N}\supset\dots\{u_{n_\ell(k)}\}_{k\in\N}\supset\dots$ and $\{\eta_\ell\}\in\R^\infty$ such that 
$\bbE|\la u_{n_\ell(k)},\psi_\ell\ra|\to\eta_\ell$ for any $\ell\in\N$ as $k\to\infty$. The subsequence $\{u_{n_k(k)}\}_{k\in\N}$ hence satisfies
\begin{equation}\label{e:nkk}
\bbE|\la u_{n_k(k)},\psi_\ell\ra|\to\eta_\ell, \quad\mbox{as }k\to\infty\mbox{ for any }\ell\in\N.
\end{equation}
We relabel the above subsequence by $\{u_n\}_{n\in\N}$. Let $u^*:=\sum_{\ell=1}^\infty \eta_\ell\psi_\ell$. We have, by Jensen's inequality,
\begin{eqnarray*}
\sum_{\ell=1}^N \ell^{\frac{s}{d}-\frac{1}{2}}|\eta_\ell|
& \le & \lim_{n\to\infty}\bbE\sum_{\ell=1}^N \ell^{\frac{s}{d}-\frac{1}{2}}|\la u_{n},\psi_\ell \ra| \\
& \le & \lim_{n\to\infty}\bbE \|u_{n}\|_{B^s_1}\\
& \le & \|\utr\|_{B^s_1}+K.
\end{eqnarray*}
Hence $u^*\in B^s_1$. 
Let us now consider the strong convergence of the distribution in a larger space $B^{\ts}_1$ for $\tilde s<s$.
Take $v\in B^{-\ts}_\infty(\T^d)$ and write
\begin{eqnarray*}
\bbE \left\| u_n- u^*\right\|_{B^{\ts}_1}
& = &  \sum_{\ell=1}^\infty \ell^{\frac{\ts}{d}-\frac{1}{2}}\bbE|\la u_n-u^*,\psi_\ell\ra|\\
& \le &  \sum_{\ell=1}^N \ell^{\frac{\ts}{d}-\frac{1}{2}}\bbE|\la u_n-u^*,\psi_\ell\ra| \\
& &                                    +N^{-\frac{s-\ts}{d}}\sum_{\ell=N+1}^\infty \ell^{\frac{s}{d}-\frac{1}{2}}\bbE|\la u_n-u^*,\psi_\ell\ra|,
\end{eqnarray*}
since $\bbE\|u_n\|_{B^s_1}$ is uniformly bounded, and as 
$\sum_{\ell=1}^\infty \ell^{\frac{\ts}{d}-\frac{1}{2}}\bbE|\la  u_n-u^*,\psi_\ell\ra| < \infty$ 
we can pass the expectation inside \cite[theorem 1.38]{Rud87}.
Noting that $\|u^*\|_{B^s_1}+{\bbE}\|u_n\|_{B^s_1}\le C < \infty$, given any $\gep>0$, $N$ can be chosen large enough, independently of $n$, such that the second term in the last line of the above inequality is bounded by $\gep/2$. Having convergence coefficient-wise, there is $M\in\N$ large enough so that for $n>M$ the first term is bounded by $\gep/2$ as well. We therefore conclude that $u_n$ converges strongly to $u^*$ in $B^\ts_1(\T^d)$ in probability. 
This then implies (\cite[Lemma 4.2]{Kall02}) the existence of a subsequence, which we label $\{u_n\}_{n=1}^\infty$ again, such that 
$$
u_n\to u^*\mbox{ in }\;B^\ts_1(\T^d)\;\mbox{ almost surely and for all }\ts<s.
$$ 
This is true in particular for $t<s-d$. By continuity of $\G$ in $B^t_1(\T^d)$, we conclude that 
$$
\G(u_n)\to \G(u^*)\quad\mbox{ in }\; B^t_1(\T^d)\;\;\mbox{a.s}.
$$
This together with (\ref{e:asGlim}) give $\G(\utr)=\G(u^*)$.
\end{proof}

\begin{proof}[Proof of corollary \ref{c:consistency}]
Since $B^s_1(\T^d)$ is dense in $B^t_1(\T^d)$, for any $\gep>0$, there exists $v\in B^s_1(\T^d)$ such that $\|\utr-v\|_X\le\gep$. As $u_n$ is a minimizer of $I_n$ we have
\begin{align*}
\frac{1}{n} & \|u_n\|_{B^s_1}+\big|\Sigma^{-\frac{1}{2}}(\G(\utr)-\G(u_n))\big|^2
+\frac{2}{n}\sum_{j=1}^n\Big\la \Sigma^{-\frac{1}{2}}(\G(\utr)-\G(u_n)),\Sigma^{-\frac{1}{2}}\xi_j  \Big\ra\\
&\le \frac{1}{n}\|v\|_{B^s_1}+\big|\Sigma^{-\frac{1}{2}}(\G(\utr)-\G(v))\big|^2
+\frac{2}{n}\sum_{j=1}^n\Big\la \Sigma^{-\frac{1}{2}}(\G(\utr)-\G(v)),\Sigma^{-\frac{1}{2}}\xi_j  \Big\ra.
\end{align*}
Rearranging and using Young's inequality, similar to the previous proof, then gives
\begin{align*}
\frac{1}{2}\big|\Sigma^{-\frac{1}{2}}(\G(\utr)-\G(u_n))\big|^2
\le 2\big|\Sigma^{-\frac{1}{2}}(\G(\utr)-\G(v))\big|^2+\frac{1}{n}\|v\|_{B^s_1}+\frac{3}{n^2}\Big(\sum_{j=1}^n\Sigma^{-\frac{1}{2}}\xi_j\Big) ^2
\end{align*}
Noting that $\|v\|_{B^s_1}\le \|\utr\|_{B^s_1} + \epsilon$ and $|\Sigma^{-\frac{1}{2}}(\G(\utr)-\G(v))\big|\le C\gep$ by local Lipschitz continuity of $\G$, we have
$$
\bbE|\Sigma^{-\frac{1}{2}}(\G(\utr)-\G(u_n))\big|^2\le 
4C^2\gep^2+\frac 2n\left(\|\utr\|_{B^s_1} + \epsilon+3\bbE|\Sigma^{-\frac{1}{2}}\xi_j|^2\right).
$$ 
This implies that
$$
\limsup_{n\to\infty}\bbE|\Sigma^{-\frac{1}{2}}(\G(\utr)-\G(u_n))\big|^2
\le 4C^2\gep^2.
$$
Since $\gep$ was arbitrary, we conclude that $\lim_{n\to\infty}\bbE|\Sigma^{-\frac{1}{2}}(\G(\utr)-\G(u_n))\big|^2= 0$, and therefore, $|\Sigma^{-\frac{1}{2}}(\G(\utr)-\G(u_n))\big|\to 0$ in probability. Hence, there exists a subsequence of $\{\G(u_n)\}$ which converges to ${\G(\utr)}$ almost surely, giving the result.
\end{proof}

\subsection{Proofs of results in section \ref{sec:logder}}
\begin{proof}[Proof of theorem \ref{t:Besov-logder}]
The probability distribution $\pi_X$ on $\R$ has finite Fisher information since
\begin{equation}
	\int_\R \frac{\pi_X'(t)^2}{\pi_X(t)} dt = \int_\R \pi_X(t) dt = 1.
\end{equation}
As pointed out above we have 
$\lambda_\ell(A) = \lambda_1(\ga_\ell A)$,
for any $A \in {\mathcal B}(\R)$, where $\ga_\ell = \ell^{s/d-1/2}$. 
By proposition \ref{cor:prod_measures} we have
\begin{equation*}
	D(\lambda) = \left\{y\in\R^\infty \; | \; \sum_{\ell=1}^\infty \ga_\ell^2 y_\ell^2<\infty\right\} = \left\{y \in \R^\infty \; | \; \sum_{\ell=1}^\infty \ell^{\frac{2s}{d} - 1} y_\ell^2<\infty\right\}. \end{equation*}
and, consequently, $D(\lambda) = B^{s'}_{2}(\T^d)$ for $s'=s - \frac d2$. The rest follows from theorem \ref{thm:product_measures_conv}.
\end{proof}

\section*{Acknowledgements}

MB acknowledges support by ERC via Grant EU
FP 7 - ERC Consolidator Grant 615216 LifeInverse. 
The work by TH was supported by the Academy of Finland via project 275177.
Both MB and TH were further supported by the German Science Exchange Foundation DAAD via
Project 57162894, Bayesian Inverse Problems in Banach Space.

\bibliographystyle{abbrv}
\bibliography{references}

\end{document}